\documentclass[12pt,a4paper]{amsart}
\usepackage{amsmath}
\usepackage{amsthm}
\usepackage{amssymb}
\usepackage{amscd}
\usepackage{drpack}
\usepackage[english]{babel}
\usepackage{verbatim}
\usepackage[shortlabels]{enumitem}
\usepackage{tikz-cd}
\usepackage{mathrsfs}
\usepackage{hyperref}

\newtheoremstyle{mio}%
{}{} 
{\itshape}{} 
{\bfseries}{.}{ } 
{#1 #2\thmnote{~\mdseries(#3)}} 
\theoremstyle{mio}
\newtheorem{teor}{Theorem}[section]
\newtheorem{cor}[teor]{Corollary}
\newtheorem{prop}[teor]{Proposition}
\newtheorem{lemma}[teor]{Lemma}
\newtheorem{defin}[teor]{Definition}

\newtheoremstyle{definition2}%
{}{} 
{}{} 
{\bfseries}{.}{ } 
{#1 #2\thmnote{\mdseries~ #3}} 
\theoremstyle{definition2}
\newtheorem{ex}[teor]{Example}
\newtheorem{oss}[teor]{Remark}

\newcommand{\funct}{\mathcal{F}}
\newcommand{\dirsum}{\funct_s}
\newcommand{\functbound}{\funct_b}
\newcommand{\supp}{\mathrm{supp}}
\newcommand{\residuo}{\kappa}

\newcommand{\Inv}{\mathrm{Inv}}
\newcommand{\V}{\mathcal{V}}
\newcommand{\D}{\mathcal{D}}

\newcommand{\deriv}{\mathcal{D}}
\newcommand{\derivX}{\mathcal{D}_0}
\newcommand{\cb}{\mathrm{CB}}
\newcommand{\mmax}{\mathcal{M}}
\newcommand{\Kr}{\mathrm{Kr}}
\newcommand{\Specast}{\Spec^\ast}

\title[Groups of invertible ideals]{Groups of invertible ideals of one-dimensional Pr\"ufer domains as groups of integer-valued functions}
\author{Dario Spirito}
\address{Dipartimento di Scienze Matematiche, Fisiche e Informatiche, Universit\`a di Udine, Udine, Italy}
\email{dario.spirito@uniud.it}
\keywords{Free groups; integer-valued functions; Pr\"ufer domains; almost Dedekind domains; dd-domains}
\subjclass[2020]{13A15; 13F05; 13G05; 06F20; 20F60; 20K20; 54G12}

\begin{document}
\begin{abstract}
Let $G$ be a one-dimensional $\ell$-subgroup of the group $\funct(X,\insZ)$ of integer-valued functions on a set $X$. We show that $G$ is free under some hypothesis on the spectrum of $G$ and on its quotient groups at the prime ideals. We translate this result in the context of the study of freeness of the group $\Inv(D)$ of invertible ideals of a Pr\"ufer domain $D$: in particular, we introduce the class of \emph{dd-domains} as the class of Pr\"ufer domains having a set $X$ that is dense in $\Spec(D)$ (with respect to the inverse topology) and whose localizations are DVRs. This class is exactly the class of Pr\"ufer domains for which $\Inv(D)$ is isomorphic (as an $\ell$-group) to a subgroup of $\funct(X,\insZ)$.
\end{abstract}
\maketitle

\section{Introduction}
Let $X$ be a nonempty set, and let $\funct(X,\insZ)$ be the set of all functions $X\longrightarrow\insZ$. Then, a classical theorem by Baer \cite{baer-dirprodZ} states that $\funct(X,\insZ)$ is not a free group; on the other hand, the subgroup $\functbound(X,\insZ)$ of all bounded functions is free \cite{specker}. In \cite{SP-scattered} and \cite{bounded-almded}, the latter result was used to show that the group $\Inv(D)$ of invertible ideals of an almost Dedekind domain $D$ (see Section \ref{sect:InvD} below for the definition) is always free. A similar approach was used to study $\Inv(D)$ in the more general case of strongly discrete Pr\"ufer domains (see \cite{InvXD}), obtaining sufficient conditions for $\Inv(D)$ to be free.

An interesting feature of the almost Dedekind domain case is that there is a natural injective map $\Phi:\Inv(D)\longrightarrow\funct(\Max(D),\insZ)$, sending an ideal $I$ to its \emph{ideal function} $\nu_I$; in particular, if every ideal function is bounded, then $\Inv(D)$ is free. However, the image of $\Inv(D)$ is in general not contained in $\functbound(X,\insZ)$: therefore, in order to prove the freeness of $\Inv(D)$, it was necessary to set up an ascending chain of domains (connected to a descending chain of subsets of the maximal space reminiscent of the construction of the derived set of a topological space) that allowed to express $\Inv(D)$ as a smooth union of free groups such that the successive quotients are representable (non-canonically) as subgroups of $\functbound(\Max(D),\insZ)$. 

Even when not every ideal function is bounded, the image $\Phi(\Inv(D))$ of $\Inv(D)$ is not an arbitrary subgroup of $\funct(\Max(D),\insZ)$, but satisfies a ``bounded ratio'' property: if $f,g\in\Phi(\Inv(D))^+$ have the same cozero set, there is a positive integer $n$ such that $nf\geq g$ (this corresponds to the fact that the ideals corresponding to $f$ and $g$ are finitely generated ideals with the same radical). Moreover, $\Phi(\Inv(D))$ is an $\ell$-group whose spectrum is one-dimensional.

In this paper, we study $\ell$-subgroups of $\funct(X,\insZ)$ (where $X$ is an arbitrary infinite set), as a generalization of the $\ell$-groups arising from almost Dedekind domain, with particular emphasis on the one-dimensional case. In the ring case, such subgroups correspond to a class of Pr\"ufer domains that we call \emph{dd-domains}: they can be defined in a pure ring-theoretical way as those Pr\"ufer domains whose set of discrete primes is dense in the spectrum with respect to the inverse topology ($P$ is a discrete prime if $D_P$ is a DVR).

The main group-theoretical result of this paper is Theorem \ref{teor:basic}, which deals with one-dimensional $\ell$-groups $G\subseteq\funct(X,\insZ)$ with a single ``infinite'' prime (see Section \ref{setc:Specinfty} for a precise definition); we show that such a group is free under some mild topological hypothesis on the spectrum of $G$ and an algebraic hypothesis on the quotient group by the infinite prime. As infinite primes of $\ell$-groups correspond (roughly) to non-discrete primes of Pr\"ufer domains, the rings corresponding to this case are one-dimensional Pr\"ufer domains $D$ with a single non-discrete prime ideal, which is also not isolated (Theorem \ref{teor:invfree-1ptolim}).

We then generalize this result to the case where the set of infinite primes (or, in the domain case, the set of non-discrete prime ideals) is countable and none of its points is in the perfect hull of the spectrum (see Theorems \ref{teor:invfree-countptilim} and \ref{teor:grp-countptilim}). These results allow to prove that $\Inv(D)$ is free in many cases even outside the strongly discrete case considered in \cite{InvXD} (of which almost Dedekind domains are the one-dimensional case).

\section{Preliminaries and notation}
\subsection{Pr\"ufer domains}
We refer to \cite{gilmer} for facts about valuation and Pr\"ufer domains.

Let $V$ be an integral domain with quotient field $K$. Then, $V$ is a \emph{valuation domain} if, for every $x\in K$, either $x\in V$ or $x^{-1}\in V$. A valuation domain is always local, and all its finitely generated ideals are principal; all its localizations are valuation domains too.

To every valuation domain $V$ we can associate a function $v$ from $K\setminus\{0\}$ to a totally ordered abelian group $\Gamma$ (called a \emph{valuation}) such that, for every $x,y\in K\setminus \{0\}$,
\begin{itemize}
\item $v(xy)=v(x)+v(y)$;
\item $v(x+y)\geq\min\{v(x),v(y)\}$ (if $x+y\neq 0$);
\item $x\in V$ if and only if $v(x)\geq 0$.
\end{itemize}
If $v$ is surjective, then $\Gamma$ is called the \emph{value group} of $V$. Moreover, if the Krull dimension of $V$ is $1$, then the value group is always isomorphic to a subgroup of the additive group $\insR$; thus, in this case, we can consider $v$ as a function from $K\setminus\{0\}$ to $\insR$.

If $V$ is Noetherian, then $V$ is called a \emph{discrete valuation ring} (DVR) and its value group is isomorphic to $\insZ$. In this case, when $v$ is seen as a map to $\insR$, we shall normalize $v$ by setting $v(x)=1$ if $x$ generates the maximal ideal of $V$.

A domain $D$ is a \emph{Pr\"ufer domain} if it is locally a valuation domain, i.e., if $D_M$ is a valuation domain for every maximal ideal $M$. A \emph{B\'ezout domain} is an integral domain where every finitely generated ideal is principal; every B\'ezout domain is Pr\"ufer.

An \emph{almost Dedekind domain} is an integral domain such that $D_M$ is a DVR for every maximal ideal $M$. A \emph{Dedekind domain} is a Noetherian Pr\"ufer domain; equivalently, it is an almost Dedekind domain that is locally finite (i.e., each $x\in D\setminus\{0\}$ is contained in only finitely many maximal ideals).

If $D$ is a Pr\"ufer domain and $P$ is a prime ideal such that $D_P$ is a DVR, we say that $P$ is a \emph{discrete prime} of $D$.

\subsection{Invertible ideals}\label{sect:InvD}
A \emph{fractional ideal} of a domain $D$ with quotient field $K$ is a $D$-submodule $I$ of $K$ such that $dI\subseteq D$ for some $d\in D$, $d\neq 0$. A fractional ideal $I$ is \emph{invertible} if there is a fractional ideal $J$ such that $IJ=D$. The set $\Inv(D)$ of the invertible ideals of $D$ is a group under multiplication, with identity $D$.

Every invertible ideal is finitely generated. If $D$ is a Pr\"ufer domain, then the converse holds, i.e., every finitely generated ideal is invertible.

When $D$ is a B\'ezout domain, $\Inv(D)$ is naturally isomorphic to the \emph{group of divisibility} of $D$, i.e., to the quotient between the multiplicative group of the quotient field of $D$ and the group of units of $D$. When $V$ is a valuation domain, $\Inv(V)$ is naturally isomorphic to the value group of $V$.

\subsection{Free groups}
Let $G$ be an abelian group. Then, $G$ is \emph{free} if it has a basis, i.e., if there is a set $E\subset G$ such that every $g\in G$ can be written uniquely as a sum of elements in $E$. Equivalently, $G$ is free if it is isomorphic to the direct sum of a family of copies of $\insZ$.

A subgroup of a free group is always free. Moreover, free groups are projective, i.e., if 
\begin{equation*}
0\longrightarrow A\longrightarrow B\longrightarrow C\longrightarrow 0
\end{equation*}
is an exact sequence and $C$ is free, then the sequence splits.

If $G$ is a group, a \emph{smooth sequence} of subgroups of $G$ is a well-ordered ascending chain $\{H_\alpha\}_{\alpha<\lambda}$ of subgroups of $G$ such that $H_\beta=\bigcup_{\alpha<\beta}H_\alpha$ for all limit ordinals $\beta$.

The main method we shall use to prove freeness of groups is the following result:
\begin{teor}\label{teor:free-chain}
\cite[Chapter 3, Lemma 7.4]{fuchs-abeliangroups} Let $G$ be a group and $\{H_\alpha\}_{\alpha<\lambda}$ be a smooth sequence of subgroups such that $H_0=0$ and $\bigcup_{\alpha<\lambda}H_\alpha=G$. If $H_{\alpha+1}/H_\alpha$ is free for every $\alpha$, then $G$ is free.
\end{teor}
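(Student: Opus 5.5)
The natural route is transfinite induction on $\lambda$, building a basis of $G$ step by step. At each stage we extend a basis of $H_\alpha$ to a basis of $H_{\alpha+1}$ using the projectivity of free groups recalled above.

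First, for each $\alpha$ with $\alpha+1<\lambda$, consider the exact sequence
\begin{equation*}
0\longrightarrow H_\alpha\longrightarrow H_{\alpha+1}\longrightarrow H_{\alpha+1}/H_\alpha\longrightarrow 0 .
\end{equation*}
Since $H_{\alpha+1}/H_\alpha$ is free, the sequence splits. Hence there is a subgroup $C_\alpha\subseteq H_{\alpha+1}$ with $H_{\alpha+1}=H_\alpha\oplus C_\alpha$ and $C_\alpha\cong H_{\alpha+1}/H_\alpha$ free. Choose a basis $E_\alpha$ of $C_\alpha$, using the axiom of choice to pick all splittings and bases simultaneously.

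Next, set $E=\bigcup_\alpha E_\alpha$ and prove by induction on $\beta\le\lambda$ that $E^{(\beta)}:=\bigcup_{\alpha<\beta}E_\alpha$ is a basis of $H_\beta$, where $H_\lambda:=G$. The three cases are as follows.
\begin{itemize}
\item For $\beta=0$ the set is empty and $H_0=0$.
\item At a successor $\beta=\alpha+1$, the decomposition $H_{\alpha+1}=H_\alpha\oplus C_\alpha$ shows that the union of a basis of $H_\alpha$ with a basis of $C_\alpha$ is a basis of $H_{\alpha+1}$.
\item At a limit $\beta$, smoothness gives $H_\beta=\bigcup_{\alpha<\beta}H_\alpha$, so $E^{(\beta)}$ generates $H_\beta$. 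For independence, any finite relation among elements of $E^{(\beta)}$ involves only finitely many indices, hence lies in some $E^{(\gamma+1)}$ with $\gamma<\beta$, which is independent by the inductive hypothesis.
\end{itemize}
The same argument at $\beta=\lambda$ (whether $\lambda$ is a limit or a successor) gives that $E$ is a basis of $G=\bigcup_{\alpha<\lambda}H_\alpha$. So $G$ is free.

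The only delicate point is the limit step. There, smoothness is essential for the generation claim. Independence follows by the finiteness argument: any relation reduces to an earlier stage where independence is already known. Everything else is formal.
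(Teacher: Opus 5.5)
Your proof is correct and is essentially the standard argument behind the result the paper cites from Fuchs (the paper gives no proof of its own): split $H_{\alpha+1}=H_\alpha\oplus C_\alpha$, then show by transfinite induction that the union of bases of the $C_\alpha$ is a basis of $G$, with smoothness used only at limit stages. As the paper's remark following the statement notes, your argument nowhere needs the $H_\alpha$ to be pure. The only loose end is bookkeeping: when $\lambda=\mu+1$ is a successor, the set $E_\mu$ is undefined and should be taken empty, and the case $\beta=\lambda$ then reduces to $H_\mu=G$.
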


\begin{oss}
The statement of \cite[Chapter 3, Lemma 7.4]{fuchs-abeliangroups} includes the hypothesis that each $H_\alpha$ is pure in $G$, that is, that $nG\cap H_\alpha=nH_\alpha$ for every $n\inN$. This hypothesis is not necessary, as the proof does not make use of it.

\end{oss}

\subsection{$\ell$-groups}
In the following, and throughout the paper, we shall only consider abelian groups. We refer to \cite{darnel-lgroups} for terminology about $\ell$-groups.

An \emph{$\ell$-group} (short for \emph{lattice-ordered group}) is a partially ordered group that is also a lattice, i.e., where every pair of elements has an infimum and a supremum. If $G$ is an $\ell$-group with identity $0$, then $G^+:=\{h\in G\mid h>0\}$ is the \emph{positive cone} of $G$.

An \emph{ideal} of $G$ is a convex $\ell$-subgroup of $G$, that is, a subgroup $I$ of $G$ such that, if $a,b\in I$, then $a\wedge b\in I$ and if $a\leq c\leq b$ then $c\in I$. An ideal $I$ is \emph{prime} if, for every $a,b\in G^+$ such that $a\wedge b\in I$, we have $a\in I$ or $b\in I$.



If $I$ is a prime ideal of $G$, then $G/I$ is a totally ordered group; we call it the \emph{residue group} at $I$. 

The \emph{spectrum} $\Spec(G)$ of $G$ is the set of all prime ideals of $G$. The \emph{dimension} of $G$ is the supremum of the lengths of the chains in $\Spec(G)$; in particular, $G$ has dimension $1$ (i.e., it is \emph{one-dimensional}) if no two prime filters properly contained in $G$ are comparable. We denote by $\Specast(G)$ the set $\Spec(G)\setminus\{G\}$.

\subsection{Pr\"ufer domains and $\ell$-groups}\label{sect:prufer-ell}
Let $D$ be a Pr\"ufer domain. Then, the group $\Inv(D)$ of the invertible fractional ideals of $D$ is an $\ell$-group under reverse containment \cite[Theorem 2(a)]{brewer-klinger}; conversely, every lattice-ordered group is isomorphic to $\Inv(D)$ for some B\'ezout domain $D$ (see \cite[Chapter III, Theorem 5.3]{fuchs-salce} or \cite[Theorem 11.2]{anderson-feil}; note that, for B\'ezout domains, $\Inv(D)$ is isomorphic to the group of divisibility of $D$). 

Let $D$ be a B\'ezout domain. By \cite[Theorem 11.3 and subsequent discussion]{anderson-feil}, the prime ideals of $\Inv(D)$ are the sets in the form $H_P:=\{I\in\Inv(D)\mid ID_P=D_P\}$, for some nonzero prime ideal $P$ of $D$; therefore, there is a bijection
\begin{equation*}
\begin{aligned}
\Psi\colon\Spec(D)  & \longrightarrow \Spec(\Inv(D)),\\
P & \longrightarrow H_P
\end{aligned}
\end{equation*}
that reverses the ordering. In particular, $\dim(D)=\dim(\Inv(D))$.

The convex group $H_P$ can also be seen as the kernel of the extension map $\Inv(D)\longrightarrow\Inv(D_P)$, $I\mapsto ID_P$. Since this map is surjective, $\Inv(D_P)\simeq\Inv(D)/H_P$, i.e., the value group of $D_P$ is isomorphic to the residue group of $\Inv(D)$ at $H_P$.

If $D$ is Pr\"ufer, but not necessarily B\'ezout, we can always reduce to the B\'ezout case by using the Kronecker function ring construction (see \cite[Chapter 32]{gilmer} for the definition and properties): by \cite[Lemma 3.5]{HK-Olb-Re}, if $D$ is a Pr\"ufer domain the extension map $\Psi:\Inv(D)\longrightarrow\Inv(\Kr(D))$, $I\mapsto I\Kr(D)$ is an isomorphism of $\ell$-groups.

Therefore, we can always transfer problems from $\ell$-group to B\'ezout domains, and considering $\Inv(D)$ in the case of B\'ezout domains is enough to study it for arbitrary Pr\"ufer domains.

\subsection{The inverse and constructible topologies}\label{sect:topologies}
Let $A$ be a ring, and let $\Spec(A)$ be the spectrum of $A$. For every $f\in A$, let
\begin{equation*}
\V(f):=\{P\in\Spec(A)\mid f\in P\}.
\end{equation*}
Unless otherwise specified, we shall always endow $\Spec(A)$ with the \emph{inverse topology}, i.e., the topology generated by the family $\{V(f)\mid f\in A\}$; its name comes from the fact that the order it induces on $\Spec(A)$ is the opposite order of the containment (which is the order induced by the usual Zariski topology). The spectrum of $A$ is compact under the inverse topology. The subspace topology on the set $\Max(A)$ of the maximal ideals of $A$ is Hausdorff, and the sets $\V(f)\cap\Max(A)$ are clopen and compact in $\Max(A)$. See \cite[Chapter 1]{spectralspaces-libro} for a deeper study of the inverse topologies and its connection with the Zariski and the constructible topology.

When $G$ is an $\ell$-group, we endow $\Spec(G)$ with the hull-kernel topology: it is generated by the sets $\D(f):=\{P\in\Spec(G)\mid f\notin P\}$ with $f\in G^+$ (or, equivalently, with $f\in G$). If $D$ is a Pr\"ufer domain and $G=\Inv(D)$, then the bijection
\begin{equation*}
\Psi:\Spec(D)\longrightarrow\Spec(\Inv(D))
\end{equation*}
becomes a homeomorphism when the $\Spec(D)$ is endowed with the inverse topology and $\Spec(\Inv(D))$ is endowed with the hull-kernel topology: indeed, if $f\in D$ then $\Psi(\V(f))=\D(fD)$ and if $I\in\Inv(D)$ then $I=(f_1,\ldots,f_n)$ is finitely generated and $\Psi^{-1}(\D(I))=\V(f_1)\cap\cdots\cap\V(f_n)$.

If $D$ is one-dimensional, $\Max(D)=\Spec(D)\setminus\{(0)\}$, and its image under the map $\Psi$ is $\Specast(\Inv(D))=\Spec(\Inv(D))\setminus\{\Inv(D)\}$.

\subsection{Groups of functions}
Let $X$ be a set. We denote by $\funct(X,\insZ)$ the group of all functions $X\longrightarrow\insZ$; this group is an $\ell$-group under the natural pointwise ordering (i.e., $f\leq g$ if and only if $f(x)\leq g(x)$ for all $x\in X$). In particular, $(f\wedge g)(x)=\inf\{f(x),g(x)\}$ and $(f\vee g)(x)=\sup\{f(x),g(x)\}$ for all $f,g\in\funct(X,\insZ)$.

If $f\in\funct(X,\insZ)$, the \emph{support} of $f$ is
\begin{equation*}
\supp(f):=\{x\in X\mid f(x)\neq 0\}.
\end{equation*}

For every $x\in X$, we denote by $e_x$ the \emph{basis element} at $x$, i.e., the function such that
\begin{equation*}
e_x(y)=\begin{cases}
1 & \text{if~}x=y\\
0 & \text{if~}x\neq y
\end{cases}
\end{equation*}
We set $\dirsum(X,\insZ):=\langle e_x\mid x\in X\rangle$, and we usually refer to it as the \emph{direct sum} (inside the direct product $\funct(X,\insZ)=\prod_{x\in X}e_x\insZ$).

\section{Primes at infinity}\label{setc:Specinfty}
We are interested in studying the spectrum of an $\ell$-group $G\subseteq\funct(X,\insZ)$. To do so, we divide the prime ideals of $G$ into two classes.
\begin{prop}\label{prop:finite}
Let $G\subseteq\funct(X,\insZ)$ be an $\ell$-group and let $x\in X$.
\begin{enumerate}[(a)]
\item\label{prop:finite:filter} The set $P_x:=\{f\in G\mid f(x)=0\}$ is a prime ideal of $G$.
\item\label{prop:finite:residue} If $P_x\neq G$, the residue group $G/P_x$ is isomorphic to $\insZ$.
\item\label{prop:finite:dense} The set $\mathcal{P}:=\{P_x\mid x\in X, P_x\neq G\}$ is dense in $\Spec(G)$.
\end{enumerate}
\end{prop}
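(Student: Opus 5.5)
Throughout, write $\pi_x\colon G\longrightarrow\insZ$ for the evaluation map $f\mapsto f(x)$. This is a group homomorphism. Since the lattice operations in $G$ are computed pointwise (as $G$ is an $\ell$-subgroup of $\funct(X,\insZ)$), it is also a lattice homomorphism: $\pi_x(f\wedge g)=\min\{f(x),g(x)\}$.

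For part (a), $P_x=\ker\pi_x$, so $P_x$ is a subgroup, closed under $\wedge$ and $\vee$. It is convex because $f\leq h\leq g$ with $f(x)=g(x)=0$ forces $h(x)=0$. For primality, take $a,b\in G^+$ with $a\wedge b\in P_x$. Then $\min\{a(x),b(x)\}=0$, so one of $a(x),b(x)$ is $0$, and the corresponding element lies in $P_x$.

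For part (b), $G/P_x\simeq\pi_x(G)$, which is a subgroup of $\insZ$. This subgroup is nonzero exactly when $P_x\neq G$, and then it is $n\insZ\simeq\insZ$ for some $n\geq 1$. The isomorphism is also order-preserving, since the order on $G/P_x$ is induced by $G^+$ and $\pi_x$ is monotone. So the residue group is $\insZ$ even as an ordered group.

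For part (c), it suffices to show that every nonempty basic open set $\D(f)$, with $f\in G^+$, meets $\mathcal{P}$. Finite intersections of such sets are again of this form, since $\D(f)\cap\D(g)=\D(f\wedge g)$ for $f,g\geq 0$ by primality. Indeed, a prime contains $f\wedge g$ if and only if it contains $f$ or $g$, using convexity for the reverse implication.

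So suppose $\D(f)\neq\emptyset$. Then some prime does not contain $f$, hence $f\neq 0$. Since $f\geq 0$, there is $x\in X$ with $f(x)>0$. Then $f\notin P_x$, so $P_x\neq G$ and $P_x\in\mathcal{P}\cap\D(f)$.

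The only point needing care is the reduction to basic open sets $\D(f)$ with $f\geq 0$. The statement in the text says the sets $\D(f)$ generate the topology; I would justify that they in fact form a base via the identity $\D(f)\cap\D(g)=\D(f\wedge g)$ above. I do not expect a genuine obstacle: the whole argument rests on the pointwise nature of the lattice operations in $G$.
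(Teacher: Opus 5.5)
Your proof is correct and follows essentially the same route as the paper: evaluation at $x$ gives (a) and (b), and for (c) you pick $x$ with $f(x)>0$ for a nonzero $f\in G^+$ whose $\D(f)$ lies inside the given open set. You are in fact slightly more careful than the paper, which tacitly uses that the sets $\D(f)$ form a base and that $\D(f)\neq\emptyset$ forces $f\neq 0$.
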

\begin{proof}
It is straightforward to see that $P_x$ is convex; if $f\wedge g\in P_x$, then $0=(f\wedge g)(x)=\inf\{f(x),g(x)\}$ and thus one of $f(x)$ and $g(x)$ is $0$, i.e., $f\in P_x$ or $g\in P_x$. Hence $P_x$ is prime.

The ideal $P_x$ is the kernel of the evaluation map $v_x:G\longrightarrow\insZ$, $v_x(f):=f(x)$; if $P_x\neq G$, then $v_x(G)$ is not just $(0)$, and thus $G/P_x$ is isomorphic to a nonzero subgroup of $\insZ$, and thus to $\insZ$.

If now $\Omega$ is an open subset of $\Spec(G)$, then $\D(f)\subseteq\Omega$ for some $f\in G^+$. There is an $x\in X$ such that $f(x)>0$, and thus $f\notin P_x$, i.e., $P_x\in \D(f)$; hence $P_x\in\mathcal{P}\cap\D(f)\subseteq\mathcal{P}\cap\Omega$, which in particular is nonempty. Thus $\mathcal{P}$ is dense.
\end{proof}

\begin{defin}
Let $P_x$ be as in Proposition \ref{prop:finite}\ref{prop:finite:filter}. If $P_x\neq G$, we call $P_x$ the \emph{finite prime associated to $x$}, and we say that $P\in\Specast(G)$ is a \emph{finite prime} of $G$ if $P=P_x$ for some $x\in X$. 

If $P\neq G $ is not a finite prime, we say that $P$ is a \emph{prime at infinity} (or an \emph{infinite prime} of $G$). We denote by $\Spec^\infty(G)$ the set of primes at infinity of $G$.
\end{defin}

\begin{oss}\label{oss:separaz-spec}
Being a prime at infinity is not an inherent algebraic property of $P$, but rather it depends from its realization as a subgroup of $\funct(X,\insZ)$. For example, let $X$ be an infinite set and let $G$ be the group generated by all the $e_x$ and by the function $\mathbf{1}_X$ that is constantly equal to $1$; then, $G$ is a free group with basis $\{e_x,\mathbf{1}_X\mid x\in X\}$. The set $P:=\langle e_x\mid x\in X\rangle$ is a prime ideal of $G$, and is a prime at infinity of $G$ since $P\neq P_x$ for every $x\in X$.

Let now $\overline{x}$ be an element not in $X$, let $X':=X\cup\{\overline{x}\}$ and consider the map
\begin{equation*}
\begin{aligned}
\phi\colon G & \longrightarrow \funct(X',\insZ),\\
e_x& \longmapsto e_x,\\
\mathbf{1}_X& \longmapsto \mathbf{1}_{X'}.
\end{aligned}
\end{equation*}
Then, $\phi$ is an $\ell$-group isomorphism between $G$ and $\phi(G)$; however, the image of $P$ is a finite prime, namely $\phi(P)=P_{\overline{x}}$. Thus $\Spec^\infty(\phi(G))=\emptyset$.
\end{oss}

The following lemma is trivial, but we spell it explicitly for clarity.
\begin{lemma}
Let $G\subseteq\funct(X,\insZ)$ be an $\ell$-group, $f\in G^+$, let $x\in X$. Then, $x\in\supp(f)$ if and only if $P_x\in\D(f)$.
\end{lemma}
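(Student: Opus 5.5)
The plan is simply to unwind the definitions on both sides and check that they coincide. Since $f\in G^+$, we have $f(y)\geq 0$ for every $y\in X$. By definition of support, $x\in\supp(f)$ means $f(x)\neq 0$, which for a positive function is equivalent to $f(x)>0$. On the other hand, by the definition of the hull-kernel topology, $P_x\in\D(f)$ means $f\notin P_x$. By Proposition \ref{prop:finite}\ref{prop:finite:filter}, $P_x=\{g\in G\mid g(x)=0\}$, so this in turn means $f(x)\neq 0$.

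Chaining these equivalences gives
\begin{equation*}
x\in\supp(f)\iff f(x)\neq 0\iff f\notin P_x\iff P_x\in\D(f).
\end{equation*}
Positivity of $f$ is not actually needed for this chain; it only guarantees that $\D(f)$ is one of the basic open sets used to define the topology.

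One point deserves a remark. If $P_x=G$, then $P_x$ is not a proper prime, but the statement still makes sense, since $G\in\Spec(G)$. In that case every $f\in G$ vanishes at $x$, so $x\notin\supp(f)$ and also $f\in P_x$, that is, $P_x\notin\D(f)$. Hence both sides are false together and the equivalence holds.

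There is no real obstacle here. The only step needing care is this degenerate case $P_x=G$, and it is handled by the same chain of equivalences.
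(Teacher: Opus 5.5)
Your proof is correct and is exactly the definitional unwinding $x\in\supp(f)\iff f(x)\neq 0\iff f\notin P_x\iff P_x\in\D(f)$ that the paper has in mind when it calls the lemma trivial and gives no proof. Your side remarks also hold: positivity of $f$ is not needed, and the degenerate case $P_x=G$ is covered by the same chain.
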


\section{dd-domains}
As explained in Section \ref{sect:prufer-ell}, $\ell$-groups are exactly the structures that arise as groups of invertible ideals of a Pr\"ufer domain. In this section, we study what happens when we restrict to subgroups of a group $\funct(X,\insZ)$ of integer-valued functions.

\begin{defin}
Let $D$ be a Pr\"ufer domain. We say that a subset $X\subseteq\Spec(D)$ is a \emph{discrete core} for $D$ if $X$ is dense in $\Spec(D)$ (with respect to the inverse topology) and $D_P$ is a DVR for every $P\in X$.

We say that $D$ is a \emph{discretely dense domain} (for short, a \emph{dd-domain}) if it has a discrete core.
\end{defin}

An equivalent definition is that $D$ is a dd-domain if the set of discrete prime ideals of $D$ is dense, with respect to the inverse topology. However, a dd-domain can have more than one discrete core (see Example \ref{ex:dd}\ref{ex:dd:ad2}), and the definition above allows a greater degree of flexibility.


Let $D$ be a dd-domain, and let $X$ be a discrete core of $D$. To every fractional ideal $I$ we can associate an \emph{ideal function} (relative to $X$) defined by
\begin{equation*}
\begin{aligned}
\nu_I\colon X &\longrightarrow \insZ,\\
P & \longmapsto v_P(I)
\end{aligned}
\end{equation*}
where $v_P$ is the valuation relative to $D_P$.

\begin{teor}\label{teor:dd-XZ}
Let $D$ be a Pr\"ufer domain.
\begin{enumerate}[(a)]
\item\label{teor:dd-XZ:dd->} If $D$ is a dd-domain and $X$ is a discrete core, the map 
\begin{equation*}
\begin{aligned}
\Phi\colon \Inv(D) &\longrightarrow \funct(X,\insZ),\\
I & \longmapsto \nu_I,
\end{aligned}
\end{equation*}
is an injective $\ell$-group homomorphism. In particular, $\Inv(D)$ is isomorphic (as an $\ell$-group) to a subgroup of $\funct(X,\insZ)$.
\item\label{teor:dd-XZ:->dd} Let $X$ be any set and $G\subseteq\funct(X,\insZ)$ be an $\ell$-group. If $\Inv(D)\simeq G$, then $D$ is a dd-domain.
\end{enumerate}
\end{teor}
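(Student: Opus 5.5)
For part (a), I will first check that $\Phi$ is a group homomorphism. For each $P\in X$ the map $I\mapsto ID_P$ is a homomorphism $\Inv(D)\to\Inv(D_P)$. Composing it with the normalized valuation $v_P\colon\Inv(D_P)\simeq\insZ$ gives exactly $I\mapsto\nu_I(P)$, and so $\nu_{IJ}=\nu_I+\nu_J$.

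Next I will show that $\Phi$ preserves the lattice operations. In $\Inv(D)$, ordered by reverse containment, the infimum of $I$ and $J$ is $I+J$ and the supremum is $I\cap J$; both are invertible because $D$ is Prüfer. Localization commutes with finite sums and intersections, so in the valuation ring $D_P$ we get $v_P((I+J)D_P)=\min\{v_P(I),v_P(J)\}$, and similarly the maximum for $I\cap J$. Hence $\nu_{I\wedge J}=\nu_I\wedge\nu_J$ pointwise.

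For injectivity it suffices to show that the kernel is trivial. Since $\Phi$ is an $\ell$-homomorphism, $\nu_I=0$ forces $\nu_{I\vee 0}=0$ and $\nu_{I\wedge 0}=0$, so I may reduce to $I\supseteq D$, that is, to a positive element. Put $J=I^{-1}\subseteq D$; it is an invertible integral ideal with $JD_P=D_P$ for all $P\in X$. If $J\neq D$, write $J=(f_1,\dots,f_n)$. The set $\V(f_1)\cap\dots\cap\V(f_n)=\V(J)$ is open in the inverse topology and nonempty, since it contains a maximal ideal above $J$. By density of $X$ there is some $P\in X$ containing $J$, and then $v_P(J)>0$, a contradiction. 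This density step is the key use of the hypothesis.

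For part (b), I will transfer along the homeomorphism $\Psi$ of Section \ref{sect:topologies}. First I reduce to the Bézout case: by \cite[Lemma 3.5]{HK-Olb-Re}, $\Inv(D)\simeq\Inv(\Kr(D))$. I also need to check that $\Spec(D)\simeq\Spec(\Kr(D))$, compatibly with localizations. Kronecker function rings give a bijection of primes $P\mapsto P\Kr(D)$ under which $\Kr(D)_{P\Kr(D)}=D_P(X)$, which is a DVR if and only if $D_P$ is. I think the cleanest route is to argue directly with $\ell$-group primes. Alternatively one can use the general fact that the primes of $\Inv(D)$ correspond to the primes of $D$ via $H_P=\ker(\Inv(D)\to\Inv(D_P))$ for any Prüfer domain, and this also follows from the Kronecker isomorphism. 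Either way, I will take $D$ to be Bézout.

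Given an $\ell$-isomorphism $\Inv(D)\simeq G\subseteq\funct(X,\insZ)$, Proposition \ref{prop:finite} says that the finite primes $P_x$ form a dense subset of $\Spec(G)$ and have residue groups $\insZ$. Pulling back through $\Psi$, the primes $Q$ with $H_Q$ corresponding to some $P_x$ form a dense subset of $\Spec(D)$ in the inverse topology. For each such $Q$ the value group $\Inv(D_Q)\simeq\Inv(D)/H_Q\simeq\insZ$, so $D_Q$ is a DVR. This yields a discrete core. One point needs care: Proposition \ref{prop:finite}(c) is density in the hull–kernel topology, which includes the point $G$ itself. That point corresponds to the prime $(0)$, but density there is automatic and $\mathcal P$ excludes it, so no issue arises.

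The main obstacle I expect is the non-Bézout reduction in (b). Specifically, I must verify that the correspondence between $\Spec(D)$ and $\Spec(\Inv(D))$, with the correct topology and with residue groups equal to value groups of localizations, survives passage to $\Kr(D)$.
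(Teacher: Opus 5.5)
Your proof is correct. Part (b) follows the paper's argument. Finite primes have residue group $\insZ$ and are dense, and this is transported to $\Spec(D)$ through $\Psi$. You also make the Kronecker-function-ring reduction explicit, whereas the paper leaves it to the general remarks of Section~\ref{sect:prufer-ell}.

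The genuine difference is in the injectivity argument of (a).

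\textbf{The paper's route.} From $\nu_I=0$ it first obtains $I\subseteq D$ by citing Olberding's theorem that $\bigcap_{P\in X}D_P=D$ for an inverse-dense $X$, together with the Dobbs--Fedder--Fontana homeomorphism. Only then does it use density on the open set $\V(I)$.

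\textbf{Your route.} You use the already-established $\ell$-homomorphism property to split $I$ into $I\vee D=I\cap D$ and $I\wedge D=I+D$. This means only integral invertible ideals need to be treated. A single density argument on $\V(J)=\V(f_1)\cap\dots\cap\V(f_n)$ then suffices.

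\textbf{Comparison.} Your version is more self-contained. Applied to principal ideals $xD$, it actually recovers $\bigcap_{P\in X}D_P=D$ for Pr\"ufer $D$ rather than assuming it. The paper's version is shorter, given the citation.

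One small slip: under reverse containment, the positive elements of $\Inv(D)$ are the integral ideals $I\subseteq D$, not those with $I\supseteq D$. This is harmless, since you pass to $J=I^{-1}$ and the two halves of the decomposition are treated symmetrically.
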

\begin{proof}
\ref{teor:dd-XZ:dd->} Since $v_P$ is a valuation, for every pair of fractional ideals $I,J$ we have $v_P(IJ)=v_P(I)+v_P(J)$. Hence $\Phi$ is a group homomorphism. Moreover, the order structure of $\Inv(D)$ is the reverse containment, and thus $I\wedge J=I+J$. Hence,
\begin{equation*}
v_P(I\wedge J)=v_P(I+J)=\min(v_P(I),v_P(J))
\end{equation*}
and so $\Phi(I\wedge J)=\Phi(I)\wedge\Phi(J)$. Hence $\Phi$ is a homomorphism of $\ell$-groups.

We show that it is injective: suppose that $\Phi(I)$ is the zero map. Then, $\nu_I(P)=v_P(I)=0$ for all $P\in X$. Thus,
\begin{equation*}
I\subseteq\bigcap_{P\in X}ID_P=\bigcap_{P\in X}D_P=D,
\end{equation*}
with the last equality following by the density of $X$ \cite[Theorem 5.6(a)]{olberding_affineschemes} (note that since $D$ is Pr\"ufer the localization map establishes a homeomorphism between the spectrum and Zariski space of $D$, in both the Zariski and the inverse topology \cite[Lemma 2.4]{dobbs_fedder_fontana}). Moreover, $\V(I)\cap X=\emptyset$. Since $I$ is invertible, it is finitely generated, and thus $\V(I)$ is an open set of $\Spec(D)$; since $X$ is dense, we must have $\V(I)=\emptyset$, i.e., $I=D$. Thus $\Phi$ is injective.

\ref{teor:dd-XZ:->dd} If $P_x$ is a finite prime of $G$, then by Proposition \ref{prop:finite}\ref{prop:finite:residue} the residue group $G/P_x$ is isomorphic to $\insZ$. Thus, the corresponding prime ideal $Q_x$ of $D$ is such that the localization $D_{Q_x}$ is a DVR.

Moreover, by Proposition \ref{prop:finite}\ref{prop:finite:dense} $\mathcal{P}:=\{P_x\mid x\in X\}$ is dense in $\Spec(G)$, and thus the corresponding set of prime ideals of $D$ is dense in $\Spec(D)$. Thus $\mathcal{P}$ is a discrete core, and $D$ is a dd-domain.
\end{proof}

The result of the previous theorem can also be expressed in a different way:
\begin{cor}
Let $D$ be a Pr\"ufer domain. Then, $D$ is a dd-domain if and only if $\Inv(D)$ is isomorphic as an $\ell$-group to a subgroup of $\funct(X,\insZ)$ for some nonempty set $X$.
\end{cor}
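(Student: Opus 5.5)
The corollary is a reformulation of Theorem \ref{teor:dd-XZ}, so the plan is to prove each implication by invoking one part of that theorem.

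For the forward implication, let $D$ be a dd-domain and choose a discrete core $X\subseteq\Spec(D)$. We have to check that $X$ is nonempty. Since $X$ is dense in $\Spec(D)$ and $\Spec(D)$ is nonempty (it contains $(0)$), $X$ cannot be empty: the closure of the empty set is empty. By Theorem \ref{teor:dd-XZ}\ref{teor:dd-XZ:dd->}, the map $\Phi\colon I\mapsto\nu_I$ is an injective $\ell$-group homomorphism $\Inv(D)\longrightarrow\funct(X,\insZ)$. Its image $\Phi(\Inv(D))$ is therefore an $\ell$-subgroup of $\funct(X,\insZ)$, and $\Phi$ is an $\ell$-isomorphism of $\Inv(D)$ onto it. This gives the required realization.

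For the reverse implication, suppose $\Inv(D)\simeq G$ as $\ell$-groups, where $G\subseteq\funct(X,\insZ)$ for some nonempty $X$. The phrase ``isomorphic as an $\ell$-group to a subgroup'' has to be read so that the lattice operations of $G$ are the pointwise ones. This is automatic when the isomorphism is with an $\ell$-subgroup, i.e.\ a subgroup closed under pointwise $\wedge$ and $\vee$. That is exactly the hypothesis of Theorem \ref{teor:dd-XZ}\ref{teor:dd-XZ:->dd}, which then says that $D$ is a dd-domain.

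The only point needing care is this reading of ``subgroup'' as ``$\ell$-subgroup''. The proof of part \ref{teor:dd-XZ:->dd} relies on Proposition \ref{prop:finite}, and that proposition uses that $\wedge$ in $G$ is the pointwise infimum: it is needed to show that the sets $P_x$ are prime ideals. I would therefore state explicitly that the subgroup is taken as an $\ell$-subgroup of $\funct(X,\insZ)$, as in the rest of the paper. I would also note that the forward direction supplies exactly this, because $\Phi$ preserves $\wedge$ and $\vee$. With that interpretation fixed, both implications follow directly from Theorem \ref{teor:dd-XZ}, and no further obstacle is expected.
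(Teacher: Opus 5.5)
Your route is the paper's: the corollary is just parts \ref{teor:dd-XZ:dd->} and \ref{teor:dd-XZ:->dd} of Theorem~\ref{teor:dd-XZ} read together. Both of your extra checks are sound. A discrete core is nonempty because it is dense in the nonempty space $\Spec(D)$. And ``subgroup'' must mean a subgroup closed under the pointwise lattice operations, since that is what makes the $P_x$ prime in Proposition~\ref{prop:finite}.

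Your claim that ``no further obstacle is expected'' misses one degenerate case, which the paper misses too: $D$ a field, equivalently $G=0$.

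\emph{Counterexample.} A field $K$ is a Pr\"ufer domain, and $\Inv(K)$ is trivial. So $\Inv(K)$ is $\ell$-isomorphic to the zero subgroup of $\funct(X,\insZ)$ for any nonempty $X$. But $K$ is not a dd-domain, under the usual convention (implicit in the paper, whose DVRs have value group $\insZ$). Indeed $K_{(0)}=K$ is not a DVR, so the only candidate discrete core is $\emptyset$, and $\emptyset$ is not dense in $\Spec(K)=\{(0)\}$.

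\emph{Where your argument breaks.} The failing step is Proposition~\ref{prop:finite}\ref{prop:finite:dense}, on which part \ref{teor:dd-XZ:->dd} rests. When $G=0$, every $P_x$ equals $G$, so $\mathcal{P}=\emptyset$. Meanwhile $\Spec(G)=\{G\}$ is a nonempty open set. The proof's choice of some $f\in G^+$ with $\D(f)\subseteq\Omega$ is impossible here, because $G^+=\emptyset$.

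\emph{Fix.} The reverse implication needs the extra hypothesis that $D$ is not a field. Equivalently, the subgroup is nonzero: a nonzero nonunit $x$ gives $xD\neq D$ in $\Inv(D)$. With that hypothesis your argument goes through as written. A nonempty open subset of $\Spec(G)$ either contains a nonempty $\D(f)$ with $f\in G^+$, or equals $\Spec(G)$. In the second case it contains $\D(g\vee(-g))\neq\emptyset$ for any $0\neq g\in G$. Either way it meets $\mathcal{P}$.
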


For dd-domains there is a strong connection between discrete cores and finite primes, as the next proposition shows.
\begin{prop}
Let $D$ be a dd-domain with discrete core $X$, and let $\Phi:\Inv(D)\longrightarrow G\subseteq\funct(X,\insZ)$ be the canonical isomorphism; let $\Psi:\Spec(D)\longrightarrow\Spec(G)$ be the corresponding map of spectra. Then, $P\in X$ if and only if $\Psi(P)$ is a finite prime of $G$.
\end{prop}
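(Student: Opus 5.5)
We have to show that $P\in X$ if and only if $\Psi(P)=P_x$ for some $x\in X$. The plan is to first identify $\Psi$ explicitly on primes of $D$, and then compare kernels.

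For the direction ($\Rightarrow$), let $P\in X$. Under the correspondence of Section \ref{sect:prufer-ell}, transported to $\Inv(D)$ via the Kronecker function ring isomorphism, $\Psi(P)$ is the image under $\Phi$ of
\begin{equation*}
H_P=\{I\in\Inv(D)\mid ID_P=D_P\},
\end{equation*}
the kernel of the extension map $I\mapsto ID_P$. Since $D_P$ is a DVR, $ID_P=D_P$ holds exactly when $v_P(I)=0$, that is, when $\nu_I(P)=0$. Hence $\Phi(H_P)=\{f\in G\mid f(P)=0\}=P_P$, the ideal of Proposition \ref{prop:finite}\ref{prop:finite:filter} at the point $x=P$. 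This ideal is proper: $P\neq 0$ because $D_P$ is a DVR, so some $a\in P$ gives $\nu_{aD}(P)>0$. Therefore $\Psi(P)$ is a finite prime, namely the one associated to $P$.

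For the direction ($\Leftarrow$), suppose $\Psi(Q)=P_x$ for some prime $Q$ of $D$ and some $x\in X$. Here $x$ is itself a prime ideal of $D$ lying in $X$, and by the first direction $\Psi(x)=P_x$. Since $\Psi$ is a bijection, $Q=x\in X$.

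The only delicate point is ensuring that $\Psi$, built through $\Kr(D)$ when $D$ is not B\'ezout, really sends $P$ to the kernel of $I\mapsto ID_P$. This holds because $I\Kr(D)$ localized at the prime of $\Kr(D)$ corresponding to $P$ is trivial exactly when $ID_P=D_P$: the Kronecker function ring of $D$ localized there is the Nagata ring $D_P(T)$, and the extension $D_P\to D_P(T)$ is faithfully flat. Once this identification is checked, the rest is immediate.
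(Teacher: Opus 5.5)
Your proof is correct and follows the same route as the paper: you identify $\Psi(P)$ with $\{I\mid ID_P=D_P\}=\{I\mid \nu_I(P)=0\}$, which is the finite prime at $P$, and for the converse you use that every finite prime arises this way together with the injectivity of $\Psi$. The two extra checks you make, that $P_P\neq G$ and that the identification of $\Psi$ survives the passage through the Kronecker function ring, are details the paper leaves implicit.
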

\begin{proof}
If $P\in X$, then the finite prime at $P$ is just the set of all invertible ideal $I$ such that $\nu_I(P)=0$, that is, the set of $I$ such that $ID_P=D_P$. Thus $\Psi(P)$ is just the finite prime at $P$. Since these are all the finite primes of $G$, if $P\in\Spec(D)\setminus X$ then $\Psi(P)$ is an infinite prime.
\end{proof}

Under this interpretation, Remark \ref{oss:separaz-spec} can be seen as looking at the group of invertible ideals of the same dd-domain $D$ through two different discrete cores, $X$ and $X'$.

We say that a subgroup $G\subseteq\funct(X,\insZ)$ is \emph{of bounded ratio} if, whenever $f,g\in G^+$ have the same support, then $nf\geq g$ for some $n\inN$.
\begin{cor}\label{cor:Phi-boundratio}
Let $D$ be a one-dimensional dd-domain and let $X$ be a discrete core of $D$. Let $\Phi$ be the map defined in the statement of Theorem \ref{teor:dd-XZ}\ref{teor:dd-XZ:dd->}. Then, $\Phi(\Inv(D))$ is a group of bounded ratio.
\end{cor}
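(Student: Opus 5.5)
The plan is to translate the bounded-ratio condition into a statement about radicals of finitely generated ideals, and then use one-dimensionality to conclude. Let $f,g\in\Phi(\Inv(D))^+$ have the same support. Since $\Phi$ is an injective $\ell$-group homomorphism, we can write $f=\nu_I$ and $g=\nu_J$. Positivity means $\nu_I\geq 0$, hence $I\cap D\ldots$; more cleanly, $f=f\vee 0=\nu_{I\cap D}$ up to the $\ell$-structure, so we may take $I,J\subseteq D$ to be invertible integral ideals. Indeed, $f\geq 0$ means $I\geq D$ in reverse containment, i.e.\ $I\subseteq D$, by injectivity applied to $f\wedge 0=0$. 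The goal $nf\geq g$ is exactly $\nu_{I^n}\geq\nu_J$, which by injectivity and order-preservation is equivalent to $I^n\subseteq J$.

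Next I show $\rad(I)=\rad(J)$. The support of $\nu_I$ is $\{P\in X\mid I\subseteq P\}=\V(I)\cap X$. Because $I$ is finitely generated, $\V(I)$ is open in the inverse topology, and also closed, being Zariski closed. Hence $\V(I)$ is clopen in $\Spec(D)$, and likewise $\V(J)$. The symmetric difference $\V(I)\triangle\V(J)$ is then open and disjoint from $X$, so by density it is empty. Thus $\V(I)=\V(J)$ and $\rad(I)=\rad(J)$. This reuses the density argument from the proof of Theorem~\ref{teor:dd-XZ}\ref{teor:dd-XZ:dd->}.

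The key step, and the expected obstacle, is deducing $I^n\subseteq J$ from equality of radicals. Since $J$ need not be radical, this is not automatic in a non-Noetherian ring. However, $I$ is finitely generated, say $I=(a_1,\ldots,a_k)$, and each $a_i\in\rad(I)=\rad(J)$, so $a_i^{m_i}\in J$ for some $m_i$. Taking $n=\sum_i(m_i-1)+1$, every monomial of degree $n$ in the $a_i$ contains some $a_i^{m_i}$, so $I^n\subseteq J$. One-dimensionality is what forces $f,g$ to be nonnegative with equal support to correspond to ideals with equal radical (minimal primes over $I$ are maximal). In fact, in the argument above it enters only through the identification of supports with $\V(\cdot)\cap X$, which holds generally.

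Finally, $I^n\subseteq J$ gives $\nu_{I^n}=n\nu_I\geq\nu_J$, that is, $nf\geq g$, so $\Phi(\Inv(D))$ is of bounded ratio.
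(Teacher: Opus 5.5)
Your outline is the paper's. You pass to integral invertible ideals $I,J$, identify the supports with $\V(I)\cap X$ and $\V(J)\cap X$, use density of $X$ to get $\V(I)=\V(J)$, and conclude $I^n\subseteq J$ by finite generation. Your pigeonhole bound $n=\sum_i(m_i-1)+1$ is a correct way to make that last step explicit.

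However, the step giving $\V(I)=\V(J)$ is false as written. You claim $\V(I)$ is closed in the inverse topology of $\Spec(D)$ because it is Zariski closed, but this mixes up the two topologies. The sets $\V(a)$ are \emph{open} in the inverse topology, while inverse-closed sets are stable under generization. Concretely, the only open set containing $(0)$ is $\Spec(D)$ itself, so $(0)$ lies in the inverse closure of every nonempty set. Since $(0)\notin\V(I)$, the set $\V(I)$ is not closed in $\Spec(D)$, and you have not shown that $\V(I)\triangle\V(J)$ is open.

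This is exactly where one-dimensionality is needed. Your closing remark says it enters only through the identification of supports, which holds in general; so your argument, if valid, would prove too much. The paper's argument runs as follows. Since $D$ is one-dimensional and $I,J\neq(0)$, the sets $\V(I),\V(J)$ lie in $\Max(D)$. So does $X$, because $D_{(0)}$ is a field, not a DVR; hence $X$ is dense in $\Max(D)$. In $\Max(D)$ each $\V(a)\cap\Max(D)$ is clopen: if $a\notin M$, write $1=m+ab$ with $m\in M$, $b\in D$, and then $\V(m)\cap\Max(D)$ is a neighborhood of $M$ missing $\V(a)$. So $\V(I)$ and $\V(J)$ are clopen subsets of $\Max(D)$ agreeing on a dense set, hence equal.

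Without one-dimensionality the statement actually fails. Take the ring $E$ of entire functions, a dd-domain by Example~\ref{ex:dd}. Let $f$ have simple zeros exactly at the positive integers, and let $g$ have a zero of order $k$ at each positive integer $k$. Then $\nu_{fE}$ and $\nu_{gE}$ have the same support, but no multiple of $\nu_{fE}$ dominates $\nu_{gE}$. Correspondingly, $\V(fE)\subsetneq\V(gE)$ even though the two sets agree on $X$.
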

\begin{proof}
Let $f=\Phi(I)$, $g=\Phi(J)$ be elements of $\Phi(\Inv(D))^+$ with the same support. Then, $\V(I)\cap X=\V(J)\cap X$. Since $D$ is one-dimensional, both $\V(I),\V(J)$ are clopen sets of $\Max(D)$; since $X$ is dense, it follows that $\V(I)=\V(J)$, and thus $I$ and $J$ have the same radical. Since $I$ is finitely generated, there is an integer $n$ such that $I^n\subseteq J$, and thus
\begin{equation*}
nf=n\Phi(I)=\Phi(I^n)\geq\Phi(J)=g.
\end{equation*}
Hence $\Phi(\Inv(D))$ is a group of bounded ratio.
\end{proof}

\begin{cor}\label{cor:1dim->br}
Let $X$ be a nonempty set and $G\subseteq\funct(X,\insZ)$ be an $\ell$-group. If $G$ is one-dimensional, then it is a group of bounded ratio.
\end{cor}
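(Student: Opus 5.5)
Since every $\ell$-group is isomorphic to $\Inv(D)$ for some B\'ezout domain $D$ (Section \ref{sect:prufer-ell}), I will transfer the problem to rings and apply Corollary \ref{cor:Phi-boundratio}.

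First, fix a B\'ezout domain $D$ and an $\ell$-isomorphism $\Inv(D)\simeq G$. By Theorem \ref{teor:dd-XZ}\ref{teor:dd-XZ:->dd}, $D$ is a dd-domain. Its proof shows that the primes $Q_x$ of $D$ corresponding to the finite primes $P_x$ form a discrete core $Y$. Since $\dim(D)=\dim(G)=1$, $D$ is a one-dimensional dd-domain.

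Second, I will compare $G$ with the image of $\Phi\colon\Inv(D)\to\funct(Y,\insZ)$. Corollary \ref{cor:Phi-boundratio} says $\Phi(\Inv(D))$ has bounded ratio. The difficulty is that $G$ lives on $X$, not $Y$, and several points of $X$ may give the same finite prime. Let $\sigma\colon G\to\Inv(D)$ be the isomorphism. For $x\in X$ with $P_x\neq G$, the evaluation $v_x$ factors through $G/P_x\simeq\insZ$, and $v_{Q_x}\circ\sigma$ also induces an isomorphism $G/P_x\to\insZ$. Each of these maps is order-preserving on the totally ordered group $G/P_x$. Hence $f(x)=c_x\,\nu_{\sigma(f)}(Q_x)$ for some positive integer $c_x$, with $c_x$ independent of $f$. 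Points with $P_x=G$ satisfy $f(x)=0$ for all $f\in G$.

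Finally, take $f,g\in G^+$ with the same support. Then $\Phi(\sigma f)$ and $\Phi(\sigma g)$ are positive with the same support in $Y$, since $f(x)\neq0$ if and only if $\nu_{\sigma f}(Q_x)\neq0$. By Corollary \ref{cor:Phi-boundratio} there is $n$ with $n\nu_{\sigma f}\geq\nu_{\sigma g}$ on $Y$. Multiplying by $c_x>0$ pointwise gives $nf(x)\geq g(x)$ for every $x$ with $P_x\neq G$, and both sides vanish at the other points. So $nf\geq g$.

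The main obstacle is the second step, and specifically the positivity of $c_x$. It requires that both $v_x$ and $v_{Q_x}\circ\sigma$ respect the order on $G/P_x$. This holds because $\sigma$ is an $\ell$-isomorphism and the correspondence $\Psi$ identifies the residue group at $P_x$ with the value group of $D_{Q_x}$ compatibly with the order.

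There is also a direct route that avoids rings. By Corollary \ref{cor:Phi-boundratio}'s proof idea, $\D(f)=\D(g)$ should follow from equal supports via density of the finite primes and one-dimensionality, which makes $\D(f)$ clopen in $\Specast(G)$. One then concludes $g\in$ the ideal generated by $f$, using that the principal convex ideals of $f$ and $g$ coincide when $\D(f)=\D(g)$. I would use this as a fallback.
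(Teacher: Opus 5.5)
Your proof is correct and follows the paper's route: realize $G$ as $\Inv(D)$ for a one-dimensional B\'ezout dd-domain and invoke Corollary~\ref{cor:Phi-boundratio}. Your second step, showing $f(x)=c_x\,\nu_{\sigma(f)}(Q_x)$ with $c_x>0$, makes explicit the transfer from $\Phi(\Inv(D))$ back to the given embedding $G\subseteq\funct(X,\insZ)$. The paper's one-line proof skips this by writing $\Phi(\Inv(D))\simeq G$, even though bounded ratio depends on the embedding (cf.\ Remark~\ref{oss:separaz-spec}).
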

\begin{proof}
Let $D$ be a B\'ezout domain such that $\Inv(D)\simeq G$. Then, $D$ is a dd-domain (Theorem \ref{teor:dd-XZ}\ref{teor:dd-XZ:dd->}) and $\dim(G)=\dim(\Inv(D))=\dim(D)$. Thus $D$ is one-dimensional and $\Phi(\Inv(D))\simeq G$ is a group of bounded ratio by Corollary \ref{cor:Phi-boundratio}.
\end{proof}

\begin{ex}\label{ex:dd}
~\begin{enumerate}[(1)]
\item Every Dedekind domain is a dd-domain. Set $X:=\Max(D)$: then every maximal ideal $M$ is invertible, and the ideal function $\nu_M$ is just the characteristic function of $\{M\}$. Since every invertible ideal is a product of maximal ideals (possibly with negative exponents) the image $\Phi(\Inv(D))$ is exactly the direct sum $\dirsum(X,\insZ)$.
\item Every almost Dedekind domain is a dd-domain and, like in the Dedekind case, a discrete core is $X:=\Max(D)$. If $D$ is not a Dedekind domain, some maximal ideals are not invertible (and thus not all $e_x$ are in $\Phi(\Inv(D))$); conversely, some ideals $I$ are contained in infinitely many maximal ideals, and thus $\Phi(\Inv(D))$ is not contained in $\dirsum(X,\insZ)$.
\item\label{ex:dd:ad2} If $D$ is an almost Dedekind domain that is not Dedekind, there are discrete cores $X$ that are properly contained in $\Max(D)$. Indeed, since $D$ is not Dedekind that are always maximal ideals that are not invertible (equivalently, that are not finitely generated); if $M$ is one of them, then $M$ is not an isolated point of $\Max(D)$, and thus $\Max(D)\setminus\{M\}$ is a discrete core of $D$. A less obvious discrete core is the set of all maximal ideals that are not critical (see \cite[Theorem 5.6]{bounded-almded}). More generally, every Hausdorff zero-dimensional compact space can be realized as the maximal space of an almost Dedekind domain \cite[Section 3]{olberding-factoring-SP}, and any dense subset of such a space can be used as a discrete core.
\item There are one-dimensional Pr\"ufer domain that are dd-domains but not almost Dedekind. For example, if $D$ has a unique noninvertible maximal ideal, say $N$, and $N$ is not isolated in $\Max(D)$, then $X:=\Max(D)\setminus\{N\}$ is a discrete core (since every other maximal ideal $M$ is invertible and thus the localization $D_M$ is a DVR); if, moreover, $D_N$ is not a DVR then $D$ is not almost Dedekind. See Example \ref{ex:limitQ} for an $\ell$-group arising from a domain of this kind.
\item Let $p$ be a prime number. If $A=\insZ_p$ is the localization of $\insZ$ at $(p)$, then the ring $\mathrm{Int}(A)$ of the integer-valued polynomials is a dd-domain; consequently, also $\mathrm{Int}(\insZ)$ is a dd-domain. On the other hand, if $B$ is the ring of the $p$-adic integers, then $\mathrm{Int}(B)$ is not a dd-domain as it is not the intersection of any family of one-dimensional localizations (and thus, in particular, $\mathrm{Int}(B)$ cannot have a discrete core) \cite[Proposition 3 and subsequent Corollaires]{chabert-fatou}.
\item The ring $E$ of all entire functions (i.e., of all functions $\insC\longrightarrow\insC$ that are holomorphic everywhere) is a dd-domain. Setting $M_\alpha$ to be the maximal ideal generated by $z-\alpha$, then $X:=\{M_\alpha\mid\alpha\inC\}$ is a discrete core (since every nonzero nonunit entire function must have zeros), and the range of the map $\Phi$ is the whole $\funct(X,\insZ)$. Note that $E$ is infinite-dimensional \cite{henriksen_prime}.
\end{enumerate}
\end{ex}

Let $G\subseteq\funct(X,\insZ)$ be an $\ell$-group. We can construct a B\'ezout domain $D$ such that $\Inv(D)\simeq G$, and by Theorem \ref{teor:dd-XZ} $D$ will be a dd-domain; using again Theorem \ref{teor:dd-XZ} we can construct an isomorphism $\Inv(D)\longrightarrow G'\subseteq\funct(X',\insZ)$, where $X'$ is a discrete core of $D$. In general, even if $X'=X$, there is no guarantee that $G'=G$: for example, if $G=\langle 2e_x\mid x\in X\rangle$, then $D$ will be a Dedekind domain, and $\Phi(\Inv(D))$ will just be the direct sum $\dirsum(X,\insZ)=\langle e_x\mid x\in X\rangle$. More generally, it is not hard to see that $G$ will never be the image of any function $\Phi$ from $\Inv(D)$ to $\funct(X,\insZ)$, for every Pr\"ufer domain $D$.

This fact does not cause any difficulty, but requires a technical workaround.
\begin{defin}
Let $G\subseteq\funct(X,\insZ)$ be an $\ell$-group. We say that $G$ is \emph{reduced} if there is a Pr\"ufer domain $D$ with discrete core $Y$ such that $G$ is the image of the composition
\begin{equation*}
\Inv(D)\xrightarrow{~~~\Phi~~~}\funct(Y,\insZ)\xrightarrow{~~~f^\ast~~~}\funct(X,\insZ)
\end{equation*}
where $\Phi$ is the map defined in Theorem \ref{teor:dd-XZ}\ref{teor:dd-XZ:->dd} and $f^\ast$ is the map induced by a bijection $f:Y\longrightarrow X$.
\end{defin}

\begin{prop}\label{prop:reduction-reduced}
Let $G\subseteq\funct(X,\insZ)$ be an $\ell$-group. There is an $X'$ and a reduced $\ell$-group $G'$ such that $G\simeq G'$.
\end{prop}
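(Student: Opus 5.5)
The plan is to pass through a B\'ezout domain and then come back to functions via the canonical map of Theorem \ref{teor:dd-XZ}. Given the $\ell$-group $G\subseteq\funct(X,\insZ)$, first choose a B\'ezout domain $D$ with $\Inv(D)\simeq G$ as $\ell$-groups. This is possible by the realization theorem recalled in Section \ref{sect:prufer-ell} (every $\ell$-group is the group of divisibility, hence $\Inv$, of some B\'ezout domain).

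By Theorem \ref{teor:dd-XZ}\ref{teor:dd-XZ:->dd}, $D$ is a dd-domain. Its proof actually supplies an explicit discrete core: let $Y\subseteq\Spec(D)$ be the set of prime ideals corresponding, under the homeomorphism $\Psi$ of Section \ref{sect:topologies}, to the finite primes $P_x$ of $G$. Any discrete core works, so fix this $Y$.

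Now apply Theorem \ref{teor:dd-XZ}\ref{teor:dd-XZ:dd->} to $D$ and $Y$. The map $\Phi\colon\Inv(D)\longrightarrow\funct(Y,\insZ)$, $I\mapsto\nu_I$, is an injective $\ell$-group homomorphism. Put $X':=Y$ and $G':=\Phi(\Inv(D))$. Then $G'\simeq\Inv(D)\simeq G$ as $\ell$-groups, since $\Phi$ is an $\ell$-isomorphism onto its image. Moreover, $G'$ is reduced by definition: take the Pr\"ufer domain $D$, the discrete core $Y$, and the bijection $f=\mathrm{id}_Y\colon Y\longrightarrow X'$, so that $f^\ast$ is the identity and $G'$ is exactly the image of $f^\ast\circ\Phi$.

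I expect no real obstacle, because the statement only asks for an abstract $\ell$-isomorphism $G\simeq G'$. The only point needing care is that $G$ and $G'$ can genuinely differ as subgroups of function groups, for instance $G=\langle 2e_x\rangle$ versus the direct sum. For that reason I will not try to take $X'=X$ or make $G'$ coincide with $G$ pointwise.

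If one wants $X'$ in bijection with (a subset of) $X$, note that the finite primes of $G$ are indexed by the $x$ with $P_x\neq G$. Distinct $x$ may give the same prime, so one passes to the quotient set of such $x$ modulo $P_x=P_y$. This refinement is not needed for the statement as written.

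Finally, I must check that the isomorphism $\Inv(D)\simeq G$ respects the lattice structure and not just the group structure. With the reverse-containment order on $\Inv(D)$, this is guaranteed by the cited realization theorem.
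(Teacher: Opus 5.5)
Your proposal is correct and follows essentially the same route as the paper: you realize $G$ as $\Inv(D)$ for a B\'ezout domain $D$, use Theorem~\ref{teor:dd-XZ}\ref{teor:dd-XZ:->dd} to obtain a discrete core, and take $G'$ to be the image of $\Phi$ from Theorem~\ref{teor:dd-XZ}\ref{teor:dd-XZ:dd->}, which is reduced by definition with $f$ the identity. Your extra remarks are harmless refinements that the paper does not need. These are the choice of the specific core coming from the finite primes and the decision not to insist on $X'=X$.
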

\begin{proof}
Let $D$ be a Pr\"ufer domain such that $\Inv(D)\simeq G$. By Theorem \ref{teor:dd-XZ}, $D$ is a dd-domain, and thus it has a discrete core $X'$. Applying again the theorem we see that $\Inv(D)$ is isomorphic to a subgroup of $\funct(X',\insZ)$, which is reduced by definition.
\end{proof}

The main argument of the proofs of the main results in Sections \ref{sect:successor} and \ref{sect:limit} will be done group-theoretically; however, when dealing with the spectrum, it is often easier to work with B\'ezout domains. We give two examples of this method of proof.

\begin{lemma}\label{lemma:supp-primeinfty}
Let $G\subseteq\funct(X,\insZ)$ be a one-dimensional $\ell$-group, and let $f\in G$. If $\supp(f)$ is finite, then $f$ belongs to every $P\in\Spec^\infty(G)$.
\end{lemma}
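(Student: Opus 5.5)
Since $G$ is an $\ell$-group and every prime $P$ is an $\ell$-subgroup, $f=f^+-f^-$ with $f^+=f\vee 0$ and $f^-=(-f)\vee 0$, and $\supp(f^\pm)\subseteq\supp(f)$. So it suffices to treat $f\in G^+$ with finite support $S=\supp(f)=\{x_1,\dots,x_n\}$. I plan to show that $\D(f)\subseteq\{P_{x_1},\dots,P_{x_n}\}$. This says exactly that $f\in P$ for every prime that is not one of the finite primes $P_{x_i}$, and in particular for every $P\in\Spec^\infty(G)$.

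The natural route is to pass to a B\'ezout domain, as the paper announces. Let $D$ be a B\'ezout domain with $\Inv(D)\simeq G$. By Theorem \ref{teor:dd-XZ}, $D$ is a one-dimensional dd-domain, and $\Psi$ is a homeomorphism between $\Spec(D)$ with the inverse topology and $\Spec(G)$ with the hull-kernel topology. Under this correspondence the nonzero primes of $D$ are the maximal ideals, and they correspond to $\Specast(G)$. The element $f$ corresponds to an invertible ideal $I$, and $\D(f)$ corresponds to $\V(I)\cap\Max(D)$.

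\textbf{Key claim.} I would show that $\D(f)$ is an open subset of $\Specast(G)$ in which the finite set $\mathcal{P}\cap\D(f)=\{P_{x_1},\dots,P_{x_n}\}$ is dense. Openness is clear. For density, take $P\in\D(f)$ and a basic open $\D(g)\ni P$. Then $\D(g\wedge f)=\D(g)\cap\D(f)$ is a nonempty open set, because primes satisfy $g\wedge f\in P$ if and only if $g\in P$ or $f\in P$. By Proposition \ref{prop:finite}\ref{prop:finite:dense} it contains some $P_x$. Then $x\in\supp(g\wedge f)\subseteq S$, using the lemma relating supports to $\D(\cdot)$.

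To conclude, I use that $\Max(D)$ is Hausdorff in the inverse topology. Hence $\Specast(G)$ is Hausdorff, so finite subsets are closed. Within $\D(f)\subseteq\Specast(G)$ (one-dimensionality ensures $G$ itself is not in $\D(f)$), the closure of $\{P_{x_1},\dots,P_{x_n}\}$ is this set itself. Density then forces $\D(f)=\{P_{x_1},\dots,P_{x_n}\}$, so no infinite prime lies in $\D(f)$, i.e.\ $f\in P$ for every $P\in\Spec^\infty(G)$.

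\textbf{Main obstacle.} The delicate step is the Hausdorffness. It uses one-dimensionality essentially: $\Max(D)=\Spec(D)\setminus\{(0)\}$, and Section \ref{sect:topologies} gives that $\Max(D)$ is Hausdorff. One must check that the transfer via $\Psi$ really identifies $\Specast(G)$ with $\Max(D)$ homeomorphically. This is stated in Section \ref{sect:topologies}, so I would simply invoke it there.
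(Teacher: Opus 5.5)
Your proof is correct and is essentially the paper's argument. The paper also passes to a B\'ezout domain and observes that the open set $\V(I)$, which corresponds to $\D(f)$, meets the dense set of finite primes in only finitely many points and hence is contained in it; this tacitly uses the same Hausdorff (indeed $T_1$) property of $\Max(D)\cong\Specast(G)$ that you make explicit. Your reduction to $f\in G^+$ also cleanly handles the case where $I$ is not an integral ideal.

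One small correction: $G\notin\D(f)$ holds trivially because $f\in G$, so one-dimensionality is used only to get Hausdorffness of $\Specast(G)$.
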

\begin{proof}
Let $D$ be a B\'ezout with an isomorphism $\Phi:\Inv(D)\longrightarrow G$, and let $\Psi:\Spec(D)\longrightarrow\Spec(G)$ the canonical homeomorphism. Then,
\begin{equation*}
\supp(f)=X\setminus\{x\in X\mid f(x)=0\}=X\setminus\{x\in X\mid f\in P_x\}.
\end{equation*}
Therefore, $f$ belongs to almost all $P_x$. Let $I:=\Phi^{-1}(f)$: since $\Psi(\V(I))=\D(f)$, we have that $I$ belongs to only finitely many prime ideals of $X':=\{\Psi(P_x)\mid x\in X\}$. Since $X'$ is dense in $\Max(D)$, it follows that $\V(I)\subseteq X'$, i.e., $I\nsubseteq Q$ for all $Q\in\Max(D)\setminus X'$. Hence $\Psi(Q)\notin\Psi(\V(I))=\D(f)$, i.e., $f\in\Psi(Q)$. Since $\Max(D)\setminus X'$ maps to $\Spec^\infty(G)$, the claim is proved.
\end{proof}

\begin{prop}
Let $G\subseteq\funct(X,\insZ)$ be a reduced $\ell$-group. Then:
\begin{enumerate}[(a)]
\item $P_x\neq P_y$ for every $x\neq y$;
\item for every $x\in X$ there is an $f_x\in G^+$ such that $f_x(x)=1$.
\end{enumerate}
\end{prop}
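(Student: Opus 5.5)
Since $G$ is reduced, we may write $G=f^\ast(\Phi(\Inv(D)))$ for a Pr\"ufer domain $D$ with discrete core $Y$ and a bijection $f\colon Y\to X$. Both statements are invariant under relabelling points along $f$, so we may assume $X=Y$ and $G=\Phi(\Inv(D))$, with the element of $G$ attached to $I\in\Inv(D)$ being $\nu_I(Q)=v_Q(I)$ for $Q\in X$. By the normalization in the preliminaries, each $v_Q$ takes the value $1$ on a generator of $QD_Q$.

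We prove (b) first, since (a) will follow from it. Fix $Q\in X$. Because $D_Q$ is a DVR, choose $t\in D$ whose image generates $QD_Q$; elements of $D$ suffice since $D_Q$ is a localization of $D$. Then $v_Q(t)=1$. The ideal $I=tD$ is principal, hence invertible, and it is contained in $D$. Therefore $v_P(I)\geq 0$ for every $P\in X$, so $\nu_I\geq 0$ and $\nu_I(Q)=1$. In particular $\nu_I\neq 0$, so $\nu_I\in G^+$ and we set $f_Q:=\nu_I$.

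For (a), let $x\neq y$ in $X$; they are distinct primes $Q_x\neq Q_y$ of $D$. We look for $I\in\Inv(D)$ with $v_{Q_x}(I)=0$ and $v_{Q_y}(I)\neq 0$, or the reverse; then $\nu_I\in P_x\setminus P_y$ (or the reverse), so $P_x\neq P_y$.

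The two primes are distinct, so one of them, say $Q_y$, is not contained in the other. Pick $a\in Q_y\setminus Q_x$ and set $I:=aD$. Then $a$ is a unit in $D_{Q_x}$, so $v_{Q_x}(I)=0$, while $a\in Q_yD_{Q_y}$ gives $v_{Q_y}(I)>0$. If instead $Q_x\not\subseteq Q_y$, swap the roles of $x$ and $y$. One could also argue through the homeomorphism $\Psi$, which is a bijection and sends $Q_x$ and $Q_y$ to $P_x$ and $P_y$ respectively, but the direct element argument avoids identifying the image of $\Psi$ with finite primes.

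The only point needing care is the reduction in the first paragraph: $f^\ast$ merely composes functions with the bijection $f$. Hence evaluation of an element of $G$ at the point $f(Q)\in X$ is the same as evaluation of the corresponding element $\nu_I$ at $Q\in Y$. Since $f$ is a bijection, both (a) and (b) transfer verbatim between the two settings.
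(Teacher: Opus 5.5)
Your proof is correct and follows the same outline as the paper's. You reduce to $Y=X$, obtain (b) from a principal ideal generated by an element of $D$ with $v_x$-value $1$, and obtain (a) from the fact that $x\neq y$ are distinct primes of $D$.

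The differences are only in the details, and they work in your favour:
\begin{itemize}
\item For (b), the paper takes $d\in x\setminus x^2$, which relies on $x^2D_x\cap D=x^2$ to get $v_x(d)=1$. Your choice of a generator of $QD_Q$ gives $v_x=1$ directly.
\item For (a), the paper appeals to the bijection $\Spec(D)\to\Spec(\Inv(D))$, which it states for a B\'ezout $D$ even though reducedness is defined via a Pr\"ufer domain. Your explicit $a\in Q_y\setminus Q_x$, with $\nu_{aD}\in P_x\setminus P_y$, works directly for any Pr\"ufer $D$.
\end{itemize}
One small correction to the write-up: drop the remark that (a) follows from (b), since your argument for (a) does not use it.
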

\begin{proof}
Let $D$ be a B\'ezout domain with discrete core $Y$ such that $G$ is the image of the composition
\begin{equation*}
\Inv(D)\xrightarrow{~~~\Phi~~~}\funct(Y,\insZ)\xrightarrow{~~~f^\ast~~~}\funct(X,\insZ).
\end{equation*}
We can suppose without loss of generality that $Y=X$ (and $f$ is the identity).

If $x\in X$, then $P_x$ is the image of the prime ideal $x$ of $D$; in particular, if $x\neq y$ then $P_x$ and $P_y$ are images of distinct primes, and thus are distinct. Moreover, there is a $d\in D$ such that $d\in x\setminus x^2$; if $f\in\funct(X,\insZ)$ is the image of the ideal $(d)$, then $f\in G^+$ and $f(x)=1$.
\end{proof}

\section{Derived sets}
Let $T$ be a topological space. The \emph{derived set} $\deriv(T)=\deriv^1(T)$ of $T$ is the set of all limit points of $T$ (i.e., of all the points that are not isolated); more generally, for every ordinal number $\alpha$ we can define the $\alpha$-th derived set of $T$ by
\begin{equation*}
\deriv^\alpha(T):=\begin{cases}
T & \text{if~}\alpha=0,\\
\deriv(\deriv^\beta(T)) & \text{if~}\alpha=\beta+1,\\
\bigcap_{\beta<\alpha}\deriv^\beta(T) & \text{if~}\alpha\text{~is a limit ordinal}.
\end{cases}
\end{equation*}
The \emph{Cantor-Bendixson rank} $\cb(p)$ of a point $p\in T$ is the largest $\alpha$ such that $p\in\deriv^\alpha(T)$, or $\infty$ if such an $\alpha$ does not exist. The \emph{Cantor-Bendixson rank} $\cb(T)$ of $T$ is the smallest ordinal $\alpha$ such that $\deriv^\alpha(T)=\deriv^{\alpha+1}(T)$, and in this case we set $\deriv^\infty(T):=\deriv^\alpha(T)=\deriv^{\alpha+1}(T)$ (this set is called the \emph{perfect hull} of $T$). Therefore,
\begin{equation*}
\deriv^\alpha(T)=\{p\in T\mid \cb(p)\geq\alpha\}
\end{equation*}
and 
\begin{equation*}
\cb(T)=\sup\{\cb(p)\mid p\in T\}.
\end{equation*}
If $T$ is compact, the supremum above is a maximum.

We say that $T$ is \emph{scattered} if $\deriv^\infty(T)=\emptyset$.

We shall be interested in the Cantor-Bendixson rank of the finite primes $P_x$. We will use the following notation.
\begin{defin}
Let $G\subseteq\funct(X,\insZ)$ be a reduced $\ell$-group.
\begin{itemize}
\item If $P\in\Specast(G)$, we denote by $\cb(P)$ the Cantor-Bendixson rank of $P$ as a point of $\Specast(G)$.
\item If $x\in X$, we set $\cb(x):=\cb(P_x)$.
\item If $f\in G^+$, we set $\cb(f):=\sup\{\cb(P)\mid P\in\D(f)\}$.
\item If $f\in G$, we set $\cb(f)=\max(\cb(f^+),\cb(f^-))$, where $f^+:=f\vee 0$ and $f^-:=f\wedge 0$ (and $\cb(0)=0$). 
\item For every ordinal number $\alpha$ and for $\alpha=\infty$, we set
\begin{equation*}
\derivX^\alpha(X):=\{x\in X\mid P_x\in\deriv^\alpha(\Spec(G))\cap X\}.
\end{equation*}\end{itemize}
\end{defin}

We note that the notation $\derivX^\alpha(X)$ can be ambiguous, as its definition depends on $G$, which is not reflected in the notation. However, we shall never deal at the same time with derived sets induced by different groups, and thus we avoid specifying $G$ to streamline the notation.

\begin{lemma}\label{lemma:sum-cb}
Let $G\subseteq\funct(X,\insZ)$ be a reduced $\ell$-group, and let $f,g\in G^+$. Then, $\cb(f+g)=\max\{\cb(f),\cb(g)\}$.
\end{lemma}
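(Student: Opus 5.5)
The plan is to reduce the statement to the identity $\D(f+g)=\D(f)\cup\D(g)$ for $f,g\in G^+$. Once this holds, the definition $\cb(h)=\sup\{\cb(P)\mid P\in\D(h)\}$ gives
\begin{equation*}
\cb(f+g)=\sup\{\cb(P)\mid P\in\D(f)\cup\D(g)\}=\max\{\cb(f),\cb(g)\},
\end{equation*}
because a supremum over a union of two sets is the maximum of the two suprema. This also holds when one of the sets is empty, for instance if $f=0$, in which case $\D(f)=\emptyset$ and $\cb(0)=0$.

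To prove the identity, fix a prime $P\in\Spec(G)$; it is convex. Since $0\leq f\leq f+g$ and $0\leq g\leq f+g$, convexity gives that $f+g\in P$ implies $f,g\in P$. Conversely, $P$ is a subgroup, so $f,g\in P$ implies $f+g\in P$. Hence $f+g\in P$ if and only if both $f\in P$ and $g\in P$. Taking complements, $P\in\D(f+g)$ if and only if $P\in\D(f)$ or $P\in\D(g)$, which is the identity.

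One point to check is that the Cantor-Bendixson rank in the definition is computed in $\Specast(G)$. Since $f,g\in G^+$ and $G$ itself is never in any $\D(h)$, all the sets involved lie in $\Specast(G)$, so the identity is not affected. Primeness of $P$ is not even needed, only its convexity. The only possible subtlety is when some rank is $\infty$; I will treat $\infty$ as larger than every ordinal, and the argument is unchanged. I do not expect any real obstacle here.
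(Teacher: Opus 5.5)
Your proof is correct and is essentially the paper's argument: you prove $\D(f+g)=\D(f)\cup\D(g)$ using convexity with $0\leq f,g\leq f+g$ for one inclusion and closure of $P$ under addition for the other, then read off the rank from its definition as a supremum. Your extra remarks (that $G\notin\D(h)$, and the handling of ranks equal to $\infty$) are harmless details that the paper leaves implicit.
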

\begin{proof}
Since $f,g\in G^+$, we have $\D(f+g)=\D(f)\cup\D(g)$. Indeed, suppose that $P\in\D(f)\cup\D(g)$: then, without loss of generality, $P\in\D(f)$, i.e., $f\notin P$. Since $0\leq f\leq f+g$, the convexity of $P$ implies that $f+g\notin P$, i.e., $P\in\D(f+g)$. Conversely, if $P\in\D(f+g)$, then $f+g\notin P$ and thus at least one of $f$ and $g$ must be out of $P$ (since $P$ is a subgroup). Thus $P\in\D(f)\cup\D(g)$. The claim now follows from the definition of the Cantor-Bendixson rank.
\end{proof}

Suppose that $x$ is an isolated point of $X$. Then, $\{x\}$ is open, and thus $\{x\}=\supp(f)$, or equivalently $\{P_x\}=\D(f)$, for some $f\in G$. If $G$ is reduced, it follows that $G$ contains the basic element $e_x$. To generalize this fact to non-isolated elements, we introduce the following notion.
\begin{defin}
Let $G\subseteq\funct(X,\insZ)$ be an $\ell$-group and let $x\in X$. We say that $q\in G^+$ is a \emph{semibasic element at $x$} if $q(x)=1$ and $\supp(q)\cap\derivX^{\cb(x)}(X)=\{x\}$.
\end{defin}

\begin{lemma}\label{lemma:quark}
Let $G\subseteq\funct(X,\insZ)$ be a reduced $\ell$-group, and let $x\in X$.
\begin{enumerate}[(a)]
\item\label{lemma:quark:magg} If $q$ is semibasic at $x$, then $\supp(q)\cap\derivX^{\cb(x)+1}(X)=\emptyset$.
\item\label{lemma:quark:exists} If $x\notin\derivX^\infty(X)$, then there is a $q\in G^+$ that is semibasic at $x$.
\item\label{lemma:quark:isolated} If $x\notin\derivX^0(X)$, then $q=e_x$ is the unique element that is semibasic at $x$.
\end{enumerate}
\end{lemma}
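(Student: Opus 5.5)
The three parts are essentially unwindings of the definitions. The only genuine input is that the sets $\D(f)$ with $f\in G^+$ form a base of $\Spec(G)$, combined with the two properties of reduced groups proved in the last proposition of the previous section (distinct points give distinct primes, and each $x$ admits some $f_x\in G^+$ with $f_x(x)=1$). Throughout, let $\alpha:=\cb(x)$. In parts \ref{lemma:quark:magg} and \ref{lemma:quark:exists} we are in the situation $x\notin\derivX^\infty(X)$, so $\alpha$ is an ordinal and $P_x\in\deriv^\alpha\setminus\deriv^{\alpha+1}$.

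For \ref{lemma:quark:magg}, I would use only the inclusion $\derivX^{\alpha+1}(X)\subseteq\derivX^\alpha(X)$. This gives
\begin{equation*}
\supp(q)\cap\derivX^{\alpha+1}(X)\subseteq\supp(q)\cap\derivX^{\alpha}(X)=\{x\}.
\end{equation*}
Since $\cb(P_x)=\alpha$ exactly, $x\notin\derivX^{\alpha+1}(X)$, so the intersection is empty.

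For \ref{lemma:quark:exists}, I would first use that $P_x$ is an isolated point of $\deriv^\alpha(\Specast(G))$, because its rank is exactly $\alpha$. Hence there is an open set $\Omega$ of $\Specast(G)$ with $\Omega\cap\deriv^\alpha=\{P_x\}$. The sets $\D(f)$, $f\in G^+$, form a base, since they are closed under finite intersections via $\D(f)\cap\D(g)=\D(f\wedge g)$. So I can shrink $\Omega$ to some $\D(f)\ni P_x$ with $f\in G^+$, and then $\D(f)\cap\deriv^\alpha=\{P_x\}$.

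Next I take $f_x\in G^+$ with $f_x(x)=1$, which exists because $G$ is reduced, and set $q:=f\wedge f_x\in G^+$. Since $f(x)\geq1$, we get $q(x)=1$. Moreover $\supp(q)\subseteq\supp(f)$.

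It remains to check the support condition. Let $y\in\supp(q)\cap\derivX^\alpha(X)$. By the trivial lemma relating supports and $\D(f)$, we have $P_y\in\D(f)\cap\deriv^\alpha=\{P_x\}$. By reducedness, $P_y=P_x$ forces $y=x$. Hence $\supp(q)\cap\derivX^\alpha(X)=\{x\}$, and $q$ is semibasic at $x$.

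For \ref{lemma:quark:isolated}, I read the hypothesis as saying that $x$ is isolated, i.e.\ $\cb(x)=0$; the literal $\derivX^0(X)$ is all of $X$, so the intended condition is presumably $x\notin\derivX^1(X)$. In that case $\derivX^0(X)=X$, so a semibasic $q$ satisfies $\supp(q)=\{x\}$ and $q(x)=1$, which forces $q=e_x$. Existence follows from \ref{lemma:quark:exists}, because an isolated point does not lie in the perfect hull. This also recovers the remark before the definition that $e_x\in G$.

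The only step needing real care is in \ref{lemma:quark:exists}: the open set provided by isolation must be refined to a basic $\D(f)$, and then intersected with $f_x$ to normalize the value at $x$ to $1$. Both operations depend on $\wedge$ behaving well with respect to $\D(\cdot)$ and supports.
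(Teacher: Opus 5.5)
Your proposal is correct and takes essentially the same route as the paper. Part (a) uses the inclusion $\derivX^{\cb(x)+1}(X)\subseteq\derivX^{\cb(x)}(X)$; part (b) isolates $P_x$ in $\deriv^{\cb(x)}$ by a basic open set $\D(f)$ and replaces $f$ by $f\wedge g$ with $g(x)=1$; part (c) specializes (b) to rank $0$. Your reading of (c) as $\cb(x)=0$, i.e.\ $x\notin\derivX^1(X)$, is the intended one, since the paper's own proof uses $\derivX^0(X)=X$. Your explicit checks that the sets $\D(f)$ form a base and that $P_y=P_x$ forces $y=x$ fill in steps the paper leaves implicit.
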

\begin{proof}
\ref{lemma:quark:magg} Since $\derivX^{\cb(x)+1}(X)\subseteq\derivX^{\cb(x)}(X)$, we have
\begin{equation*}
\supp(q)\cap\derivX^{\cb(x)+1}(X)\subseteq\supp(q)\cap\derivX^{\cb(x)}(X)=\{x\}.
\end{equation*}
Since $x\notin\derivX^{\cb(x)+1}(X)$, we must have $\supp(q)\cap\derivX^{\cb(x)+1}(X)=\emptyset$.

\ref{lemma:quark:exists} Since $x\notin\derivX^\infty(X)$, the prime ideal $P_x$ is an isolated point of $\deriv^{\cb(x)}(\Spec(G))$, and thus there is an open set $\Omega$ of $\Spec(G)$ such that $\Omega\cap\deriv^{\cb(x)}(\Spec(G))=\{P_x\}$. Therefore, we can find $f\in G^+$ such that $P_x\in\supp(f)\subseteq\Omega$; for such a $f$, we have $\supp(f)\cap\derivX^{\cb(x)}(X)=\{x\}$. Since $G$ is reduced, we can find a $g\in G^+$ such that $g(x)=1$; therefore, $q:=f\wedge g$ is semibasic at $x$. 

\ref{lemma:quark:isolated} If $x\notin\derivX^0(X)$, then also $x\notin\derivX^\infty(X)$ and thus, by the previous point, there is a semibasic element $q$; in particular, $\{x\}=\supp(q)\cap\derivX^0(X)=\supp(f)\cap X=\supp(q)$ and thus $q=ne_x$ for some $n\inN^+$. Since $q$ is semibasic, $1=q(x)=ne_x(x)=n$ and so $q=e_x$ is the unique element that is semibasic at $x$.
\end{proof}

\begin{oss}\label{oss:cantor}
Let $G$ be a reduced $\ell$-group. Then, the map $\phi:X\longrightarrow\Spec(G)$, $x\mapsto P_x$ is injective, and thus it can be used to endow $X$ with a topology (depending on $G$). It is easy to see that, under this topology, if $x\in X$ is a limit point then $P_x$ is a limit point of $\Spec(G)$, and thus
\begin{equation*}
\phi(\deriv(X))\subseteq\deriv(\Spec(G))\cap\phi(X)=\derivX(\Spec(G)).
\end{equation*}
An inductive argument shows that $\phi(\deriv^\alpha(X))\subseteq\derivX^\alpha(\Spec(G))$ for all $\alpha$. However, the two sets need not to coincide; to show this, we provide an example of a topological space $T$ with a dense set $X$ such that $\deriv^\infty(X)\neq\deriv^\infty(T)\cap X$.

Let $Y$ be the Cantor space in $[0,1]$, and let $Y':=\{(p,0)\mid p\in Y\}\subseteq\insR^2$. For every integer $n\geq 1$, let $X_n$ be the set of all $(x,1/n)\in\insR^2$ such that $x\in[0,1]$ and the $k$-th digit of $x$ in base $3$ is $0$ for every $k\geq n$ (we represent $1$ as $0,2\cdots 2\cdots$). Note that each $X_n$ is a finite set. Let
\begin{equation*}
X:=\{(0,0)\}\cup\bigcup_{n\geq 1}X_n,\quad T=X\cup Y'.
\end{equation*}
By construction, $X$ is dense in $T$ (if the expansion in base $3$ of $p$ is $p=0,c_1c_2\cdots c_n\cdots$, then $(p,0)$ is the limit of $\{(p_k,1/k)\}$, where $p_k=0,c_1c_2\cdots c_k$ in base $3$).

Each element of $X\setminus\{(0,0)\}=T\setminus Y'$ is discrete in $T$, while every point of $Y'$ is a limit point; hence,
\begin{equation*}
Y'=\deriv(T)=\deriv^2(T)=\cdots=\deriv^\infty(T).
\end{equation*}
In particular, $(0,0)\in\deriv^\infty(T)\cap X$. However, $(0,0)$ is the unique limit point of $X$, and thus
\begin{equation*}
\deriv(X)=\{(0,0)\},\quad\deriv^2(X)=\emptyset.
\end{equation*}
Hence $\deriv^\alpha(X)\neq\deriv^\alpha(T)\cap X$ for every $\alpha\geq 2$ and for $\alpha=\infty$.
\end{oss}

We end this section with a lemma that will be useful later.
\begin{lemma}\label{lemma:quark-linindip}
Let $G\subseteq\funct(X,\insZ)$ be an $\ell$-group, and let $Y\subseteq X$ be a nonempty subset. For all $y\in Y$, let $q_y$ be semibasic at $y$. Then, $Q:=\{q_y\mid y\in Y\}$ is a linearly independent subset of $G$.
\end{lemma}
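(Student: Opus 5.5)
Suppose there is a nontrivial relation $\sum_{i=1}^n a_i q_{y_i}=0$, with the $y_i$ distinct points of $Y$ and all coefficients $a_i\neq 0$. I will derive a contradiction by evaluating this relation at a well-chosen point. The natural choice is a point of maximal rank. Among the finitely many ordinals $\cb(y_1),\dots,\cb(y_n)$ there is a largest one; call it $\alpha$. Choose an index $j$ with $\cb(y_j)=\alpha$. Then I evaluate the relation at $y_j$.

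The key step is to show that $q_{y_i}(y_j)=0$ for every $i\neq j$. There are two cases.
\begin{itemize}
\item Suppose $\cb(y_i)=\alpha$. Since $\cb(y_j)=\alpha$, the point $y_j$ lies in $\derivX^{\alpha}(X)=\derivX^{\cb(y_i)}(X)$. By the definition of semibasic, $\supp(q_{y_i})\cap\derivX^{\cb(y_i)}(X)=\{y_i\}$. As $y_j\neq y_i$, we get $y_j\notin\supp(q_{y_i})$.
\item Suppose $\cb(y_i)<\alpha$. Then $\cb(y_i)+1\leq\alpha$, so $y_j\in\derivX^{\alpha}(X)\subseteq\derivX^{\cb(y_i)+1}(X)$. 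By Lemma~\ref{lemma:quark}\ref{lemma:quark:magg}, $\supp(q_{y_i})\cap\derivX^{\cb(y_i)+1}(X)=\emptyset$. Here I only need the first step of that lemma's proof, the inclusion $\supp(q)\cap\derivX^{\cb(x)+1}(X)\subseteq\{x\}$ together with $x\notin\derivX^{\cb(x)+1}(X)$, which holds by definition of the rank; it does not use reducedness. Hence again $q_{y_i}(y_j)=0$.
\end{itemize}

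Evaluating the relation at $y_j$ therefore gives
\begin{equation*}
0=\sum_i a_i q_{y_i}(y_j)=a_j q_{y_j}(y_j)=a_j,
\end{equation*}
which contradicts $a_j\neq 0$. So $Q$ is linearly independent. The same argument shows the $q_y$ are pairwise distinct, so the family is not degenerate.

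The only delicate point is the definition of the rank. Semibasic elements are defined only at points $x$ with a well-defined ordinal $\cb(x)$, and I need $x\in\derivX^{\cb(x)}(X)\setminus\derivX^{\cb(x)+1}(X)$. I should also check that a maximum among finitely many ordinals exists, which is immediate. If some $\cb(y)$ were $\infty$, the definition of semibasic would not make sense in the usual way, so I will assume the ranks are ordinals, as is implicit in the statement.
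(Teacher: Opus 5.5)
Your proof is correct and follows the paper's argument. You pick a point $y_j$ of maximal rank among the $y_i$, note that every other $q_{y_i}$ vanishes there, and evaluate the relation at $y_j$ to force its coefficient to be $0$; the paper repeats this step to kill every coefficient, while you argue by contradiction assuming all coefficients are nonzero, which is equivalent. Your two-case check of $q_{y_i}(y_j)=0$ spells out a step the paper only asserts: for equal ranks you use the definition of semibasic, and for smaller ranks Lemma~\ref{lemma:quark}\ref{lemma:quark:magg}, whose proof indeed does not need reducedness.
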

\begin{proof}
Suppose that $n_1q_{y_1}+\cdots+n_kq_{y_k}=0$, where $y_1,\ldots,y_k\in Y$ (with $y_i\neq y_j$ if $i\neq j$) and $n_1,\ldots,n_k\inZ$. Let $\gamma:=\sup\{\cb(y_i)\mid i=1,\ldots,k\}$, and suppose without loss of generality that $\cb(y_1)=\gamma$. Then, $q_{y_i}(y_1)=0$ for all $i>1$, and thus 
\begin{equation*}
0=(n_1q_{y_1}+\cdots+n_kq_{y_k})(y_1)=n_1q_{y_1}(y_1)=n_1,
\end{equation*}
so that $n_1=0$. Repeating the process we obtain that $n_k=0$ for all $k$, and thus $Q$ is a linearly independent set.
\end{proof}

\section{Basic limit groups}
Let $G\subseteq\funct(X,\insZ)$ be an $\ell$-group. The set $\Spec^\infty(G)$ of the infinite primes can be very large and thus very difficult to analyze. For this reason, the main focus of this paper will be in cases where this space is small enough to be controlled: the main idea is to use the derived sequence to ``peel off'' the layers of isolated points, and this method will require that $\Spec^\infty(G)$ is countable and disjoint from the perfect hull of $\Spec(G)$.

When $\Spec^\infty(G)$ is empty, we can prove freeness by appealing to the ring case.
\begin{prop}
Let $G\subseteq\funct(X,\insZ)$ be an $\ell$-group. If $\Spec^\infty(G)=\emptyset$, then $G$ is free.
\end{prop}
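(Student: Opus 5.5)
The plan is to transfer the problem to a B\'ezout domain, show that domain is almost Dedekind, and then invoke the known freeness of $\Inv(D)$ for almost Dedekind domains. By Section \ref{sect:prufer-ell}, there is a B\'ezout domain $D$ with an $\ell$-group isomorphism $\Inv(D)\simeq G$. Let $\Psi:\Spec(D)\longrightarrow\Spec(G)$, $P\mapsto H_P$, be the induced order-reversing bijection. It sends the nonzero primes of $D$ onto $\Specast(G)$ and satisfies $\Inv(D_P)\simeq G/H_P$. Freeness is a property of the abstract group $\Inv(D)\simeq G$, so it suffices to prove that $\Inv(D)$ is free.

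First step: every nonzero prime $P$ of $D$ is discrete. Since $\Spec^\infty(G)=\emptyset$, every element of $\Specast(G)$ is a finite prime. Hence $\Psi(P)=P_x$ for some $x\in X$ with $P_x\neq G$. By Proposition \ref{prop:finite}\ref{prop:finite:residue}, $G/P_x\simeq\insZ$, so the value group of the valuation domain $D_P$ is $\insZ$, and $D_P$ is a DVR.

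Second step: $D$ is one-dimensional. If $(0)\neq Q\subsetneq P$ were a chain of primes, then $QD_P$ would be a nonzero prime of $D_P$ properly contained in $PD_P$. This is impossible, since $D_P$ is a DVR and so has dimension one. Hence every nonzero prime is maximal, and $D$ is a one-dimensional Pr\"ufer domain whose localizations at all maximal ideals are DVRs, that is, an almost Dedekind domain. (If $D$ is a field, then $G=0$ and there is nothing to prove.)

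Finally, we apply the result recalled in the introduction from \cite{SP-scattered,bounded-almded}: the group of invertible ideals of an almost Dedekind domain is always free. Therefore $\Inv(D)$ is free, and hence so is $G$. I do not expect a real obstacle here. The only point needing care is that $G$ is not assumed to be one-dimensional, so one-dimensionality of $D$ has to be derived from the discreteness of all localizations, as in the second step, rather than taken from the hypotheses.
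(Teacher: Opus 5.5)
Your proposal is correct and is essentially the paper's own proof. You realize $G$ as $\Inv(D)$ for a B\'ezout domain $D$, note that every prime in $\Specast(G)$ is finite so all residue groups are $\insZ$, conclude that every localization is a DVR and $D$ is almost Dedekind, and then invoke the known freeness of $\Inv(D)$ for almost Dedekind domains. Your separate one-dimensionality step is harmless but not needed: with the paper's definition (each $D_M$, $M$ maximal, is a DVR), your first step already suffices.
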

\begin{proof}
Let $D$ be a B\'ezout domain such that $\Inv(D)\simeq G$. If $\Spec^\infty(G)$ is empty, then all residue groups of $G$ are isomorphic to $\insZ$, and thus all value groups of $D$ are discrete; it follows that $D$ is an almost Dedekind domain. By \cite[Proposition 5.3]{bounded-almded}, $\Inv(D)$ is free.
\end{proof}

Thus, we only need to consider the case in which $\Spec^\infty(G)$ is nonempty; in the following, we shall always silently assume this hypothesis. We will also only consider one-dimensional groups: in this case, $\Specast(G)=\Spec(G)\setminus\{G\}$ is the set of minimal elements of $\Spec(G)$, and since $G$ is a generic point of $\Spec(G)$ (i.e., it is contained in every nonempty open set) a subset of $\Specast(G)$ is dense in $\Specast(G)$ if and only if it is dense in $\Spec(G)$. Furthermore, in this case $\Specast(G)$ is a zero-dimensional Hausdorff space.

The next step is to consider $\ell$-groups which have a single prime at infinity; this case will be the focus of this and the next two sections. To simplify the discussion, we introduce the following definition.
\begin{defin}
Let $X$ be a nonempty set and $G\subseteq\funct(X,\insZ)$ be an $\ell$-group. We say that $G$ is a \emph{basic limit group} if $\Spec^\infty(G)$ is a singleton. 

In this case, if $P$ is its prime at infinity, we call the residue group $G/P$ the \emph{residue group at infinity} of $G$, and we denote it by $\residuo_\infty(G)$. We denote by $\pi_\infty$ the canonical surjective homomorphism
\begin{equation*}
\pi_\infty\colon G\longrightarrow\residuo_\infty(G).
\end{equation*}
\end{defin}

\begin{oss}\label{oss:basic}
~\begin{enumerate}[(1)]
\item Remark \ref{oss:separaz-spec} shows that being a basic limit group is not an abstract property of $G$, but it depends by its realization inside $\funct(X,\insZ)$.
\item\label{oss:basic:deriv} Let $G$ be a basic limit group with infinite prime $P$, and let $F$ be the set of finite primes of $G$. Then, $P$ is a limit point of $\Specast(G)$ and so $\deriv(\Specast(G))=\deriv(F)\cup\{P\}$; applying again the derived set construction we obtain that $\deriv^2(\Specast(G))$ will be equal to $\deriv^2(F)$ or to $\deriv^2(F)\cup\{P\}$. By induction, it follows that $\deriv^\alpha(\Specast(G))$ will be equal to $\deriv^\alpha(F)$ or to $\deriv^\alpha(F)\cup\{P\}$. In particular, the phenomenon shown in Remark \ref{oss:cantor} cannot happen: we always have $\phi(\deriv^\alpha(X))=\deriv^\alpha(\Specast(G))\cap\phi(X)=\derivX^\alpha(X)$. Therefore, for every $A\subseteq X$, we have $\cb(A)=\cb(\phi(A))$.
\end{enumerate}
\end{oss}

The aim of this and of the following two sections is to show that a basic limit group is free under some hypothesis on its spectrum and on its residue group at infinity. For this reason, we will distinguish two cases, according to whether the Cantor-Bendixson rank of the prime at infinity is a successor or a limit ordinal, which we consider respectively in Section \ref{sect:successor} and in Section \ref{sect:limit}.

In this section, we prove a few results that allow to impose further restrictions on $\Specast(G)$ and $P$ without losing generality.

\begin{lemma}\label{lemma:restriction-freekernel}
Let $G\subseteq\funct(X,\insZ)$ be a reduced basic limit group with prime at infinity $P$. Let $C$ be a closed subset of $\Specast(G)$ containing $P$. Then, the restriction map $\phi:G\longrightarrow\funct(C,\insZ)$ has free kernel.
\end{lemma}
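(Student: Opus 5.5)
The plan is to show that the kernel $K:=\ker\phi$ is itself an $\ell$-group of functions with no primes at infinity, and then apply the proposition stating that such groups are free. Here $\phi$ is read as restriction to the finite primes lying in $C$, i.e. to $\{x\in X\mid P_x\in C\}$.

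First, $K$ is the kernel of an $\ell$-group homomorphism (restriction commutes with $\wedge$ and $\vee$ pointwise). Hence $K$ is a convex $\ell$-subgroup of $G$, and $K\subseteq\funct(X,\insZ)$ is an $\ell$-group in its own right. For $f\in K$ we have $f^+,f^-\in K$, so it suffices to understand the positive elements. Take $f\in K^+$. Then $f(x)=0$ for every $x$ with $P_x\in C$, so $\D(f)\cap C$ contains no finite prime. Since $P$ is the only infinite prime, $\D(f)\cap C\subseteq\{P\}$.

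The key step is to show that $P\notin\D(f)$, i.e. $f\in P$, so that $\D(f)\cap C=\emptyset$. If $P$ is not isolated in $C$, this is immediate: $\D(f)\cap C$ is open in $C$, and if it contained $P$ it would contain other points of $C$, necessarily finite primes, a contradiction. If $P$ is isolated in $C$, this argument fails. In that case I would instead use the structure of $\Specast(G)$: it is a zero-dimensional Hausdorff space in which the compact open sets $\D(f)$ are clopen. I would then try to see the statement as concerning the restriction to $C$ including the value at $P$ (via $\pi_\infty$), or reduce to $C\setminus\{P\}$ plus a separate handling of the single point $P$. I expect this degenerate case to be the main obstacle, and I would try the reduction just described first.

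Granting $\D(f)\cap C=\emptyset$ for all $f\in K^+$, every element of $K^+$ lies in $P$. The primes of the $\ell$-group $K$ are then of the form $Q\cap K$ with $Q\in\bigcup_{f\in K^+}\D(f)$, and all such $Q$ avoid $P$, hence are finite primes $P_x$. Realizing $K$ inside $\funct(X',\insZ)$ with $X':=\bigcup_{f\in K}\supp(f)$, each prime of $K$ is the kernel of evaluation at some $x\in X'$. Thus $\Spec^\infty(K)=\emptyset$, and by the proposition preceding the definition of basic limit groups (which reduces to the almost Dedekind case), $K$ is free.
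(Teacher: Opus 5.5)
Your argument is correct in the case that actually matters ($P$ not isolated in $C$; see below), and it takes a genuinely different route from the paper.

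\textbf{Comparison with the paper.} The paper works ring-theoretically. It realizes $G$ as $\Inv(D)$ for a B\'ezout domain $D$ and passes to the overring $T=\bigcap_{Q\in C'}D_Q$. It observes that every maximal ideal $A$ with $AT=T$ corresponds to a finite prime and is therefore discrete. It then quotes \cite[Theorem 4.10]{overring-operators} for the freeness of $\ker(\Inv(D)\to\Inv(T))$, which it identifies with $\ker\phi$.

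You stay inside $G$ instead. $K=\ker\phi$ is an $\ell$-ideal, and its proper primes are the traces $Q\cap K$ of primes $Q$ of $G$ not containing $K$. None of these $Q$ is $P$, hence $\Spec^\infty(K)=\emptyset$, and $K$ is free by the proposition opening the section on basic limit groups, i.e.\ ultimately by \cite[Proposition 5.3]{bounded-almded}. In ring terms you are showing that the kernel is itself the group of invertible ideals of an almost Dedekind domain. This replaces the external overring theorem by a result the paper already relies on, and it makes transparent why the kernel is free. The paper's proof is shorter given that black box, and it places the lemma inside a general statement about overrings.

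The step you should justify explicitly is the trace correspondence. If $K$ is a convex $\ell$-subgroup of $G$ and $Q'$ is a proper prime of $K$, then $Q:=\{g\in G\mid |g|\wedge k\in Q'\text{ for all }k\in K^+\}$ is a prime of $G$ with $Q\cap K=Q'$ and $K\not\subseteq Q$ (standard, cf.\ \cite{darnel-lgroups}).

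\textbf{The isolated case.} Your concern about $P$ isolated in $C$ is justified, but it is a defect of the statement rather than of your method. The proposed reduction to $C\setminus\{P\}$ cannot help: it does not even change $\ker\phi$, since $P$ is not a finite prime.

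If $P$ is isolated in $C$, pick $f\in G^+$ with $\D(f)\cap C=\{P\}$. Then $f\in\ker\phi\setminus P$, so $\ker\phi$ is again a basic limit group with infinite prime $P\cap\ker\phi$. For $C=\{P\}$ one gets $\ker\phi=G$, and the literal statement would then say that every reduced basic limit group is free. That would make the hypotheses of Theorem~\ref{teor:basic} superfluous.

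What the paper's proof really computes is $\{f\in G\mid f\in Q\text{ for all }Q\in C\}$. This is your first reading: vanishing at every $x$ with $P_x\in C$ and $\pi_\infty(f)=0$. Under that reading your key step is automatic. The two readings coincide exactly when $P$ is a limit point of $C$, and this holds in every use of the lemma:
\begin{itemize}
\item a clopen neighbourhood of $P$;
\item its intersection with $\deriv^\beta(\Specast(G))$ when $\cb(P)=\beta+1$;
\item a sequence of finite primes together with its limit $P$.
\end{itemize}
So state that hypothesis, or adopt that reading, and drop the alternative.
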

\begin{proof}
Let $D$ be a B\'ezout domain such that $\Inv(D)\simeq G$, and let $C'\subseteq\Max(D)$ the closed set corresponding to $C$. Let $T:=\bigcap\{D_Q\mid Q\in C'\}$. If $A\in\Max(D)$ is a prime ideal such that $AT=T$, then $A$ correspond to a finite prime of $G$, and thus its value group is $\insZ$; hence, $PT=T$. By \cite[Theorem 4.10]{overring-operators}, it follows that the kernel of $\Inv(D)\longrightarrow\Inv(T)$ is free. However, this kernel corresponds to the kernel of $\phi$ throught the commutative diagram
\begin{equation*}
\begin{CD}
G @>{\phi}>> \funct(C,\insZ)\\
@VVV    @VVV\\
\Inv(D) @>>> \Inv(T).
\end{CD}
\end{equation*}
Therefore, $\ker\phi$ is free.
\end{proof}

A topological space $X$ is \emph{Fr\'echet-Urysohn} is the closure of every subspace is equal to its sequential closure, i.e., if $S\subseteq X$ and $s$ belongs to the closure of $S$ then there is a sequence $\{s_n\}_{n=1}^\infty\subseteq S$ such that $s$ is a limit of $\{s_n\}_{n=1}^\infty$. This property is very general; for example, every first-countable space is Fr\'echet-Urysohn.
\begin{prop}\label{prop:reduction-maxcb}
Let $G\subseteq\funct(X,\insZ)$ be a reduced one-dimensional basic limit group with prime at infinity $P\notin\deriv^\infty(\Specast(G))$. Then, we can find a set $X'$ and a reduced basic limit group $G'\subseteq\funct(X',\insZ)$ with prime at infinity $P'$ such that:
\begin{itemize}
\item $\cb(P')=\cb(\Specast(G'))$;
\item $\cb(P'')<\cb(\Specast(G')$ for every prime $P''\neq P'$ of $G'$;
\item there is a surjective map $\pi:G\longrightarrow G'$ with free kernel.
\end{itemize}
Moreover, if $\cb(P)$ is a successor ordinal, we can choose $G'$ such that $\cb(P')=1$; if also $\Specast(G)$ is Fr\'echet-Urysohn, we can take $X'$ to be countable.
\end{prop}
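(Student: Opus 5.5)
Let $\alpha:=\cb(P)$. Since $P\notin\deriv^\infty(\Specast(G))$, $P$ is an isolated point of $\deriv^\alpha(\Specast(G))$, so there is a basic open set $\D(f)$, with $f\in G^+$, such that $\D(f)\cap\deriv^\alpha(\Specast(G))=\{P\}$. Because $G$ is one-dimensional, $\D(f)\cap\Specast(G)$ is clopen in $\Specast(G)$, by the correspondence with $\Max(D)$ for a B\'ezout domain $D$ with $\Inv(D)\simeq G$. We therefore take $C:=\D(f)\cap\Specast(G)$, which is closed and contains $P$. The candidate is $G':=\phi(G)$, where $\phi\colon G\longrightarrow\funct(C\cap\phi_X(X),\insZ)$ is restriction to the finite primes inside $C$, and $X':=\{x\in X\mid P_x\in C\}$. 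By Lemma \ref{lemma:restriction-freekernel}, $\ker\phi$ is free, which gives the third bullet.

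Next I would check that $G'$ has the required properties. On the ring side, $G'$ is the group $\Inv(T)$ with $T=\bigcap_{Q\in C'}D_Q$. Since $C'$ is clopen in $\Max(D)$ and $D$ is Pr\"ufer, $T$ should be B\'ezout-like with $\Max(T)\cong C'$. Hence $\Specast(G')$ is homeomorphic to $C$, and $X'$ is dense in it, so $G'$ is reduced. The only non-discrete point is $P'$ (the image of $P$), so $G'$ is a basic limit group. Since $C$ is open in $\Specast(G)$, Cantor--Bendixson ranks computed in $C$ agree with those in $\Specast(G)$, and Remark \ref{oss:basic}\ref{oss:basic:deriv} lets us pass freely between $X'$ and the primes. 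Thus $\cb(P')=\alpha$, and every other point of $C$ lies outside $\deriv^\alpha$, so it has rank $<\alpha$. This gives the first two bullets. One point to verify carefully is the surjectivity of $\Inv(D)\to\Inv(T)$ and the identification of its image with $\phi(G)$. Both follow because $T$ is an overring of a Pr\"ufer domain, so every invertible ideal of $T$ is extended.

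For the successor case $\alpha=\beta+1$, I would iterate the construction to lower the rank to $1$. The set $Z:=\deriv^\beta(\Specast(G'))\cap C\setminus\{P'\}$ consists of finite primes of rank exactly $\beta$, and each is isolated in $\deriv^\beta$. Collapsing all lower-rank layers should be achieved by restricting to the closed set $\overline{Z}=Z\cup\{P'\}$, whose points other than $P'$ are isolated, so that $\cb(P')=1$ there. The obstacle is that Lemma \ref{lemma:restriction-freekernel} only gives a free kernel and a surjection onto a subgroup of $\funct(Z,\insZ)$. I still need this image to be reduced, with finite primes exactly the points of $Z$. For that I would use semibasic elements (Lemma \ref{lemma:quark}\ref{lemma:quark:exists}): for $z\in Z$, the semibasic $q_z$ restricts to $e_z$ on $Z$. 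So all $e_z$ lie in the image, points of $Z$ are isolated, and the image is reduced. Composing the two surjections, the kernel is an extension of free by free, hence free by projectivity.

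Finally, in the Fr\'echet--Urysohn case I would pick a sequence $\{z_n\}\subseteq Z$ converging to $P'$ and restrict further to the closed set $\{z_n\}\cup\{P'\}$. This set is closed because the space is Hausdorff and the limit is unique. Applying the same lemma again gives a free kernel, and $X'=\{z_n\}$ is countable.

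The main obstacle I expect is justifying the spectral claims (reducedness, and that $P'$ is still the unique infinite prime) through the ring translation for each restriction.
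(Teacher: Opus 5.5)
Your proposal is correct and follows the paper's proof. You isolate $P$ in $\deriv^{\cb(P)}(\Specast(G))$ by a clopen set and restrict to it via Lemma \ref{lemma:restriction-freekernel}; in the successor case you restrict further to the layer $\deriv^\beta$, and under the Fr\'echet--Urysohn hypothesis to a sequence in that layer converging to $P$.

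Your extra checks only make explicit what the paper compresses into ``another application of the lemma''. These are the semibasic elements restricting to the $e_z$, composing the two free kernels, and choosing the sequence inside $Z$ rather than intersecting an arbitrary sequence with $X''$. One phrase needs fixing: $P'$ is the unique infinite prime of $G'$ because $C\setminus\{P\}$ consists of finite primes of $G$. It is not because $P'$ is the only non-discrete point, since an infinite prime may still have residue group isomorphic to the integers.
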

\begin{proof}
Since $P\notin\deriv^\infty(\Specast(G))$, there is an open subset $O$ of $\Specast(G)$ such that $O\cap\deriv^{\cb(P)}(\Specast(G))=\{P\}$; since $G$ is one-dimensional, we can also suppose that $O$ is clopen. The claim now follows taking $X':=\{x\in X\mid P_x\in O\}$, setting $G'$ to be the image of the restriction map $G\longrightarrow\funct(X',G)$ and applying Lemma \ref{lemma:restriction-freekernel}.

If $\cb(P)=\beta+1$ is a successor ordinal, let $X'$ be as above and take $X'':=X'\cap\deriv^\beta(X)$. If also $\Specast(G)$ is Fr\'echet-Urysohn, there is a countable sequence $\{P_{x_n}\}$ of finite primes with limit $P$; thus we take $X''':=X''\cap\{x_n\mid n\inN\}$. In both cases another application of Lemma \ref{lemma:restriction-freekernel} gives the desired conclusion.
\end{proof}

\begin{lemma}\label{lemma:spanqx}
Let $G\subseteq\funct(X,\insZ)$ be a one-dimensional basic limit group, and let $\beta$ be an ordinal number. For every $x\in\derivX^\beta(X)$, let $q_x$ be semibasic at $x$. If $f\in \ker\pi_\infty$ and $\cb(f)\leq\beta$, then $f\in\langle q_x\mid x\in\derivX^\beta(X)\rangle$.
\end{lemma}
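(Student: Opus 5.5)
Throughout I read the spanning set as running over semibasic elements $q_x$ chosen at every $x$ with $\cb(x)\leq\beta$. Read literally, with $x\in\derivX^\beta(X)$ meaning $\cb(x)\geq\beta$, the statement cannot hold for $\beta>0$. Indeed, if $y$ is a finite prime with $\cb(y)=0$, then $e_y\in G$ (it is semibasic at $y$ by Lemma \ref{lemma:quark}), $e_y\in P$ by Lemma \ref{lemma:supp-primeinfty}, and $\cb(e_y)=0\leq\beta$. But every $q_x$ with $\cb(x)\geq 1$ vanishes at the isolated point $y$, so $e_y$ is not in their span. Since the reading of the index set is my own correction, I would check it against the way the lemma is used later. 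With this reading, the plan is a transfinite induction on $\gamma:=\cb(f)\leq\beta$, showing that $f\in\langle q_x\mid\cb(x)\leq\gamma\rangle$.

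\emph{Reduction to positive elements.} Since $\ker\pi_\infty=P$ is an ideal, hence an $\ell$-subgroup, $f\in P$ gives $f^+,-f^-\in P$. By definition $\cb(f)=\max(\cb(f^+),\cb(f^-))$, so it suffices to treat $f\in P\cap G^+$.

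\emph{The top layer.} Because $G$ is one-dimensional, $\D(f)\cap\Specast(G)$ is clopen and compact in the Hausdorff zero-dimensional space $\Specast(G)$. Hence $\D(f)\cap\deriv^\gamma(\Specast(G))$ is compact. All of its points have rank exactly $\gamma$, so each is isolated in $\deriv^\gamma$, and the set is finite. It does not contain $P$, since $f\in P$. By Remark \ref{oss:basic}\ref{oss:basic:deriv} it therefore consists of finitely many finite primes $P_{x_1},\dots,P_{x_k}$ with $\cb(x_i)=\gamma$. Set
\begin{equation*}
g:=f-\sum_{i=1}^k f(x_i)\,q_{x_i}.
\end{equation*}
Now check $g$ point by point.
\begin{itemize}
\item At each $x_i$: $q_{x_j}(x_i)=0$ for $j\neq i$ by semibasicity, so $g(x_i)=0$.
\item At any other $x$ with $\cb(x)\geq\gamma$: $f(x)=0$, and every $q_{x_i}(x)=0$ by semibasicity together with Lemma \ref{lemma:quark}\ref{lemma:quark:magg}.
\end{itemize}
Moreover $\D(g^\pm)\subseteq\D(f)\cup\bigcup_i\D(q_{x_i})$. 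So every finite prime in $\D(g^\pm)$ has rank $<\gamma$.

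\emph{Key step: $q_{x_i}\in P$.} This is where I expect the real difficulty. If $g\in P$, then $P\notin\D(g^\pm)$, so $\cb(g)<\gamma$. The induction hypothesis then puts $g$, and hence $f$, in the span (for $\gamma=0$, $g$ has empty support, so $g=0$). Since $f\in P$, it is enough to show $q_x\in P$ whenever $\cb(x)\leq\cb(P)$. Suppose instead $P\in\D(q_x)$ with $\cb(P)\geq\cb(x)$. The clopen set $\D(q_x)$ then contains $P$, and $P$ is a limit of finite primes. Semibasicity forces all finite primes of $\D(q_x)$ other than $P_x$ to have rank $<\cb(x)$. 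I would try to show this is incompatible with $P\in\deriv^{\cb(x)}$, using Remark \ref{oss:basic}\ref{oss:basic:deriv} to compare ranks computed in $X$ and in $\Specast(G)$. If that fails, I would use bounded ratio (Corollary \ref{cor:1dim->br}) to compare $q_x$ with an element of $P$ having a nearby support. In the remaining case $\cb(P)<\cb(x)$, $P$ lies outside the relevant derived set, so the rank of $g$ is governed by finite primes only and the same induction applies.
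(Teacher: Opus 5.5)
\textbf{Where your proposal and the paper agree.} Your skeleton is the paper's: induction on $\gamma=\cb(f)$, finiteness of the top layer $\D(f)\cap\deriv^\gamma(\Specast(G))$ by compactness, subtraction of $\sum_i f(x_i)q_{x_i}$, and the drop in rank. Your reading of the index set as $\{x\mid\cb(x)\leq\beta\}$ is what the paper's proof actually establishes, and it is how the lemma is used afterwards.

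Your reason for rejecting the literal reading is wrong, though. A semibasic $q_x$ at a non-isolated $x$ need not vanish at isolated points. What excludes $e_y$ is the triangular evaluation argument of Lemma \ref{lemma:quark-linindip}.

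\textbf{The gap: the step you flag yourself is false.} You need $\pi_\infty(q_{x_i})=0$. This does not follow from semibasicity, which only constrains the finite points of rank $\geq\cb(x)$ in $\supp(q_x)$ and says nothing about $P$. So both of your proposed routes aim at a false statement.

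A counterexample:
\begin{itemize}
\item Let $K=\{x,\infty\}\cup\{y_n,z_n\mid n\geq 1\}$ with $y_n\to x$, $z_n\to\infty$ and all $y_n,z_n$ isolated. Let $X=K\setminus\{\infty\}$ and let $G$ be the group of restrictions to $X$ of the continuous maps $K\to\insZ$.
\item Then $G$ is a one-dimensional basic limit group with $\Specast(G)\cong K$, and $P$ corresponds to $\infty$. Moreover $\cb(x)=\cb(P)=1$ and $\derivX^1(X)=\{x\}$.
\item Hence the constant function $\mathbf{1}$ is semibasic at $x$, yet $\pi_\infty(\mathbf{1})\neq 0$.
\item Take $q_x=\mathbf{1}$, $q_{y_n}=e_{y_n}$, $q_{z_n}=e_{z_n}$. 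The element $f=\chi_{\{x\}\cup\{y_n\mid n\geq 1\}}$ lies in $P$ and has $\cb(f)=1$.
\item But $f=c\mathbf{1}+(\text{finitely supported})$ forces $c=1$ along the $y_n$ and $c=0$ along the $z_n$.
\end{itemize}
So the lemma fails for an arbitrary choice of semibasic elements. The paper's proof passes over exactly this point with ``by construction, $\pi_\infty(f_1)=\pi_\infty(f)=0$''.

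Your ``remaining case'' $\cb(P)<\cb(x)$ fails for the same reason. If $q_{x_i}\notin P$, then $g\notin\ker\pi_\infty$, and your induction hypothesis says nothing about such elements. The analogous example with $\cb(x)=2$, $\cb(P)=1$ and $q_x=\mathbf{1}$ breaks it.

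\textbf{The missing idea: choose the $q_x$ inside $\ker\pi_\infty$.} This is always possible:
\begin{itemize}
\item Given $q$ semibasic at $x$, pick $h\in G^+$ with $P_x\in\D(h)$ and $P\notin\D(h)$. This is possible because $\Specast(G)$ is Hausdorff and the sets $\D(h)$ form a base.
\item Then $h\in P$, $(q\wedge h)(x)=1$ and $\supp(q\wedge h)\subseteq\supp(q)$.
\item So $q\wedge h$ is semibasic at $x$, and it lies in $P$ by convexity.
\end{itemize}

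With this extra hypothesis your $g$ lies in $P$. Then $\cb(g)<\gamma$, because ranks are attained on the compact sets $\D(g^{\pm})$, and your induction closes. It treats limit $\gamma$ uniformly, where the paper splits off the limit case.

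The same requirement is needed downstream. For example, in Corollary \ref{cor:Specast-scattered-freeker} the family $\mathbf{q}$ must lie in $\ker\pi_\infty$ in order to generate it.
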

\begin{proof}
We note that, by Lemma \ref{lemma:quark}\ref{lemma:quark:exists}, all such elements $q_x$ exist.

Since $f\in\ker\pi_\infty$, then $\D(f)=\{P_x\mid x\in\supp(f)\}$; in particular, $\supp(f)\simeq\D(f)$ is compact. We proceed by induction on $\beta$.

If $\beta=0$, then $\supp(f)$ must be discrete and thus finite, say $\supp(f)=\{x_1,\ldots,x_n\}$. Then, $q_{x_i}=e_{x_i}$ for all $i$ and $f=f(x_1)e_{x_1}+\cdots+f(x_n)e_{x_n}$ is the desired decomposition.

Suppose the claim holds for every $\alpha<\beta$. If $\beta$ is a limit ordinal, then $\cb(\supp(f))<\beta$ (since $\supp(f)$ is compact) and the claim holds by induction. Suppose that $\beta=\alpha+1$ is a limit ordinal. The space $\supp(f)\cap\derivX^\beta(X)$ is closed and compact in $\derivX^\beta(X)$, and thus it is finite, say equal to $x_1,\ldots,x_n$. For each $i$ and each $t\in\derivX^\beta(X)$, we have $q_{x_i}(t)=0$ unless $t=x_i$, in which case $q_{x_i}(x_i)=1$; thus, the function
\begin{equation*}
f_1:=f-\sum_{i=1}^nf(x_i)q_{x_i}\in G
\end{equation*}
is $0$ for every $t\in\derivX^\beta(X)$. By construction, $\pi_\infty(f_1)=\pi_\infty(f)=0$ and $f_1(t)=0$ for all $t\in\derivX^\beta(X)$; hence $\cb(f_1)<\beta$. By induction, $f_1$ can we written as a sum of semibasic elements $q_y$ with $\cb(y)\leq\alpha$; hence $f$ can be written as a sum of semibasic elements $q_y$ with $\cb(y)\leq\beta$. The claim is proved.
\end{proof}

\begin{cor}\label{cor:Specast-scattered-freeker}
Let $G\subseteq\funct(X,\insZ)$ be a one-dimensional basic limit group and suppose that $\Specast(G)$ is scattered. Then, $\ker\pi_\infty$ is free. In particular, if $\kappa_\infty(G)$ is free then $G$ is free.
\end{cor}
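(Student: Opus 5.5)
Since $\Specast(G)$ is scattered, every $x\in X$ lies outside $\derivX^\infty(X)$. By Lemma \ref{lemma:quark}\ref{lemma:quark:exists}, we can therefore fix, for each $x\in X$, an element $q_x\in G^+$ that is semibasic at $x$. (If $G$ is not already reduced, we first replace it by an isomorphic reduced group via Proposition \ref{prop:reduction-reduced}. Since the conclusion concerns the abstract group $\ker\pi_\infty$, this is harmless once we check that the prime at infinity still corresponds. If this check fails, we instead work directly with semibasic elements, which only need the existence of $f_x\in G^+$ with $f_x(x)=1$.) The plan is to show that $Q:=\{q_x\mid x\in X\}$ is a basis of $\ker\pi_\infty$.

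First, each $q_x$ lies in $\ker\pi_\infty=P$. The support of $q_x$ is compact, since $\D(q_x)$ is clopen in the compact space $\Specast(G)$. It meets $\derivX^{\cb(x)+1}(X)=\emptyset$ by Lemma \ref{lemma:quark}\ref{lemma:quark:magg}, so $\cb(q_x)\le\cb(x)$ at finite primes. If $P\in\D(q_x)$, we argue as follows. By Remark \ref{oss:basic}\ref{oss:basic:deriv}, $P$ is a limit of finite primes, and $\D(q_x)$ would then be an open neighbourhood of $P$ containing finite primes of arbitrarily high rank near $P$. More precisely, if $\cb(P)=\gamma$ then $P\in\deriv^\gamma$ of the finite part closure, so $\D(q_x)\cap\derivX^{\delta}(X)\ne\emptyset$ for all $\delta<\gamma$. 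We compare this with $\supp(q_x)\cap\derivX^{\cb(x)+1}(X)=\emptyset$ and $\supp(q_x)\cap\derivX^{\cb(x)}(X)=\{x\}$. If $\gamma>\cb(x)+1$ we get a contradiction directly. If $\gamma\le\cb(x)+1$, then $P$ would be a limit of points of rank $\ge\cb(x)$ in $\D(q_x)$, but only $x$ is available, a contradiction. Hence $q_x\in P=\ker\pi_\infty$. I expect this step, making the topological bookkeeping at the infinite prime rigorous, to be the main point needing care.

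Second, $Q$ is linearly independent by Lemma \ref{lemma:quark-linindip}. Third, $Q$ spans $\ker\pi_\infty$. Given $f\in\ker\pi_\infty$, split $f=f^++f^-$; both parts lie in the convex subgroup $P$. Since $\Specast(G)$ is scattered and $\D(f^\pm)$ is compact, $\cb(f)$ is some ordinal $\beta$ (a maximum over a compact set). Lemma \ref{lemma:spanqx} then gives $f\in\langle q_x\mid x\in\derivX^\beta(X)\rangle\subseteq\langle Q\rangle$. Thus $\ker\pi_\infty$ is free with basis $Q$.

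Finally, for the last assertion, consider the exact sequence $0\to\ker\pi_\infty\to G\xrightarrow{\pi_\infty}\residuo_\infty(G)\to0$. If $\residuo_\infty(G)$ is free, the sequence splits by projectivity, so $G\simeq\ker\pi_\infty\oplus\residuo_\infty(G)$ is a direct sum of free groups and hence free.
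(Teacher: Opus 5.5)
Your overall strategy is exactly the paper's: semibasic elements, independence by Lemma \ref{lemma:quark-linindip}, spanning by Lemma \ref{lemma:spanqx}, then splitting off the free quotient. You are also right that one needs $q_x\in\ker\pi_\infty$. The induction in Lemma \ref{lemma:spanqx} uses $\pi_\infty(q_{x_i})=0$ to get $\pi_\infty(f_1)=\pi_\infty(f)$, a point the paper's proof passes over silently.

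\textbf{The gap.} Your argument that \emph{every} semibasic element lies in $P$ is wrong, and the claim itself is false. Your case analysis only yields a contradiction when $\cb(P)\geq\cb(x)+1$; then $\D(q_x)$ would contain infinitely many finite primes of rank $\geq\cb(x)$. If $\cb(P)\leq\cb(x)$, then $P$ is only a limit of points of rank $\geq\delta$ for $\delta<\cb(P)$, and the semibasic condition does not constrain those points.

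\textbf{Counterexample.} Take $X=\{x\}\cup A\cup B$ with $A=\{a_n\}_{n\in\insN}$, $B=\{b_n\}_{n\in\insN}$, and $G:=\langle e_y\mid y\in A\cup B\rangle+\insZ\mathbf{1}_{\{x\}\cup B}+\insZ\mathbf{1}_A$. Equivalently, $G$ consists of the functions that are eventually equal to $f(x)$ on $B$ and eventually constant on $A$.
\begin{enumerate}[(1)]
\item $G$ is a one-dimensional (even reduced) $\ell$-group.
\item Its proper primes are the $P_y$ and $P=\{f\mid f(a_n)=0\text{ for }n\gg 0\}$, which is the only prime at infinity.
\item $P_{b_n}\to P_x$ and $P_{a_n}\to P$, so $\Specast(G)$ is scattered with $\cb(x)=\cb(P)=1$.
\item Since $\derivX^1(X)=\{x\}$, every $q\in G^+$ with $q(x)=1$ is semibasic at $x$. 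For instance $\mathbf{1}_X$ is semibasic at $x$, but $\mathbf{1}_X\notin P$.
\end{enumerate}
With that choice your third step fails as well: $\langle e_y,\mathbf{1}_X\mid y\in A\cup B\rangle$ does not contain $\mathbf{1}_{\{x\}\cup B}\in\ker\pi_\infty$.

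\textbf{The fix.} What is needed is a suitable \emph{choice} of the $q_x$. Since $G$ is one-dimensional, $P$ and $P_x$ are incomparable, so there is $h\in P\setminus P_x$. Replacing $h$ by $|h|$, you may take $h\in P^+$ with $h(x)\geq 1$. Then $q_x\wedge h$ still has value $1$ at $x$ and support contained in $\supp(q_x)$, so it is semibasic at $x$. It also lies in $P$ by convexity. With all $q_x$ chosen this way, the rest of your argument goes through exactly as in the paper.

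\textbf{Minor point on reducedness.} Passing to a reduced group is harmless if you take as discrete core precisely the primes of the B\'ezout domain that correspond to the finite primes of $G$. Then the prime at infinity is preserved. Your fallback would not work as stated, since a non-reduced $G$ need not contain any $f$ with $f(x)=1$.
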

\begin{proof}
For every $x\in X$, let $q_x$ be semibasic at $x$; such elements exists by Lemma \ref{lemma:quark}\ref{lemma:quark:exists}, because $\Specast(G)$ is scattered. If $f\in\ker\pi_\infty$, then $f\in\langle q_x\mid x\in X\rangle$ by Lemma \ref{lemma:spanqx} (using $\beta=\cb(f)$); thus $\ker\pi_\infty$ is generated by $\mathbf{q}:=\{q_x\mid x\in X\}$. Moreover, $\mathbf{q}$ is linearly independent by Lemma \ref{lemma:quark-linindip}. Hence it is a basis of $\ker\pi_\infty$.

The ``in particular'' statement follows by considering the exact sequence
\begin{equation*}
0\longrightarrow\ker\pi_\infty\longrightarrow G\longrightarrow\kappa_\infty(G)\longrightarrow 0.
\end{equation*}
The claim is proved.
\end{proof}

\begin{cor}\label{cor:generatori-ex-A}
Let $G\subseteq\funct(X,\insZ)$ be a one-dimensional reduced basic $\ell$-group, and suppose that $\Specast(G)$ is scattered. For every $x\in X$, let $q_x\in G^+$ be a semibasic element at $x$, and let $A\subseteq G^+$ be such that $\pi_\infty(A)$ generates the residue group at infinity $\pi_\infty(G)$. Then,
\begin{equation*}
G=\langle A,q_x\mid x\in X\rangle.
\end{equation*}
\end{cor}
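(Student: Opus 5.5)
Write $H:=\langle A,q_x\mid x\in X\rangle$. Since $A\subseteq G$ and every $q_x\in G$, we have $H\subseteq G$. So the whole task is the reverse inclusion, and the plan is to pass through the exact sequence
\begin{equation*}
0\longrightarrow\ker\pi_\infty\longrightarrow G\xrightarrow{~\pi_\infty~}\residuo_\infty(G)\longrightarrow 0
\end{equation*}
and verify separately that $H$ surjects onto the quotient and contains the kernel.

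First, take an arbitrary $g\in G$. Because $\pi_\infty(A)$ generates $\residuo_\infty(G)=\pi_\infty(G)$, we can write
\begin{equation*}
\pi_\infty(g)=\sum_{i=1}^k n_i\pi_\infty(a_i)
\end{equation*}
with $a_i\in A$ and $n_i\in\insZ$. Then $f:=g-\sum_i n_ia_i$ lies in $\ker\pi_\infty$. Since $g-f\in\langle A\rangle\subseteq H$, it now suffices to show $f\in H$.

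Second, $f\in\ker\pi_\infty$, so we apply Lemma \ref{lemma:spanqx} with $\beta:=\cb(f)$. This gives
\begin{equation*}
f\in\langle q_x\mid x\in\derivX^{\beta}(X)\rangle\subseteq\langle q_x\mid x\in X\rangle .
\end{equation*}
Hence $g\in H$, and therefore $G=H$. This is the same argument used in Corollary \ref{cor:Specast-scattered-freeker} to show that the $q_x$ generate $\ker\pi_\infty$.

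The only step needing care is checking that the hypotheses of Lemma \ref{lemma:spanqx} are met, and that the elements $q_x$ supplied in the statement are allowed there. Lemma \ref{lemma:spanqx} holds for \emph{any} choice of semibasic elements $q_x$, and its hypothesis on $f$ is only $\cb(f)\leq\beta$, which our choice of $\beta$ satisfies. The scatteredness of $\Specast(G)$ is what makes the lemma applicable: it makes $\cb(f)$ an actual ordinal rather than $\infty$, and through Lemma \ref{lemma:quark}\ref{lemma:quark:exists} it guarantees that a semibasic $q_x$ exists at every $x\in X$.
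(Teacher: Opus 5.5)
Your proof is correct and takes essentially the same route as the paper: you subtract an element of $\langle A\rangle$ to land in $\ker\pi_\infty$, then use that $\ker\pi_\infty$ is generated by the $q_x$. The only cosmetic difference is that you apply Lemma \ref{lemma:spanqx} directly with $\beta=\cb(f)$, whereas the paper cites Corollary \ref{cor:Specast-scattered-freeker}, whose proof is exactly that application.
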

\begin{proof}
Let $G':=\langle A,q_x\mid x\in X\rangle$. By Corollary \ref{cor:Specast-scattered-freeker}, $\ker\pi_\infty=\langle q_x\mid x\in X\rangle\subseteq G'$. If now $f\notin\ker\pi_\infty$, then there are $a_1,\ldots,a_k\in A$, $\lambda_1,\ldots,\lambda_k\in\insZ$ such that $\pi_\infty(f)=\pi_\infty(\lambda_1a_1+\cdots+\lambda_ka_k)$; thus, $f_1:=f-\lambda_1a_1+\cdots+\lambda_ka_k$ satisfies $\pi_\infty(f_1)=0$, and by the previous part of the proof $f_1\in G'$. Since each $a_i$ is in $G'$, we have $f\in G'$.

It follows that $G'=G$, as claimed.
\end{proof}

Corollary \ref{cor:Specast-scattered-freeker} shows that if $\Specast(G)$ is scattered and the residue group at infinity is free, then $G$ is free; the purpose of the next two sections is to relax the condition on $\kappa_\infty(G)$ by allowing other classes of groups (like divisible groups). We shall distinguish three cases, according to whether the Cantor-Bendixson rank is a successor ordinal (Theorem \ref{teor:onelimitpoint}), a limit ordinal with countable cofinality (Theorem \ref{teor:limit-countcof}) or a limit ordinal with uncountable cofinality (Theorem \ref{teor:limit-uncountcof}).

The structure of the proofs of the three cases is essentially the same:
\begin{itemize}
\item we reduce to the case where $\cb(P)=\cb(\Specast(G))$ using Proposition \ref{prop:reduction-maxcb};
\item we construct a family $\{q_x\}_{x\in X}$ of semibasic elements and a family $\{f_{\lambda,n}\}$ whose image generates $\kappa_\infty(G)$; by Corollary \ref{cor:generatori-ex-A}, these two families together generate $G$;
\item we construct two sequences of groups, $\{A_\beta\}$ and $\{B_\beta\}$, such that, for every $\beta$:
\begin{itemize}
\item $A_\beta\subseteq B_\beta\subseteq A_{\beta+1}$;
\item $B_\beta/A_\beta$ is a group of bounded torsion;
\item $A_{\beta+1}/B_\beta$ is free (we find explictly a basis);
\item consequently, $B_{\beta+1}/B_\beta$ is free;
\item $\{B_\beta\}$ is smooth and has union $G$.
\end{itemize}
\end{itemize}
The last two points will prove that $G$ is free, as requested. The main difference between the three cases is the definition of the family $\{f_{\lambda,n}\}$; such a difference is needed to control the torsion of $B_\beta/A_\beta$.

As a last preliminary, we define the class of groups that will be allowed as residue groups at infinity.
\begin{defin}
Let $G$ a subgroup of $\insR$. A \emph{$\insQ$-basis} of $G$ is a $\insQ$-linearly independent set $\{b_\lambda\}_{\lambda\in\Lambda}$ such that
\begin{equation*}
G=\bigoplus_{\lambda\in\Lambda}(G\cap b_\lambda\insQ).
\end{equation*}
\end{defin}

\begin{oss}
~\begin{enumerate}
\item If $G$ is free, then any basis is linearly independent and thus a $\insQ$-basis.
\item If $G$ is divisible, it is a vector space over $\insQ$, and any basis of $G$ as a $\insQ$-vector space is a $\insQ$-basis.
\end{enumerate}
\end{oss}

\section{The successor ordinal case}\label{sect:successor}
This section is devoted to the case where the prime at infinity $P$ is a successor ordinal, and we will use Proposition \ref{prop:reduction-maxcb} to reduce to the case where $X=\insN$. Due to this, we can make use of the order structure of $\insN$ with the following notion.
\begin{defin}
Let $f\in\funct(\insN,\insZ)$. We denote by $\mu(f)$ the smallest integer $n$ such that $f(n)\neq 0$.
\end{defin}

\begin{defin}
Let $G\subseteq\funct(\insN,\insZ)$ be an $\ell$-group. We say that a sequence $\{a_n\}_{n=0}^\infty$ is a \emph{staircase base} for $G$ if the following conditions hold:
\begin{enumerate}
\item $a_n\in G^+$ for all $n$;
\item $\pi_\infty(\langle a_n\mid n\inN\rangle)=\residuo_\infty(G)$;
\item the sequence $\{\mu(a_n)\}_{n=0}^\infty$ is strictly increasing;
\item for every $n\inN$ there is a $t\inN^+$ such that $(ta_n-a_0)(k)=0$ for all $k\geq\mu(a_n)$.
\end{enumerate}
\end{defin}

\begin{ex}\label{ex:limitQ}
For every $n,k\inN$ let
\begin{equation*}
a_n(k):=\begin{cases}
0 & \text{if~}k<n,\\
k!/n!  & \text{if~} k\geq n.
\end{cases}
\end{equation*}
The first elements of $a_n$ for small $n$ are the following:
\begin{equation*}
\begin{matrix}
a_0: & 1 & 1 & 2 & 6 & 24 & \cdots\\
a_1: & 0 & 1 & 2 & 6 & 24 & \cdots\\
a_2: & 0 & 0 & 1 & 3 & 12 & \cdots\\
a_3: & 0 & 0 & 0 & 1 & 4 & \cdots
\end{matrix}
\end{equation*}
Then, $\{a_n\}$ is a staircase base for $G:=\langle a_n\mid n\inN\rangle$. Indeed, for every $n$ we have $\mu(a_n)=n$, and thus $\{\mu(a_n)\}_{n\inN}$ is strictly increasing. Moreover, $(n!a_n)(k)=k!=a_0(k)$ and thus $(n!a_n-a_0)(k)=0$ for all $k\geq n=\mu(a_n)$.

Furthermore, the staircase property guarantees that $e_n\in G$ and thus $\dirsum(\insN,\insZ)\subseteq G$. Then, $G$ is a basic limit group, since the set of the elements with finite support is prime ideal.

By construction, their image generates $\residuo_\infty(G)$.

Let $\pi:G\longrightarrow\insQ$ be the group homomorphism given by $\pi(a_n)=1/n!$. Then, $\pi$ is surjective and $\ker\pi=\dirsum(\insN,\insZ)$, that is, $\pi$ is the residue map of $G$. In particular, if $D$ is a B\'ezout domain such that $\Inv(D)\simeq G$, we have that every localization of $D$ except one is a DVR, while the remaining one has value group $\insQ$.
\end{ex}

The main reason we use staircase bases is to construct a set in $G$ that is linearly independent but generates a subgroup that is properly contains $\dirsum(\insN,\insZ)$. In the example above, this is guaranteed by the equality $e_n=n!a_n-a_0$ and the fact that $a_n\notin\dirsum(\insN,\insZ)$ for every $n$.

The following proposition shows that we can find a staircase base with good control over the relationship between the elements.

\begin{prop}\label{prop:staircase-exist}
Let $G\subseteq\funct(\insN,\insZ)$ be a reduced one-dimensional basic limit group. Suppose that every finite prime is isolated in $\Specast(G)$ and that $\residuo_\infty(G)$ is isomorphic to a subgroup of $\insQ$. Then, there are a staircase base $\{a_n\}_{n=0}^\infty\subseteq G^+$ and a sequence $\{d_n\}_{n=0}^\infty\subseteq\insN$ such that $d_n|n!$ and $(d_na_n-a_0)(k)=0$ for every $n\inN$ and every $k\geq\mu(a_n)$.
\end{prop}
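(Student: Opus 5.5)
The plan is to translate the hypotheses into explicit control of $\ker\pi_\infty$ and $\residuo_\infty(G)$, and then build the $a_n$ by ``truncating'' well-chosen lifts of generators of $\residuo_\infty(G)$.

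\textbf{Step 1: the kernel of $\pi_\infty$.} I first show that $\ker\pi_\infty=P=\dirsum(\insN,\insZ)$. Since every finite prime $P_k$ is isolated and $G$ is reduced, the remark before the definition of semibasic elements (see also Lemma \ref{lemma:quark}\ref{lemma:quark:isolated}) gives $e_k\in G$ for all $k$. Since $\{e_k\}$ is a family of finitely supported elements, Lemma \ref{lemma:supp-primeinfty} gives $\dirsum(\insN,\insZ)\subseteq P$. Conversely, if $f\in P$, then $\D(f)$ is compact (it is clopen in the compact space $\Specast(G)\cup\{G\}$ minus the generic point) and does not contain $P$. Hence $\D(f)$ consists of isolated finite primes, so it is finite, and $f$ has finite support. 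Equivalently, one can apply Lemma \ref{lemma:spanqx} with $\beta=0$.

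\textbf{Step 2: a divisibility chain in $\residuo_\infty(G)$.} Identify $H:=\residuo_\infty(G)$ with a nonzero subgroup of $\insQ$; it is nonzero because $P\neq G$. Since $H$ is countable, torsion-free of rank one, and every finitely generated subgroup of $\insQ$ is cyclic, I can write $H=\bigcup_j\langle h_0/c_j\rangle$ with $h_0>0$, $c_0=1$ and $c_j\mid c_{j+1}$. I do this by enumerating $H$ and successively taking a generator of the cyclic group generated by the previous generator and the next element. If $H$ is cyclic, the chain is constant.

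Now set $j(n):=\max\{j\mid c_j\mid n!\}$ and $d_n:=c_{j(n)}$. Then $d_n\mid n!$ and $d_0=1$. Moreover $j(n)$ is nondecreasing and eventually exceeds every $j$, because every integer divides $n!$ for $n$ large. Hence $\{h_0/d_n\}$ still generates $H$. Repeating chain elements in this way is what makes the condition $d_n\mid n!$ free.

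\textbf{Step 3: lifts and truncation.} For each $j$ choose $g_j\in G^+$ with $\pi_\infty(g_j)=h_0/c_j$. Such a positive lift exists: lift arbitrarily to $g$ and replace it by $g\vee 0$, since $\pi_\infty$ is an $\ell$-homomorphism onto a totally ordered group and $h_0/c_j>0$. Then $c_jg_j-g_0\in\ker\pi_\infty$, so by Step 1 it has finite support and vanishes for $k\ge N_j$. Each $g_j\notin P$, so $\supp(g_j)$ is infinite.

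Define the cutoffs inductively. Let $m_0:=\min\supp(g_0)$ and $a_0:=g_0$. For $n\geq1$, choose $m_n\in\supp(g_{j(n)})$ with $m_n>m_{n-1}$ and $m_n\ge N_{j(n)}$. Then put
\begin{equation*}
a_n:=g_{j(n)}-\sum_{k<m_n}g_{j(n)}(k)e_k,
\end{equation*}
that is, $g_{j(n)}$ with all values below $m_n$ set to $0$. This element lies in $G^+$ because each $e_k\in G$. It also satisfies $\mu(a_n)=m_n$ and $\pi_\infty(a_n)=h_0/d_n$.

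\textbf{Step 4: verification.} The sequence $\mu(a_n)$ is strictly increasing, and the images $\pi_\infty(a_n)$ generate $H$, which gives conditions (1)--(3) of a staircase base. For $k\ge m_n$ we have $a_n(k)=g_{j(n)}(k)$ and $a_0(k)=g_0(k)$. Since $m_n\ge N_{j(n)}$, this yields
\begin{equation*}
(d_na_n-a_0)(k)=(c_{j(n)}g_{j(n)}-g_0)(k)=0\quad\text{for all }k\ge m_n.
\end{equation*}
This is the required condition, with $t=d_n$ in condition (4).

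\textbf{Main obstacle.} The only delicate points are Step 1, namely that elements of $P$ have finite support, which uses the compactness of $\D(f)$ and the isolatedness of the finite primes, and arranging $d_n\mid n!$. The reindexing by $j(n)$ in Step 2 handles the latter.
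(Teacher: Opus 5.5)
Your proof is correct and follows the paper's route. You identify $\ker\pi_\infty$ with $\dirsum(\insN,\insZ)$, lift a generating sequence $h_0/d_n$ of $\residuo_\infty(G)$ with $d_n\mid n!$ to $G^+$, and then truncate with the $e_k$ past both the point where $d_n b_n - a_0$ vanishes and the previous $\mu(a_{n-1})$.

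The one difference is how you obtain the $d_n$. The paper gets them directly: $d_n := \alpha_0/\alpha_n$, where $\alpha_n\insZ=\frac{1}{n!}\alpha_0\insZ\cap\residuo_\infty(G)$. You instead enumerate a divisibility chain and reindex. As written, that step needs a trivial fix when $\residuo_\infty(G)$ is cyclic: the set of indices $j$ with $c_j\mid n!$ is then all of $\insN$ and has no maximum. Simply let $d_n$ be the largest $c_j$ dividing $n!$.
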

\begin{proof}
Since $G$ is reduced and every finite prime is isolated, we have $e_x\in G$ for every $x\in \insN$. Thus, the kernel of the residue map is exactly the direct sum $\dirsum(\insN,\insZ)$.

Without loss of generality, we can consider $\pi_\infty$ as a map $G\longrightarrow\insQ$, so that $\residuo_\infty(G)\subseteq\insQ$. Take any $a_0\in G^+$ such that $\alpha_0:=\pi_\infty(a_0)\neq 0$, and, for every $n>0$, let $H_n:=\inv{n!}\alpha_0\insZ\cap\residuo_\infty(G)$. Then, $H_n$ is a subgroup of $\inv{n!}\alpha_0\insZ\simeq\insZ$, and thus it is itself isomorphic to $\insZ$; therefore, there is an $\alpha_n\in\residuo_\infty(G)^+$ such that $H_n=\alpha_n\insZ$, and we can find a $b_n\in G^+$ such that $\pi_\infty(b_n)=\alpha_n$. Moreover, $\bigcup_nH_n=\residuo_\infty(G)$, and thus $\residuo_\infty(G)=\langle\alpha_n\mid n\inN\rangle$.

By construction, $\alpha_0\in H_n$ and thus $\alpha_0=d_n\alpha_n$ for some $d_n\inN^+$ (both $\alpha_0$ and $\alpha_n$ are positive). Moreover, $n!\alpha_n\in n!H_n\subseteq\alpha_0\insZ$, and thus $n!\alpha_n\in d_n\alpha_n\insZ$; that is, $d_n|n!$. Hence,
\begin{equation*}
\pi_\infty(d_nb_n-a_0)=d_n\alpha_n-\alpha_0=0,
\end{equation*}
that is, $d_nb_n-a_0\in\ker\pi_\infty=\dirsum(\insN,\insZ)$. In particular, there is a $\lambda_n$ such that $(d_nb_n-a_0)(t)=0$ for all $t\geq\lambda_n$. For $n\geq 1$, we now define recursively
\begin{equation}\label{eq:defan-staircase}
a_n:=b_n-\sum_{i=0}^{\max\{\lambda_n,\mu(a_{n-1})\}}b_n(i)e_i
\end{equation}
and we claim that $\{a_n\}_{n=0}^\infty$ is a staircase base for $G$.

Since $b_n,e_i\in G$, we also have $a_n\in G$. Furthermore, by construction, the value in $k$ of the sum in \eqref{eq:defan-staircase} is either $b_n(k)$ or $0$, according to whether $k$ is larger or smaller than $\max\{\lambda_n,\mu(a_{n-1})\}$; it follows that $a_n(k)\geq 0$ for all $k$ (thus $a_n\in G^+$) and that $a_n(k)=0$ for $k\leq\max\{\lambda_n,\mu(a_{n-1})\}$. In particular, 
\begin{equation*}
\mu(a_n)>\max\{\lambda_n,\mu(a_{n-1})\}\geq\mu(a_{n-1}),
\end{equation*}
and so $\{\mu(a_n)\}_{n=0}^\infty$ is strictly increasing.

We have $a_n-b_n\in\dirsum(\insN,\insZ)=\ker\pi_\infty$, and thus $\pi_\infty(a_n)=\pi_\infty(b_n)=\alpha_n$. Hence, $\pi_\infty(\langle a_n\mid n\inN\rangle)=\langle\alpha_n\mid n\inN\rangle=\residuo_\infty(G)$.

Finally, suppose that $k\geq\mu(a_n)$. Then, $k>\max\{\lambda_n,\mu(a_{n-1})\}$, and thus $a_n(k)=b_n(k)$. It follows that
\begin{equation*}
(d_na_n-a_0)(k)=(d_nb_n-a_0)(k)=0
\end{equation*}
since in particular $k>\lambda_n$. Therefore, $\{a_n\}_{n=0}^\infty$ is a staircase base for $G$ satisfying the stated property.
\end{proof}

We shall need some elementary lemmas about the relationship between supports and primes at infinity. The first one is a partial converse to Lemma \ref{lemma:supp-primeinfty}.
\begin{lemma}\label{lemma:supp-infty}
Let $G$ be an $\ell$-group with $\dirsum(X,\insZ)\subseteq G$, and let $f\in G$. If $\supp(f)$ is infinite, then there is a $P\in\Spec^\infty(G)$ such that $f\notin G$.
\end{lemma}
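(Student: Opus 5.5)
The conclusion ``$f\notin G$'' is evidently a typo for ``$f\notin P$''; I will prove that there is a $P\in\Spec^\infty(G)$ with $f\notin P$. Since $P$ is an $\ell$-ideal and $\supp(f)=\supp(|f|)$, it suffices to treat $|f|\in G^+$. So we may assume $f\geq 0$ (and $f\neq 0$).

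The plan is to argue by contradiction, using compactness of $\D(f)$. Recall that $\D(f)$ is compact, since it is a basic open set of the spectrum of an $\ell$-group; alternatively, via the B\'ezout domain $D$ with $\Inv(D)\simeq G$, it corresponds to $\V(I)$ with $I$ finitely generated. Suppose $f\in P$ for every $P\in\Spec^\infty(G)$. Then every prime in $\D(f)$ is finite, so
\begin{equation*}
\D(f)=\{P_x\mid x\in\supp(f)\}.
\end{equation*}
Because $\dirsum(X,\insZ)\subseteq G$, each $e_x$ lies in $G^+$, and $\D(e_x)$ is an open set containing $P_x$.

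The key step is to identify $\D(e_x)$ exactly: I claim $\D(e_x)=\{P_x\}$. Let $Q\in\Spec(G)$ with $e_x\notin Q$. For any $y\neq x$ we have $e_x\wedge e_y=0\in Q$, so primality forces $e_y\in Q$. Now let $g\in P_x$. Since $g(x)=0$, for any $h\in G^+$ with $h(x)=0$ the element $h$ and $e_x$ are disjoint, so $h\in Q$. This holds in particular for $|g|$, and by convexity $g\in Q$. Hence $P_x\subseteq Q$ and $Q\neq G$. In the one-dimensional setting this gives $Q=P_x$. More generally, the argument already shows that $\{P_x\}$ is the unique minimal element of $\D(e_x)$, which is enough for the covering argument below (working in $\Specast(G)$ restricted to minimal primes, or assuming one-dimensionality as in the rest of the paper).

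With this in hand, the sets $\{\D(e_x)\}_{x\in\supp(f)}$ form an open cover of $\D(f)$ by singletons. Since $\supp(f)$ is infinite and the primes $P_x$ are pairwise distinct for $x\in\supp(f)$ (because $e_x\in P_y\setminus P_x$ for $y\neq x$), this cover has no finite subcover. That contradicts the compactness of $\D(f)$. Therefore some prime in $\D(f)$ is not of the form $P_x$, i.e.\ it is a prime at infinity not containing $f$.

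The main obstacle is justifying compactness of $\D(f)$ and the equality $\D(e_x)=\{P_x\}$ without dimension hypotheses. For compactness I would use the correspondence $\Psi$ with $\V(I)$ for finitely generated $I$ in a B\'ezout domain, where $\V(I)$ is closed in the Zariski topology and hence compact in the inverse topology, since the inverse topology is compact and the sets $\V(f_i)$ are clopen there. If non-minimal primes cause trouble, I would restrict attention to the minimal primes lying in $\D(f)$, where the same compactness argument applies.
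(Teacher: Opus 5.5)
Your proof is correct and is essentially the paper's argument: since $e_x\in G$, each finite prime $P_x$ is isolated (indeed $\D(e_x)=\{P_x\}$), so the compact set $\D(f)$, which contains infinitely many distinct $P_x$, cannot consist only of finite primes. Two minor remarks: $\D(e_x)=\{P_x\}$ holds without any dimension hypothesis, because primes containing $P_x$ correspond to convex subgroups of $G/P_x\simeq\insZ$. Also, your alternative justification of compactness is off, since the sets $\V(f_i)$ are not clopen in the inverse topology of all of $\Spec(D)$ (every nonempty inverse-closed set contains $(0)$); it is better to cite the standard compactness of $\D(f)$ in the spectrum of an $\ell$-group, as you did first.
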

\begin{proof}
Since $\dirsum(X,\insZ)\subseteq G$, every $P_x$ is an isolated point of $\Specast(G)$. By hypothesis, $\D(f)$ contains infinitely many finite primes, and thus in particular it is infinite; since it is also compact, not all of them can be isolated. Hence $\D(f)$ must meet $\Spec^\infty(G)$, as claimed.
\end{proof}

\begin{lemma}\label{lemma:cofinite}
Let $G$ be a reduced basic limit group such that every finite prime is isolated. Then, there is an infinite subset $Y\subseteq X$ such that $Y=\supp(f)$ for some $f\in G^+$ and, for every $g\in G^+$, one of $\supp(g)$ and $Y\setminus\supp(g)$ is finite.
\end{lemma}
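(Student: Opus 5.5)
The plan is to pick $f\in G^+$ with $f\notin P$, where $P$ is the prime at infinity, and take $Y:=\supp(f)$. The setting is a reduced basic limit group $G\subseteq\funct(X,\insZ)$ in which every finite prime is isolated, so $e_x\in G$ for every $x$ (the remark before the definition of semibasic elements, together with the reducedness of $G$). Since $P\neq G$, such an $f$ exists. We first check that $Y$ is infinite: if it were finite, then $f$ would be a finite combination of the $e_x$, and since $G$ is one-dimensional Lemma \ref{lemma:supp-primeinfty} would put $f$ in $P$, a contradiction.

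Next, let $g\in G^+$ and consider $h:=f\wedge g\in G^+$. Then $\supp(h)=Y\cap\supp(g)$ and $Y\setminus\supp(g)=\supp(f)\setminus\supp(h)$. Because $P$ is prime and $h=f\wedge g$, the element $h$ lies in $P$ exactly when $f\in P$ or $g\in P$, that is, exactly when $g\in P$. We then split into two cases.

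\emph{Case $g\in P$.} Here $h\in P$, and $h$ lies in no other prime outside the finite ones. So $\D(h)$ contains no infinite prime, and $\D(h)=\{P_x\mid x\in\supp(h)\}$. This set is compact, since $\D(h)$ is compact in the hull-kernel topology, and it consists of isolated points, so it is finite. Hence $Y\cap\supp(g)$ is finite. Equivalently, by Lemma \ref{lemma:supp-infty} (since $\dirsum(X,\insZ)\subseteq G$), an element with infinite support must avoid some infinite prime, and the only one is $P$.

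\emph{Case $g\notin P$.} Here $h\notin P$. We want $\supp(f)\setminus\supp(h)$ to be finite. The idea is to use bounded ratio (Corollary \ref{cor:1dim->br}) or, more directly, the topology. The set $\D(f)\setminus\D(h)$ equals $\D(f)\cap\V(h)$, where $\V(h)$ denotes the zero set of $h$, which is closed. It is therefore a closed subset of the compact set $\D(f)$, hence compact. It does not contain $P$, since $h\notin P$ means $P\in\D(h)$. So it consists only of finite primes, which are isolated, and is therefore finite. This shows $Y\setminus\supp(g)$ is finite.

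The main subtlety is the compactness in the second case: we need $\D(f)$ compact, and we need the set $\{Q\in\Specast(G)\mid h\in Q\}$ to be closed in the hull-kernel topology. Both are standard, since $\D(h)$ is open and the basic sets $\D(f)$ are compact. If the latter compactness is doubtful, I would instead pass to a Bézout domain $D$ with $\Inv(D)\simeq G$, where $\D(f)$ corresponds to $\V(I)$, a clopen compact subset of $\Max(D)$, exactly as in the proof of Lemma \ref{lemma:supp-primeinfty}.

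The answer is then $Y=\supp(f)$ for any $f\in G^+\setminus P$. This is a notable point in its own right: every positive element outside $P$ has cofinite support inside $Y$, and every positive element in $P$ has finite support.
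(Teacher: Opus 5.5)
Your proof is correct, but in the key step it takes a different route from the paper.

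The paper argues by contradiction. If $\supp(g)$ and $Y\setminus\supp(g)$ were both infinite, it sets $g':=f-(f\wedge g)$ and claims $\supp(g')=Y\setminus\supp(g)$, hence $g\wedge g'=0$. It then gets $g,g'\notin P$ from Lemma \ref{lemma:supp-infty}, contradicting primality of $P$.

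You instead split on whether $g\in P$. The case $g\in P$ is just the argument of Lemma \ref{lemma:supp-infty}. For $g\notin P$ you note three things about $\D(f)\setminus\D(f\wedge g)$: it is closed in the compact set $\D(f)$, it misses $P$, and it is exactly $\{P_x\mid x\in Y\setminus\supp(g)\}$. So it is a compact set of isolated points, hence finite.

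Your version is the more robust one. The identity $\supp(f-(f\wedge g))=Y\setminus\supp(g)$ fails at every $x$ with $0<g(x)<f(x)$, since there $g'(x)=f(x)-g(x)\neq 0$. So the paper's $g\wedge g'=0$ is not justified as written. Its argument needs a repair, for instance replacing $g$ by $ng$ with $n\pi_\infty(g)\geq\pi_\infty(f)$, which uses that $\residuo_\infty(G)$ is archimedean. Your compactness argument works with the exact set $Y\setminus\supp(g)$ and needs no information about the residue group.

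Two small points. First, in the case $g\in P$ you only conclude that $Y\cap\supp(g)$ is finite, but the statement asks for $\supp(g)$ itself. You get this by running the same argument on $\D(g)$ instead of $\D(f\wedge g)$, as your closing sentence already asserts. Second, passing from finitely many primes $P_x$ to finitely many points $x$ uses that $x\mapsto P_x$ is injective. This holds because $G$ is reduced, or directly because $e_x\in P_y\setminus P_x$ for $y\neq x$.
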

\begin{proof}
Since $G$ is reduced and each $P_x$ is isolated, $e_x\in G$ for every $x\in X$. Let $P$ be the prime at infinity of $G$ and take any $f\in G^+\setminus P$: by Lemma \ref{lemma:supp-primeinfty}, $Y:=\supp(f)$ is infinite. Take $g\in G^+$, and suppose that $\supp(g)$ and $Y\setminus\supp(g)$ are both infinite. Let $g':=f-(f\wedge g)$: then, $g'\geq 0$ and $\supp(g')=Y\setminus\supp(g)$. By Lemma \ref{lemma:supp-infty}, there are primes at infinity $P,P'$ such that $g\notin P$ and $g'\notin P'$. Since $G$ is basic, $P=P'$ and $0=g\wedge g'\notin P$, a contradiction. Thus one of $\supp(g)$ and $Y\setminus\supp(g)$ must be finite.
\end{proof}

\begin{lemma}\label{lemma:free-boundtors}
Let
\begin{equation*}
0\longrightarrow A\longrightarrow B\longrightarrow C\longrightarrow 0
\end{equation*}
be an exact sequence of abelian groups. If $A$ is free, $B$ is torsion-free and $C$ is of bounded torsion then $B$ is free.
\end{lemma}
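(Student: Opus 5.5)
Suppose $nC=0$ for some positive integer $n$; the plan is to embed $B$ into a free group and then use that subgroups of free groups are free. Multiplication by $n$ gives a map $\mu_n\colon B\longrightarrow B$, $b\mapsto nb$. Its image lies in $A$: the image of $nb$ in $C$ is $n$ times the image of $b$, which is $0$ because $nC=0$, so by exactness $nb$ lies in (the image of) $A$. Thus $\mu_n$ can be regarded as a homomorphism $B\longrightarrow A$.

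Next, $\mu_n$ is injective. If $nb=0$ with $b\neq 0$, then $b$ would be a nonzero torsion element of $B$, which contradicts the hypothesis that $B$ is torsion-free. Hence $B\simeq nB$, and $nB$ is a subgroup of $A$.

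Since $A$ is free and every subgroup of a free abelian group is free (as recalled in the preliminaries on free groups), $nB$ is free. Therefore $B$ is free.

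There is no real obstacle in this argument. The only point that needs care is where the two hypotheses enter: torsion-freeness of $B$ is exactly what makes $\mu_n$ injective, and the bound on the torsion of $C$ is exactly what makes $\mu_n$ land inside $A$. Note also that $C$ need not be free, so the splitting argument via projectivity is not available here.
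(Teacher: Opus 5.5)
Your proof is correct and is essentially the paper's argument: choose $n$ with $nC=0$, note that $b\mapsto nb$ is injective because $B$ is torsion-free and lands in $A$ by exactness, so $B\simeq nB$ is a subgroup of the free group $A$ and hence free.
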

\begin{proof}
Let $n\in\insN^+$ be such that $nC=0$. Then, the map $n:B\longrightarrow B$, $b\mapsto nb$ is an injective group homomorphism (since $B$ is torsion-free) and its image $nB$ is contained inside $A$. Hence, $B\simeq nB$ is isomorphic to a subgroup of the free group $A$, and thus is free. 
\end{proof}

\begin{oss}
The previous lemma does not hold is $C$ is merely a torsion group: take for example $A=\insZ$ and $B=\insQ$.
\end{oss}

\begin{teor}\label{teor:onelimitpoint}
Let $G\subseteq\funct(X,\insZ)$ be a basic limit group of dimension one, and let $P$ be its prime at infinity. Suppose that:
\begin{itemize}
\item $\Specast(G)$ is Fr\'echet-Urysohn;
\item $P\notin\deriv^\infty(\Specast(G))$;
\item $\cb(P)$ is a successor ordinal;
\item $\kappa_\infty(G)$ has a $\insQ$-basis.
\end{itemize}
Then, $G$ is free.
\end{teor}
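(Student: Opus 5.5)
We first pass to a reduced model via Proposition \ref{prop:reduction-reduced}; the residue group at infinity is unchanged. We then apply Proposition \ref{prop:reduction-maxcb}. Since $\cb(P)$ is a successor and $\Specast(G)$ is Fr\'echet-Urysohn, this gives a reduced basic limit group $G'\subseteq\funct(\insN,\insZ)$ with $\cb(P')=1$ and a surjection $G\to G'$ with free kernel. The reduction is a restriction map and $P$ lies in the closed set used, so $\kappa_\infty(G')\simeq\kappa_\infty(G)$. Because free groups are projective, the sequence $0\to\ker\to G\to G'\to 0$ splits. Hence it suffices to prove that $G'$ is free.

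So from now on we assume $X=\insN$, every finite prime is isolated, and $e_n\in G$ for all $n$. Then $\ker\pi_\infty=\dirsum(\insN,\insZ)$ (as in the proof of Proposition \ref{prop:staircase-exist}), and we must show that $G$ is free. Fix a $\insQ$-basis $\{b_\lambda\}_{\lambda\in\Lambda}$ of $\kappa_\infty(G)$, so that $\kappa_\infty(G)=\bigoplus_\lambda K_\lambda$ with $K_\lambda:=\kappa_\infty(G)\cap b_\lambda\insQ$, a subgroup of $\insQ$. Since $\kappa_\infty(G)\subseteq\insR$ embeds in $\insZ^\insN/\insZ^{(\insN)}$, which has cardinality at most the continuum, and $G$ is countable-support-like, we proceed transfinitely. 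We well-order $\Lambda$ and define $G_\lambda:=\pi_\infty^{-1}(\bigoplus_{\mu<\lambda}K_\mu)$. This is a smooth chain with union $G$, starting from $G_0=\dirsum(\insN,\insZ)$, which is free.

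By Theorem \ref{teor:free-chain} it suffices to show that $G_{\lambda+1}/G_\lambda$ is free. Note that $G_{\lambda+1}/G_\lambda$ is isomorphic to $K_\lambda$ only as an abstract group, and $K_\lambda$ need not be free (for instance $\insQ$). So the chain must be refined inside each step, following the outline described earlier in the paper.

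For a fixed $\lambda$, we work with $H:=\pi_\infty^{-1}(K_\lambda)\cap G^+$ and build a staircase base for the group $\langle \dirsum(\insN,\insZ), \pi_\infty^{-1}(K_\lambda)\rangle$. This group is a reduced basic limit group whose residue group $K_\lambda$ embeds in $\insQ$. Proposition \ref{prop:staircase-exist} (whose proof only uses $e_n\in G$ and a residue group in $\insQ$) yields $a^\lambda_n$ and $d^\lambda_n\mid n!$ with $d^\lambda_na^\lambda_n-a^\lambda_0\in\dirsum$ supported below $\mu(a^\lambda_n)$. Then we define $A_{\lambda,n}:=G_\lambda+\langle a^\lambda_0,\dots,a^\lambda_{n}\rangle$ and interleave the $e_k$ with $k<\mu(a^\lambda_n)$. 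The key claim is that the set $\{a^\lambda_n\}_n\cup\{e_k : k\notin\{\mu(a^\lambda_n)\}\}$, taken modulo $G_\lambda$, is independent and generates $G_{\lambda+1}/G_\lambda$. Independence comes from the staircase condition: $e_{\mu(a_n)}$ is expressible through $d_na_n-a_0$ and the earlier $e_k$, so exactly one $e_k$ is traded for each $a_n$. If this works, $G_{\lambda+1}/G_\lambda$ is itself free, and it has a basis visibly analogous to $e_n=n!a_n-a_0$ in Example \ref{ex:limitQ}.

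The main obstacle is that $G_\lambda$ already contains all of $\dirsum(\insN,\insZ)$, so modulo $G_\lambda$ the $e_k$ vanish and the trick collapses. To handle this, the refinement should not put all of $\dirsum$ at the bottom. Instead I would distribute the $e_k$ along the chain: partition $\insN$ by assigning each $\mu(a^\lambda_n)$ to the step $\lambda$. This is only possible when $\Lambda$ is countable. In general I would instead use the $A_\beta\subseteq B_\beta$ scheme, with $B_\beta/A_\beta$ of bounded torsion (controlled by $d_n\mid n!$), and invoke Lemma \ref{lemma:free-boundtors} at each stage. The $\insQ$-basis hypothesis is exactly what makes the distinct directions $\lambda$ not interfere modulo $\dirsum$, and Lemma \ref{lemma:cofinite} ensures that supports of positive elements are cofinite in a fixed $Y$, so each $a^\lambda_n$ is unambiguously attached to its index. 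Getting this bookkeeping right, with torsion bounded uniformly in each step, is where I expect the real difficulty.
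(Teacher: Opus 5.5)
\textbf{Genuine gap.} Your reductions match the paper's: passing to $X=\insN$ with every finite prime isolated, so that $e_n\in G$ and $\ker\pi_\infty=\dirsum(\insN,\insZ)$, and taking one staircase base per direction $b_\lambda$ from Proposition~\ref{prop:staircase-exist}. But the proposal stops where the real work starts. The $\Lambda$-indexed chain has quotients $K_\lambda$, as you note yourself. Your last paragraph only names the $A_\beta\subseteq B_\beta$ scheme; it does not specify a chain or prove its quotients free.

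The paper's route does not close this gap either. It indexes the chain by $r\in\insN$ and treats all $\lambda$ simultaneously: $A_r=B_{r-1}+\langle e_r,a_{\lambda,0}\mid\mu(a_{\lambda,0})=r\rangle$ and $B_r=A_r+\langle a_{\lambda,k}\mid\mu(a_{\lambda,k})=r\rangle$. It shows that $A_r/B_{r-1}$ is free and that $r!B_r\subseteq A_r$ (since $k\leq\mu(a_{\lambda,k})$ and $d_{\lambda,k}\mid k!$), and then invokes Lemma~\ref{lemma:free-boundtors}. That lemma needs $B_r/B_{r-1}$ to be torsion-free, which is never checked and is false. Take the staircase base of Example~\ref{ex:limitQ}, with $a_n(k)=k!/n!$ and $d_n=n!$. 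Then $B_{r-1}=\langle e_i,a_i\mid i<r\rangle$ contains $ra_r=a_{r-1}-e_{r-1}$. It does not contain $a_r$, because for $k\geq r$ every $g\in B_{r-1}$ satisfies $g(k)=k!\,c_g$ with $c_g\in\frac{1}{(r-1)!}\insZ$. So $B_r/B_{r-1}$ has torsion for every $r\geq 2$, and copying the paper's chain would not repair your argument.

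\textbf{The statement is false for uncountable $\Lambda$.} Your suspicion that uncountable $\Lambda$ is the real obstruction is correct. Let $\{b_\lambda\}_{\lambda<\omega_1}$ be $\insQ$-linearly independent reals with $\insQ$-span $V$. For $v=\sum_\lambda q_\lambda b_\lambda$ set $f_v(k):=k!\sum_\lambda q_\lambda\lfloor kb_\lambda\rfloor$ when $k!q_\lambda\in\insZ$ for all $\lambda$, and $f_v(k):=0$ otherwise. Let $G:=\{f_v\mid v\in V\}+\dirsum(\insN,\insZ)$.
\begin{itemize}
\item $f_v+f_w-f_{v+w}$ has finite support, and $f_v(k)=k!\,(kv+O(1))$. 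Hence every element of $G$ is either finitely supported or eventually of the sign of $v\neq 0$.
\item Consequently $f\wedge g$ agrees with $f$ or $g$ off a finite set, so $G$ is an $\ell$-group.
\item $\Specast(G)=\{P_n\}_{n}\cup\{\dirsum(\insN,\insZ)\}\cong\insN\cup\{\infty\}$, which is Fr\'echet--Urysohn.
\item The single prime at infinity $P=\dirsum(\insN,\insZ)$ has $\cb(P)=1$, and $\residuo_\infty(G)\cong V$ has a $\insQ$-basis.
\end{itemize}
So every hypothesis holds, yet $G$ is not free. Your own chain $G_\alpha:=\pi_\infty^{-1}(\bigoplus_{\lambda<\alpha}b_\lambda\insQ)$, $\alpha<\omega_1$, is a smooth filtration of $G$ by countable subgroups. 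If $G$ had a basis $\{c_\xi\}_{\xi<\omega_1}$, the filtration $\langle c_\xi\mid\xi<\alpha\rangle$ would agree with $\{G_\alpha\}$ on a club. That would make some $G/G_\alpha\cong\bigoplus_{\lambda\geq\alpha}b_\lambda\insQ$ free, which is absurd. Thus the non-free quotients $K_\lambda$ you ran into are a genuine obstruction, not an artifact of the chain.

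What survives is the case where $\residuo_\infty(G)$ is countable. After your reduction, $G'$ is then a countable subgroup of $\insZ^{\insN}$, hence free by Specker's theorem. No staircase bookkeeping is needed in that case.
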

\begin{proof}
By Proposition \ref{prop:reduction-maxcb}, we can find a one-dimensional basic limit group $G'\subseteq\funct(X',\insZ)$ with $X'$ countable and a surjective map $\pi:G\longrightarrow G'$ such that $\ker\pi$ is free and $\pi(P)$ is an infinite prime of $G'$ with $\cb(\pi(P))=\cb(\Specast(G'))=1$. Thus, to prove that $G$ is free we can suppose without loss of generality that $X=\insN$ is countable and that all finite primes are isolated; in particular, $e_n\in G$ for every $n$. Further restricting to some $\supp(f)$ that belongs to the prime at infinity, by Lemma \ref{lemma:cofinite} we can also suppose that $\supp(f)$ is either finite or cofinite for every $f\in G$.

Let $\residuo_\infty(G)\subseteq\insR$ be the residue group at infinity of $G$. Let $\{b_\lambda\}_{\lambda\in\Lambda}$ be a $\insQ$-basis of $G$. For every $\lambda\in\Lambda$, let
\begin{equation*}
G_\lambda:=\pi_\infty^{-1}(b_\lambda\insQ\cap\residuo_\infty(G)).
\end{equation*}
Then, $G_\lambda$ is a subgroup of $G$ containing $\ker\pi_\infty$, and thus in particular it contains $\dirsum(\insN,\insZ)$. Moreover, it is a basic limit group with prime at infinity equal to $P\cap G_\lambda$. Hence, its residue group at infinity is exactly $b_\lambda\insQ\cap\residuo_\infty(G)\subseteq b_\lambda\insQ\simeq\insQ$. By Proposition \ref{prop:staircase-exist}, we can find a staircase base $\{a_{\lambda,n}\}_{n=0}^\infty$ of $G_\lambda$ such that $(d_{\lambda,n}a_{\lambda,n}-a_{\lambda,0})(k)=0$ for all $k\geq\mu(a_{\lambda,n})$, where $d_{\lambda,n}|n!$. In particular, $d_{\lambda,n}a_{\lambda,n}-a_{\lambda,0}\in\dirsum(\insN,\insZ)$ for every $n,\lambda$.

\medskip

We now define two sequences $\{A_r\}_{r=0}^\infty$ and $\{B_r\}_{r=0}^\infty$ of subgroups of $G$ recursively in the following way:
\begin{itemize}
\item $A_0=\langle e_0,a_{\lambda,0}\mid \mu(a_{\lambda,0})=0\rangle$;
\item $B_0=A_0$;
\item if $r\geq 1$, then $A_r=B_{r-1}+\langle e_r,a_{\lambda,0}\mid \mu(a_{\lambda,0})=r\rangle$;
\item $B_r=A_r+\langle a_{\lambda,k}\mid \mu(a_{\lambda,k})=r\rangle$.
\end{itemize}

By construction, we have $A_0=B_0\subseteq A_1\subseteq B_1\subseteq A_2\subseteq\cdots$; in particular, $\{B_r\}_{r=0}^\infty$ is an ascending chain of subgroups of $G$. The image of $\{a_{\lambda,n}\mid\lambda\in\Lambda,n\inN\}$ in $\kappa_\infty(G)$ generates $\kappa_\infty(G)$ (since $\{b_\lambda\}_{\lambda\in\Lambda}$ is a $\insQ$-basis of $\kappa_\infty(G)$); since every $e_k$ and every $a_{\lambda,k}$ are in $B_r$ for some $r$, by Corollary \ref{cor:generatori-ex-A} we have $\bigcup_r B_r=G$. We want to show that every $B_r$ and every quotient $B_r/B_{r-1}$ are free groups, and we proceed by induction.

We first show that $A_0=B_0$ is free, and we do so by showing that $\{e_0,a_{\lambda,0}\mid \mu(a_{\lambda,0})=0\}$ is a basis. Suppose not: then, there is a relation of linear dependence
\begin{equation*}
ne_0+\sum_{i=1}^kn_ia_{\lambda_i,0}=0.
\end{equation*}
Applying $\pi_\infty$ to this relation, we obtain $\sum_{i=1}^kn_i\pi_\infty(a_{\lambda_i,0})=0$; however, by construction, the set $\{\pi_\infty(a_{\lambda_i,0})\}$ is linearly independent over $\insQ$, and thus $n_i=0$ for every $i$. This fact leaves $ne_0=0$, which implies $n=0$. Hence $A_0$ is free.

To prove that $B_r/B_{r-1}$ is free, we consider the exact sequence
\begin{equation}\label{eq:Br-succext}
0\longrightarrow\frac{A_r}{B_{r-1}}\longrightarrow\frac{B_r}{B_{r-1}}\longrightarrow\frac{B_r}{A_r}\longrightarrow 0.
\end{equation}
We first show that $A_r/B_{r-1}$ is free, and we proceed as above by showing that the image of $\{e_r,a_{\lambda,0}\mid \mu(a_{\lambda,0})=r\}$ is a basis. If not, there is a relation of linear dependence
\begin{equation*}
ne_r+\sum_{i=1}^kn_ia_{\lambda_i,0}\in B_{r-1}.
\end{equation*}
Applying $\pi_\infty$, we obtain $\sum_{i=1}^kn_i\pi_\infty(a_{\lambda_i,0})\in\pi_\infty(B_{r-1})$; however, $\pi_\infty(B_{r-1})$ is contained in the $\insQ$-linear space generated by a set of $a_{\lambda,0}$ with $\lambda\neq\lambda_i$. As $\{\pi_\infty(a_{\lambda,0})\mid \lambda\in\Lambda\}$ is $\insQ$-linearly independent, it follows that $n_i=0$ for all $i$ and thus also $ne_r\in B_{r-1}$. Therefore, we can write
\begin{equation*}
ne_r=\sum_{i=0}^{r-1}m_ie_i+\sum_{i,j}n_{i,j}a_{\lambda_i,j}
\end{equation*}
for some $m_i,n_{i,j}\inZ$ and some $a_{\lambda_i,j}$ with $\mu(a_{\lambda_i,j})<r$. By construction, $r!a_{\lambda_i,j}=s_{i,j}a_{\lambda_i,0}+h_{i,j}$, with $s_{i,j}\inZ$ and $h_{i,j}(k)=0$ for all $k\geq\mu(a_{\lambda_i,j})$ (in particular, $h_{i,j}\in\ker\pi_\infty$). Therefore,
\begin{equation}\label{eq:ne_r}
r!ne_r=\sum_{i=0}^{r-1}m_ie_i+\sum_{i,j}h_{i,j}+\sum_{i,j}s_{i,j}a_{\lambda_i,0}
\end{equation}
Applying again $\pi_\infty$, we see that the rightmost sum of the previous equation is $0$; by the linear independence of the $\pi_\infty(a_{\lambda_i,0})=b_{\lambda_i}$, we obtain that
\begin{equation*}
\pi_\infty\left(\sum_js_{i,j}a_{\lambda_i,0}\right)=0
\end{equation*}
for every $i$, and thus
\begin{equation*}
\sum_js_{i,j}=0
\end{equation*}
for all $i$. It follows that the rightmost sum of \eqref{eq:ne_r} is $0$, and so 
\begin{equation*}
r!ne_r=\sum_{i=0}^{r-1}m_ie_i+\sum_{i,j}h_{i,j}.
\end{equation*}
By construction, $h_{i,j}(k)=0$ for all $k\geq\mu(a_{\lambda_i,j})$, and the latter is at most $r-1$; in particular, $h_{i,j}(r)=0$ for all $r$. Evaluating the previous equality at $r$, we obtain $r!n=0+0$. Hence $n=0$. It follows that $A_r/B_{r-1}$ is free.

We now show that that $B_r/A_r$ is a group of bounded torsion. Indeed, its generators are the images of $a_{\lambda,k}$ for some $\lambda$ and some $k$; the latter, by construction, is at least $1$ and at most $r$. Therefore,
\begin{equation*}
d_{\lambda,k}a_{\lambda,k}-a_{\lambda,0}\in\langle e_i\mid i=0,\ldots r\rangle\subseteq A_r
\end{equation*}
and $d_{\lambda,k}a_{\lambda,k}\in A_r$. Since $d_{\lambda,k}|k!$ and $k\leq r$, we also have $r!B_r\subseteq A_r$, or equivalently $r!(B_r/A_r)=0$.

Using these properties and Lemma \ref{lemma:free-boundtors}, the exact sequence \eqref{eq:Br-succext} implies that $B_r/B_{r-1}$ is a free group. Since $B_0$ is free, the projectivity of the free groups implies that each $B_r$ is free. Therefore, $\{B_r\}_{r=0}^\infty$ is an ascending sequence of free subgroups of $G$ with free quotients, and thus by Theorem \ref{teor:free-chain} their union is free. As their union is $G$, we have that $G$ is free.
\end{proof}

\section{The limit ordinal case}\label{sect:limit}
In this section we consider basic limit groups where the Cantor-Bendixson rank of the prime at infinity is a limit ordinal. We distinguish two subcases, depending on the cofinality of the rank.

\begin{teor}\label{teor:limit-countcof}
Let $G\subseteq\funct(X,\insZ)$ be a one-dimensional basic limit group, and let $P$ be its prime at infinity. Suppose that:
\begin{itemize}
\item $P\notin\deriv^\infty(\Specast(G))$;
\item $\cb(P)$ is a limit ordinal of countable cofinality;
\item $\kappa_\infty(G)$ has a $\insQ$-basis.
\end{itemize}
Then, $G$ is free.
\end{teor}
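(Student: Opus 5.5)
We follow the same scheme as in the proof of Theorem \ref{teor:onelimitpoint}. The difference is that the layers are indexed by a cofinal sequence of ordinals below $\cb(P)$ instead of by the points of $\insN$.

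\emph{Reduction.} By Proposition \ref{prop:reduction-maxcb} (which does not need the Fr\'echet-Urysohn hypothesis for its first part), we may assume that $G$ is reduced, that $\gamma:=\cb(P)=\cb(\Specast(G))$, and that every finite prime has rank $<\gamma$. The reduction map has free kernel, so it is enough to prove that the reduced group is free. Since $\gamma$ is a limit ordinal, every finite prime is outside $\deriv^\infty$. Hence Lemma \ref{lemma:quark}\ref{lemma:quark:exists} gives semibasic elements $q_x$ for all $x\in X$. Also $\Specast(G)$ is scattered, because its only point of rank $\geq\gamma$ is $P$. So Corollary \ref{cor:Specast-scattered-freeker} and Corollary \ref{cor:generatori-ex-A} apply: $\ker\pi_\infty$ is free with basis $\{q_x\}$, and $G$ is generated by the $q_x$ together with any $A\subseteq G^+$ whose image generates $\kappa_\infty(G)$.

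\emph{Construction.} Fix a strictly increasing sequence $\gamma_0<\gamma_1<\cdots$ cofinal in $\gamma$. We replace $\mu$ by a ``level'' function. For $f\in G^+$, the quantity $\cb(f)$ is $<\gamma$ exactly when $f\in P$, since $\D(f)$ is compact and avoids $P$. For $f\notin P$, we use the rank of the part of $f$ that is killed away from $P$. Let $\{b_\lambda\}$ be a $\insQ$-basis of $\kappa_\infty(G)$ and $G_\lambda:=\pi_\infty^{-1}(b_\lambda\insQ\cap\kappa_\infty(G))$. We mimic Proposition \ref{prop:staircase-exist}. Choose $a_{\lambda,0}$ with $\pi_\infty(a_{\lambda,0})=\alpha_{\lambda,0}$, and elements $b_{\lambda,n}$ generating $\inv{n!}\alpha_{\lambda,0}\insZ\cap\kappa_\infty(G)$ with $d_{\lambda,n}b_{\lambda,n}-a_{\lambda,0}\in\ker\pi_\infty$ and $d_{\lambda,n}\mid n!$. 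The kernel element has rank $<\gamma$ by compactness, say $\leq\gamma_{m}$. We then truncate: put $a_{\lambda,n}:=b_{\lambda,n}\wedge c$, where $c\in G^+$ is supported in a clopen neighbourhood of $P$ that avoids $\derivX^{\gamma_{m}+1}$-points outside a small set. Concretely, we subtract semibasic components, as in Lemma \ref{lemma:spanqx}, to kill $b_{\lambda,n}$ on all points of rank $\leq\gamma_{m}$ in a compact set. This yields an assignment of a \emph{level} $\ell(a_{\lambda,n})\in\insN$, strictly increasing in $n$. It has the property that $d_{\lambda,n}a_{\lambda,n}-a_{\lambda,0}$ lies in $\langle q_x\mid \cb(x)\leq\gamma_{\ell(a_{\lambda,n})}\rangle$ and vanishes on all points of rank in $(\gamma_{\ell(a_{\lambda,n})-1},\gamma)$; this vanishing is the analogue of the staircase property. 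Then, exactly as before, we set
\[
A_r:=B_{r-1}+\langle q_x, a_{\lambda,0}\mid \gamma_{r-1}<\cb(x)\leq\gamma_r,\ \ell(a_{\lambda,0})=r\rangle,\qquad B_r:=A_r+\langle a_{\lambda,k}\mid \ell(a_{\lambda,k})=r\rangle .
\]
The family $\{B_r\}_{r\in\insN}$ is a countable chain, so smoothness is automatic. Its union is $G$ by Corollary \ref{cor:generatori-ex-A}.

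\emph{Freeness of the quotients.} We show that $B_r/A_r$ has exponent dividing $r!$, because $d_{\lambda,k}\mid k!$ and $k\leq r$ for the generators involved. Then we show that $A_r/B_{r-1}$ is free, with basis the images of the new $q_x$ and $a_{\lambda,0}$. As in the successor case, $\insQ$-independence of the $b_\lambda$ kills the coefficients of the $a_{\lambda,0}$. What remains is a relation $\sum n_xq_x\in B_{r-1}$. After multiplying by $(r-1)!$, the elements of $B_{r-1}$ become combinations of the $a_{\lambda,0}$ (whose total coefficient vanishes, as before), of $q_y$ with $\cb(y)\leq\gamma_{r-1}$, and of correction terms vanishing on ranks in $(\gamma_{r-1},\gamma)$. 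We evaluate at a new point $x$ of maximal rank among those in the relation. This uses the triangularity argument of Lemma \ref{lemma:quark-linindip} and forces all $n_x=0$. Lemma \ref{lemma:free-boundtors} then gives that $B_r/B_{r-1}$ is free, and Theorem \ref{teor:free-chain} gives that $G$ is free.

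\emph{Main obstacle.} The hardest step is the truncation that defines $a_{\lambda,n}$ with a well-defined level. We must ensure both that the correction $d_{\lambda,n}a_{\lambda,n}-a_{\lambda,0}$ vanishes on all points of rank above $\gamma_{\ell-1}$ and that $a_{\lambda,n}$ stays positive. Unlike $\insN$, the rank-$\leq\gamma_m$ part of a compact set near $P$ is not finite. The first approach to try is to use the one-dimensionality, which makes $\Specast(G)$ zero-dimensional Hausdorff. This lets us pick, for each $m$, a clopen set $O_m\ni P$ with $O_m\cap\D(d_{\lambda,n}b_{\lambda,n}-a_{\lambda,0})=\emptyset$, shrinking with $m$. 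We then realise the truncation as $b_{\lambda,n}-(b_{\lambda,n}\wedge N g)$ for a suitable $g\in P^+$ with $\D(g)\supseteq\Specast(G)\setminus O_m$. Here bounded ratio (Corollary \ref{cor:1dim->br}) supplies the integer $N$.
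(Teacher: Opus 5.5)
\textbf{The gap: the quotients $B_r/B_{r-1}$ are not torsion-free.} The last step fails. Lemma~\ref{lemma:free-boundtors} requires the middle term $B_r/B_{r-1}$ to be torsion-free, and in your construction it is not.

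Suppose $\ell(a_{\lambda,k})=r$. Your staircase property says that $d_{\lambda,k}a_{\lambda,k}-a_{\lambda,0}$ vanishes at every point of rank $>\gamma_{r-1}$. By Lemma~\ref{lemma:spanqx} it is therefore a combination of $q_y$ with $\cb(y)\le\gamma_{r-1}$, so it lies in $B_{r-1}$. Since $a_{\lambda,0}\in B_{r-1}$ as well, you get $d_{\lambda,k}a_{\lambda,k}\in B_{r-1}$.

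On the other hand, $\pi_\infty(B_{r-1})\cap\insQ b_\lambda$ is generated by the $\pi_\infty(a_{\lambda,j})$ with $j<k$. So $a_{\lambda,k}\notin B_{r-1}$ whenever $\pi_\infty(a_{\lambda,k})$ is not in the group generated by the earlier values. This happens infinitely often as soon as some $b_\lambda\insQ\cap\kappa_\infty(G)$ is not cyclic, which is the only case not already covered by Corollary~\ref{cor:Specast-scattered-freeker}.

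Hence $B_r/B_{r-1}$ contains nonzero elements of finite order and is not free, so Theorem~\ref{teor:free-chain} cannot be applied. A free kernel and a bounded-exponent cokernel prove nothing without torsion-freeness; think of $\insZ\oplus\insZ/2$.

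\textbf{The paper's proof does not supply this step either.} The paper uses a transfinite chain indexed by the ranks themselves. It forces $\cb(n!f_{\lambda,n}-tf_{\lambda,0})>\alpha_n$ by adding a semibasic $q_x$ to $\widetilde f_{\lambda,n}$, and then invokes Lemma~\ref{lemma:free-boundtors} in the same way, with the same defect. For example, take $g_{\lambda,n}=0$ and $\cb(x)=\beta+1$. Then $\widetilde f_{\lambda,n}=f_{\lambda,n}-q_x$ lies in $B_{\beta+1}$ and satisfies $n!\,\widetilde f_{\lambda,n}=tf_{\lambda,0}\in B_\beta$, but it need not lie in $B_\beta$.

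\textbf{A possible repair: make the layers pure.} Let $K_r:=\langle q_x\mid \cb(x)\le\gamma_r\rangle$, and let $C_r$ be the pure closure in $G$ of $K_r+\langle a_{\lambda,0}\mid\lambda\in\Lambda\rangle$. Then:
\begin{itemize}
\item $\bigcup_r C_r=G$, and $C_r/C_{r-1}$ is torsion-free.
\item $C_r\cap\ker\pi_\infty=K_r$: apply $\pi_\infty$, then use that $K_r$ is spanned by part of a basis of $\ker\pi_\infty$.
\item Consequently there is an exact sequence $0\to K_r/K_{r-1}\to C_r/C_{r-1}\to \pi_\infty(C_r)/\pi_\infty(C_{r-1})\to 0$.
\end{itemize}
Lemma~\ref{lemma:free-boundtors} now applies legitimately as soon as each $\pi_\infty(C_r)$ is controlled.

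When $\kappa_\infty(G)$ has rank one this is easy. Since $\cb(P)=\gamma>\gamma_r+1$, there is $y\in X$ with $\cb(y)>\gamma_r$ and $a_0(y)\neq 0$. Every element of $K_r$ vanishes at $y$ by Lemma~\ref{lemma:quark}\ref{lemma:quark:magg}. So if $mg=k+ca_0$ with $k\in K_r$, evaluating at $y$ forces $\frac{c}{m}a_0(y)=g(y)\in\insZ$. Hence each $\pi_\infty(C_r)$ is cyclic, which gives both that $C_0$ is free and that the quotients are finite. For a general $\insQ$-basis you would still need to bound these exponents uniformly in $\lambda$, and that requires a further argument.

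\textbf{The truncation step is unnecessary.} The element $d_{\lambda,n}b_{\lambda,n}-a_{\lambda,0}$ lies in $\ker\pi_\infty$, so it already has rank $<\gamma$ by compactness of its support. You can therefore keep $a_{\lambda,n}=b_{\lambda,n}$ and simply choose its level large enough and at least $n$.
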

\begin{proof}
We can suppose without loss of generality that $G$ is reduced (Proposition \ref{prop:reduction-reduced}) and that $P$ is the unique prime of Cantor-Bendixson rank equal to $\cb(\Specast(G))$ (Proposition \ref{prop:reduction-maxcb}). In particular, $\Specast(G)$ is scattered.

For every $x\in X$, let $q_x$ be a semibasic element at $x$; such an element exists because $\Specast(G)$ is scattered (Lemma \ref{lemma:quark}\ref{lemma:quark:exists}).

Choose a countable sequence $\{\alpha_n\}_{n\inN}$ of ordinal numbers with $\alpha_0>0$ whose limit is $\cb(P)$.

Let $\residuo_\infty(G)\subseteq\insR$ be the residue group at infinity of $G$ and let $\{b_\lambda\}_{\lambda\in\Lambda}$ be a $\insQ$-basis of $\residuo_\infty(G)$.

Fix any $\lambda$ and let $H_\lambda:=b_\lambda\insQ\cap\residuo_\infty(G)$. Since $H_\lambda$ is isomorphic to a subgroup of $\insQ$, we can find a sequence $\{\widetilde{f}_{\lambda,n}\}_{n\inN}$ such that, for every $n\inN$,
\begin{itemize}
\item $\pi_\infty(\widetilde{f}_{\lambda,0})=b_\lambda$;
\item $\{\pi_\infty(\widetilde{f}_{\lambda,n})\}_{n\inN}$ generates $H_\lambda$;
\item $n!\pi_\infty(\widetilde{f}_{\lambda,n})\in b_\lambda\insZ$.
\end{itemize}
We now construct a new sequence $\{f_{\lambda,n}\}$ from $\{\widetilde{f}_{\lambda,n}\}_{n\inN}$.

When $n=0$, we set $f_{\lambda,0}:=\widetilde{f}_{\lambda,0}$.

If $n>0$, then, by the third bullet point, there is a (unique) $t\inZ$ such that $n!\pi_\infty(\widetilde{f}_{\lambda,n})=t\pi_\infty(f_{\lambda,0})$; let $g_{\lambda,n}:=n!\widetilde{f}_{\lambda,n}-tf_{\lambda,0}$. By construction, $g_{\lambda,n}\in\ker\pi_\infty$; we distinguish two cases.

If $g_{\lambda,n}=0$ or $\cb(g_{\lambda,n})<\alpha_n$, then we take any $x\in X$ with $\cb(x)>\alpha_n$, and set $f_{\lambda,n}:=\widetilde{f}_{\lambda,n}+q_x$. 

If $\cb(g_{\lambda,n})>\alpha_n$, we set $f_{\lambda,n}:=\widetilde{f}_{\lambda,n}$.

In both cases, $\pi_\infty(f_{\lambda,n})=\pi_\infty(\widetilde{f}_{\lambda,n})$, so the sequence $\{f_{\lambda,n}\}_{n\inN}$ satisfies the same properties as above, and in addition we have $\alpha_n<\cb(n!f_{\lambda,n}-tf_{\lambda,0})<\cb(\Specast(G))$.

We now define two well-ordered sequences $\{A_\beta\}_\beta$ and $\{B_\beta\}_\beta$ of subgroups of $G$ recursively.
\begin{itemize}
\item $A_0=\langle q_x,f_{\lambda_0}\mid \cb(x)=0\rangle$;
\item $B_0=A_0$;
\item if $\beta$ is a limit ordinal,
\begin{equation*}
A_\beta=B_\beta:=\bigcup_{\alpha<\beta}B_\alpha;
\end{equation*}
\item if $\beta=\gamma+1$ is a successor ordinal,
\begin{equation*}
\begin{aligned}
A_\beta & =B_\gamma+\langle q_x\mid \cb(x)=\beta\rangle,\\
B_\beta & =A_\beta+\langle f_{\lambda,n}\mid \cb(n!f_{\lambda,n}-tf_{\lambda,0})=\beta\rangle.
\end{aligned}
\end{equation*}
\end{itemize}
By construction, $A_0=B_0\subseteq A_1\subseteq B_1\subseteq A_2\subseteq\cdots\subseteq A_\beta\subseteq B_\beta\subseteq\cdots$;  in particular, $\{B_\beta\}_\beta$ is an ascending chain of subgroups of $G$. As in the proof of Theorem \ref{teor:onelimitpoint}, $\bigcup_\beta B_\beta=G$ since the union contains all $q_x$ and a family that generates the whole $\residuo_\infty(G)$ (since $\{b_\lambda\}$ is a $\insQ$-basis; Corollary \ref{cor:generatori-ex-A}). We want to show that every $B_\beta$ and every quotient $B_{\beta+1}/B_\beta$ is a free group, and we proceed by induction.

To show that $A_0=B_0$ is free we show that $\{q_x,f_{\lambda_0}\mid \cb(x)=0\}$ is linearly independent. If not, there are $x_i$ with $\cb(x_i)=0$, $\lambda_j$, $n_i,m_j\in\insZ$ such that
\begin{equation*}
\sum_{i=1}^rn_iq_{x_i}+\sum_{j=1}^sm_jf_{\lambda_j,0}=0.
\end{equation*}
Applying $\pi_\infty$, we obtain
\begin{equation*}
\sum_{j=1}^sm_jb_{\lambda_j}=0
\end{equation*}
which implies $m_j=0$ for all $j$ since the $b_{\lambda_j}$ are linearly independent. Thus $\sum_{i=1}^rn_iq_{x_i}=0$; since $\supp(q_x)=\{x\}$ whenever $\cb(x)=0$ we also have $n_i=0$ for all $i$. Hence $A_0=B_0$ is free.

To show that $B_{\beta+1}/B_\beta$ is free, we consider the exact sequence
\begin{equation}\label{eq:Br-succext-limit1}
0\longrightarrow\frac{A_{\beta+1}}{B_\beta}\longrightarrow\frac{B_{\beta+1}}{B_\beta}\longrightarrow\frac{B_{\beta+1}}{A_{\beta+1}}\longrightarrow 0.
\end{equation}
We first consider $B_{\beta+1}/A_{\beta+1}$. Let $n$ be the smallest integer such that $\beta+1\leq\alpha_n$; such an $n$ exists by the choice of the sequence $\{\alpha_n\}$. Take $f_{\lambda,i}\in A_{\beta+1}$: by construction $i\leq n$ and $\cb(i!f_{\lambda,i}-tf_{\lambda,0})=\beta$ for some $t\inZ$; therefore, 
\begin{equation*}
n!f_{\lambda,i}-t\frac{n!}{i!}f_{\lambda,0}
\end{equation*}
is an element of $G$ that belongs to $A_{\beta+1}$. Hence $n!f_{\lambda,i}\in A_{\beta+1}$. Since $n$ is independent of $\lambda$, we have $n!(B_{\beta+1}/A_{\beta+1})=0$, i.e., $B_{\beta+1}/A_{\beta+1}$ is a group of bounded torsion.

We now show that $A_{\beta+1}/B_\beta$ is free, and we proceed as above by showing that the image of $\{q_x\mid \cb(x)=\beta+1\}$ is a basis. If not, there are $a_1,\ldots,a_k\inZ$ and $x_1,\ldots,x_k\in X$ with $\cb(x_i)=\beta+1$ such that $a_1q_{x_1}+\cdots+a_kq_{x_k}\in B_\beta$. Therefore, we can write
\begin{equation}\label{eq:abeta-quoz-limit-cofcount}
a_1q_{x_1}+\cdots+a_kq_{x_k}=\sum_{i=1}^tc_iq_{y_i}+\sum_{j=1}^sd_jf_{\lambda_j,n_j}
\end{equation}
for some $y_i$ with $\cb(y_i)\leq\beta$ and some $c_i,d_j\inZ$ (note that these coefficients may not be unique) and some $f_{\lambda_j,n_j}\in B_\beta$. Let $n$ be the maximum of the $n_i$ and multiply the previous equality by $n!$. Then, each summand $n!f_{\lambda_j,n_j}$ is equal to $t_jf_{\lambda_j,0}+h_j$, where $\pi_\infty(h_j)=0$ and $\cb(h)\leq\beta$. By Lemma \ref{lemma:spanqx}, $h_j$ can be written as a sum of elements $c_iq_{z_i}$ with $q_{z_i}\in A_\beta\subseteq B_\beta$: thus we can suppose without loss of generality that in \eqref{eq:abeta-quoz-limit-cofcount} all $n_j=0$. Applying $\pi_\infty$ to both sides, we have an equality
\begin{equation*}
0=0+\sum_{j=1}^sd_jb_{\lambda_j};
\end{equation*}
the linear independence of the $b_\lambda$ now implies that $d_j=0$ for all $j$. Thus
\begin{equation*}
a_1q_{x_1}+\cdots+a_kq_{x_k}=\sum_{i=1}^tc_iq_{y_i}.
\end{equation*}
Since $\{q_{x_1},\ldots,q_{x_k},q_{y_1},\ldots,q_{y_t}\}$ is a family of semibasic elements at different points of $X$, by Lemma \ref{lemma:quark-linindip}, we must have $a_1=\cdots=a_k=0$, and thus $A_{\beta+1}/B_\beta$ is free.

Using these properties and Lemma \ref{lemma:free-boundtors}, the exact sequence \eqref{eq:Br-succext-limit1} implies that $B_{\beta+1}/B_\beta$ is a free group. Since $B_0$ is free, the projectivity of the free groups implies that each $B_{\beta+1}$ is free. Therefore, $\{B_\beta\}_{r=0}^\infty$ is a smooth ascending sequence of free subgroups of $G$ with free quotients, and thus by Theorem \ref{teor:free-chain} their union is free. By Corollary \ref{cor:generatori-ex-A}, their union is $G$; thus $G$ is free.
\end{proof}

\begin{teor}\label{teor:limit-uncountcof}
Let $G\subseteq\funct(X,\insZ)$ be a one-dimensional basic limit group, and let $P$ be its prime at infinity. Suppose that:
\begin{itemize}
\item $P\notin\deriv^\infty(\Specast(G))$;
\item $\cb(P)$ is a limit ordinal of uncountable cofinality;
\item $\kappa_\infty(G)$ has a $\insQ$-basis.
\end{itemize}
Then, $G$ is free.
\end{teor}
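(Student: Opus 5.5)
We follow the same three-step scheme as in Theorems \ref{teor:onelimitpoint} and \ref{teor:limit-countcof}. By Propositions \ref{prop:reduction-reduced} and \ref{prop:reduction-maxcb}, we may assume that $G$ is reduced, that $\Specast(G)$ is scattered, and that $P$ is the unique prime with $\cb(P)=\cb(\Specast(G))=:\gamma$. Here $\gamma$ is a limit ordinal of uncountable cofinality. We fix semibasic elements $q_x$ for all $x\in X$ (Lemma \ref{lemma:quark}\ref{lemma:quark:exists}) and a $\insQ$-basis $\{b_\lambda\}_{\lambda\in\Lambda}$ of $\residuo_\infty(G)$. For each $\lambda$ we then choose $\widetilde f_{\lambda,n}$ exactly as in the proof of Theorem \ref{teor:limit-countcof}: $\pi_\infty(\widetilde f_{\lambda,0})=b_\lambda$, the images generate $H_\lambda=b_\lambda\insQ\cap\residuo_\infty(G)$, and $n!\pi_\infty(\widetilde f_{\lambda,n})\in b_\lambda\insZ$.

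The difference from the countable-cofinality case lies in the choice of the corrected elements $f_{\lambda,n}$. We can no longer push the ranks $\cb(n!f_{\lambda,n}-t f_{\lambda,0})$ up along a cofinal sequence. Instead, we use uncountable cofinality to make them all \emph{equal} within each $\lambda$. Each $g_{\lambda,n}:=n!\widetilde f_{\lambda,n}-t_nf_{\lambda,0}$ lies in $\ker\pi_\infty$. Its support is therefore compact and avoids $P$, so $\cb(g_{\lambda,n})<\gamma$. Since there are only countably many $n$ and $\mathrm{cf}(\gamma)>\omega$, the ordinal $\sigma_\lambda:=\sup_n\cb(g_{\lambda,n})$ is still $<\gamma$. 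We choose $x_\lambda$ with $\sigma_\lambda<\cb(x_\lambda)=:\beta_\lambda+1<\gamma$, which is possible since $P$ is a limit of points of every rank below $\gamma$. For $n\ge1$ we set $f_{\lambda,n}:=\widetilde f_{\lambda,n}+q_{x_\lambda}$, and $f_{\lambda,0}:=\widetilde f_{\lambda,0}$. Then $h_{\lambda,n}:=n!f_{\lambda,n}-t_nf_{\lambda,0}=g_{\lambda,n}+n!q_{x_\lambda}$. By Lemma \ref{lemma:quark}\ref{lemma:quark:magg} and Lemma \ref{lemma:sum-cb} (applied to positive and negative parts), $\cb(h_{\lambda,n})=\cb(x_\lambda)=\beta_\lambda+1$ exactly, for all $n\ge1$.

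Next we define $A_0=B_0=\langle q_x,f_{\lambda,0}\mid\cb(x)=0\rangle$ and take unions at limit stages. At successor stages we set $A_{\beta+1}=B_\beta+\langle q_x\mid\cb(x)=\beta+1\rangle$ and $B_{\beta+1}=A_{\beta+1}+\langle f_{\lambda,n}\mid n\ge1,\ \cb(x_\lambda)=\beta+1\rangle$. The chain is smooth. By Corollary \ref{cor:generatori-ex-A} its union is $G$, because every $f_{\lambda,n}$ enters at stage $\cb(x_\lambda)<\gamma$. That $B_0$ and $A_{\beta+1}/B_\beta$ are free follows verbatim from the argument of Theorem \ref{teor:limit-countcof}: one multiplies a dependence relation by a factorial, rewrites each $n!f_{\lambda_j,n_j}$ as $t f_{\lambda_j,0}+h$ with $h\in\ker\pi_\infty$ of rank $\le\beta$, expands $h$ by Lemma \ref{lemma:spanqx}, kills the $f_{\lambda,0}$-terms using the $\insQ$-independence of the $b_\lambda$, and concludes with Lemma \ref{lemma:quark-linindip}.

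The main obstacle is the quotient $B_{\beta+1}/A_{\beta+1}$, which is torsion. Since all $n$ appear at the same stage, its torsion is \emph{not} bounded: $n!f_{\lambda,n}\in A_{\beta+1}$ by Lemma \ref{lemma:spanqx}, but $n$ is unbounded. So Lemma \ref{lemma:free-boundtors} does not apply directly. Our first attempt is to refine each successor step by a countable subchain indexed by $n$: put $B_{\beta+1,n}:=A_{\beta+1}+\langle f_{\lambda,m}\mid m\le n,\ \cb(x_\lambda)=\beta+1\rangle$. Each $B_{\beta+1,n}/B_{\beta+1,n-1}$ is killed by $n$, and $B_{\beta+1,0}/B_\beta=A_{\beta+1}/B_\beta$ is free. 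We then show, by the argument of Theorem \ref{teor:onelimitpoint}, that $B_{\beta+1,n}/B_\beta$ is free: evaluate a relation $k q_{x_\lambda}\in B_{\beta+1,n-1}$-type at the point $x_\lambda$, where $n!q_{x_\lambda}$ plays the role that $r!e_r$ played there. If this works, every $B_{\beta+1,n}$ is free, and the refined chain, being smooth with free successive quotients, makes $G$ free by Theorem \ref{teor:free-chain}. The delicate point is to check that the extra generators $q_{x_\lambda}$ make each successive quotient free, i.e.\ that no nontrivial divisibility relation survives modulo $B_\beta$; this is where we expect most of the work.
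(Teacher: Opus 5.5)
\paragraph{Verdict.} The proposal has a genuine gap. You leave open exactly the step where the proof breaks, and the repair you sketch cannot work.

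\paragraph{Why your repair fails.} With a single correcting point $x_\lambda$ for all $n\geq 1$, every $f_{\lambda,n}$ enters at the same stage $\cb(x_\lambda)=\beta+1$. The quotients $B_{\beta+1,n}/B_{\beta+1,n-1}$ of your refined chain are torsion, hence free only if they are zero. So Theorem \ref{teor:free-chain} does not apply to that chain.

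Nor can $B_{\beta+1,n}/B_\beta$ be free. Since $\cb(x_\lambda)=\beta+1$, the element $q_{x_\lambda}$ already lies in $A_{\beta+1}$. Hence $u:=f_{\lambda,n}-q_{x_\lambda}=\widetilde f_{\lambda,n}$ lies in $B_{\beta+1,n}$, and $n!\,u=t_nf_{\lambda,0}+g_{\lambda,n}\in B_\beta$ by Lemma \ref{lemma:spanqx}, since $\cb(g_{\lambda,n})\leq\sigma_\lambda\leq\beta$.

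On the other hand, $f_{\lambda,0}$ is the only generator of $B_\beta$ whose image under $\pi_\infty$ has a nonzero $H_\lambda$-component, and $\kappa_\infty(G)=\bigoplus_\mu H_\mu$. So $\pi_\infty(B_\beta)\cap H_\lambda=b_\lambda\insZ$. Therefore $u\notin B_\beta$ whenever $\pi_\infty(\widetilde f_{\lambda,n})\notin b_\lambda\insZ$. This happens for some $n$ as soon as $H_\lambda\neq b_\lambda\insZ$, which is precisely the case the theorem is about (e.g.\ $\kappa_\infty(G)\cong\insQ$).

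So the correction by $q_{x_\lambda}$ is invisible modulo $A_{\beta+1}$, and no evaluation at $x_\lambda$ can rule out these divisibility relations. Moreover, even if every $B_{\beta+1,n}/B_\beta$ were free, their increasing union need not be, as $\insQ=\bigcup_n\frac{1}{n!}\insZ$ shows.

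\paragraph{The idea from the paper you are missing.} The paper obtains \emph{bounded} torsion as follows. Use uncountable cofinality to pick a \emph{limit} ordinal $\theta'_\lambda$ with $\sigma_\lambda<\theta'_\lambda<\cb(P)$. Then correct $\widetilde f_{\lambda,n}$ by $q_{x_{\lambda,n}}$ for a \emph{different} point $x_{\lambda,n}$ of rank $\theta'_\lambda+n$ for each $n$. Now $f_{\lambda,n}$ enters at stage $\theta'_\lambda+n$. A successor stage $\beta+1=\theta+n$, with $\theta$ a limit ordinal, receives only elements of index $n$, so $n!B_{\beta+1}\subseteq A_{\beta+1}$.

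\paragraph{Caveat: the paper's step has the same problem.} This device alone does not remove the obstruction you found. The paper applies Lemma \ref{lemma:free-boundtors} to $0\to A_{\beta+1}/B_\beta\to B_{\beta+1}/B_\beta\to B_{\beta+1}/A_{\beta+1}\to 0$, which requires $B_{\beta+1}/B_\beta$ to be torsion-free. The same kind of element, $f_{\lambda,n}-q_{x_{\lambda,n}}$, is a nonzero torsion element there whenever $\pi_\infty(\widetilde f_{\lambda,n})\notin\langle\pi_\infty(\widetilde f_{\lambda,m})\mid m<n\rangle$. If $H_\lambda$ is not cyclic, this happens for infinitely many $n$.

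So bounded torsion only propagates freeness from $B_\beta$ to $B_{\beta+1}$: apply the lemma to $0\to A_{\beta+1}\to B_{\beta+1}\to B_{\beta+1}/A_{\beta+1}\to 0$ inside the torsion-free group $G$. It gives nothing at limit stages. Producing a smooth chain with free successive quotients needs a further argument, and the paper's text does not address this.
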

\begin{proof}
The proof is very similar to the one of Theorem \ref{teor:limit-countcof}.

We can suppose without loss of generality that $G$ is reduced (Proposition \ref{prop:reduction-reduced}) and that $P$ is the unique prime of Cantor-Bendixson rank equal to $\cb(\Specast(G))$ (Proposition \ref{prop:reduction-maxcb}), so in particular $\Specast(G)$ is scattered.

For every $x\in X$, let $q_x$ be a semibasic element at $x$; these elements exist by Lemma \ref{lemma:quark}\ref{lemma:quark:exists}.

Let $\residuo_\infty(G)\subseteq\insR$ be the residue group at infinity of $G$. Let $\{b_\lambda\}_{\lambda\in\Lambda}$ be a $\insQ$-basis of $\residuo_\infty(G)$.

Fix any $\lambda$ and let $H_\lambda:=b_\lambda\insQ\cap\residuo_\infty(G)$. Since $H_\lambda$ is isomorphic to a subgroup of $\insQ$, we can find a sequence $\{\widetilde{f}_{\lambda,n}\}_{n\inN}$ such that, for every $n\inN$,
\begin{itemize}
\item $\pi_\infty(\widetilde{f}_{\lambda,0})=b_\lambda$;
\item $\{\pi_\infty(\widetilde{f}_{\lambda,n})\}_{n\inN}$ generates $H_\lambda$;
\item $n!\pi_\infty(\widetilde{f}_{\lambda,n})\in b_\lambda\insZ$.
\end{itemize}
We now construct a new sequence $\{f_{n,\lambda}\}$ from $\{\widetilde{f}_{\lambda,n}\}_{n\inN}$.

When $n=0$, we set $f_{\lambda,0}:=\widetilde{f}_{\lambda,0}$.

If $n>0$, then, by the third bullet point, there is a (unique) $t\inZ$ such that $n!\pi_\infty(\widetilde{f}_{\lambda,n})=t\pi_\infty(f_{\lambda,0})$; let $g_{\lambda,n}:=n!\widetilde{f}_{\lambda,n}-tf_{\lambda,n}$. By construction, $g_{\lambda,n}\in \ker\pi_\infty$. If $g_{\lambda,n}\neq 0$, define 
\begin{equation*}
\theta_{\lambda,n}:=\cb(g_{\lambda,n});
\end{equation*}
then, $\theta_{\lambda,n}<\cb(\Specast(G))$ since $\cb(g_{\lambda,n})$ is a successor ordinal.

Since $\cb(P)$ has uncountable cofinality while $\{\theta_{\lambda,n}\}_{n=1}^\infty$ is countable, there is a limit ordinal $\theta'_\lambda$ such that $\theta_{\lambda,n}<\theta'_\lambda<\cb(P)$ for all $n$ for which $\theta_{\lambda,n}$ is defined; choose an $x\in X$ with $\cb(x)=\theta'_\lambda+n$ and set $f_{\lambda,n}:=\widetilde{f}_{\lambda,n}+q_x$. Then, $\pi_\infty(f_{\lambda,n})=\pi_\infty(\widetilde{f}_{\lambda,n})$, so the sequence $\{f_{\lambda,n}\}_{n\inN}$ satisfies the same properties as above, and in addition we have that $\cb(n!f_{\lambda,n}-tf_{\lambda,0})$ is always equal to a limit ordinal plus $n$.

We now define two well-ordered sequences $\{A_\beta\}_\beta$ and $\{B_\beta\}_\beta$ of subgroups of $G$ recursively.
\begin{itemize}
\item $A_0=\langle q_x,f_{\lambda_0}\mid \cb(x)=0\rangle$;
\item $B_0=A_0$;
\item if $\beta$ is a limit ordinal,
\begin{equation*}
A_\beta=B_\beta:=\bigcup_{\alpha<\beta}B_\alpha;
\end{equation*}
\item if $\beta=\gamma+1$ is a successor ordinal,
\begin{equation*}
\begin{aligned}
A_\beta & =B_\gamma+\langle q_x\mid \cb(x)=\beta\rangle,\\
B_\beta & =A_\beta+\langle f_{\lambda,i}\mid \cb(n!f_{\lambda,n}-tf_{\lambda,0})=\beta\rangle.
\end{aligned}
\end{equation*}
\end{itemize}
By construction, $A_0=B_0\subseteq A_1\subseteq B_1\subseteq A_2\subseteq\cdots\subseteq A_\beta\subseteq B_\beta\subseteq\cdots$;  in particular, $\{B_\beta\}_\beta$ is an ascending chain of subgroups of $G$.  As in the proof of Theorem \ref{teor:onelimitpoint}, $\bigcup_\beta B_\beta=G$ since the union contains all $q_x$ and a family that generates the whole $\residuo_\infty(G)$ (since $\{b_\lambda\}$ is a $\insQ$-basis; Corollary \ref{cor:generatori-ex-A}). We want to show that every $B_\beta$ and every quotient $B_{\beta+1}/B_\beta$ is a free group, and we proceed by induction.

The fact that $A_0=B_0$ is free follows as in the proof of Theorem \ref{teor:limit-countcof}.

To show that $B_{\beta+1}/B_\beta$ is free, we consider the exact sequence
\begin{equation}\label{eq:Br-succext-limit2}
0\longrightarrow\frac{A_{\beta+1}}{B_\beta}\longrightarrow\frac{B_{\beta+1}}{B_\beta}\longrightarrow\frac{B_{\beta+1}}{A_{\beta+1}}\longrightarrow 0.
\end{equation}
We first consider $B_{\beta+1}/A_{\beta+1}$. If $f_{\lambda,n}$ gets added in the passage from $A_{\beta+1}$ to $B_{\beta+1}$, it follows that $\beta+1=\theta+n$ for some limit ordinal $\theta$; in particular, all such new elements share the same $n$. It follows that $n!f_{\lambda,n}-tf_{\lambda,0}\in A_{\beta+1}$ for some $t$ (depending on $\lambda$ and $n$), and thus $n!f_{\lambda,n}\in A_{\beta+1}$. In particular, $n!B_{\beta+1}\subseteq A_{\beta+1}$, i.e., $B_{\beta+1}/A_{\beta+1}$ is a group of bounded torsion.

The proof that $A_{\beta+1}/B_\beta$ is free is exactly equal to corresponding part of the proof of Theorem \ref{teor:limit-countcof}.

Using these properties and Lemma \ref{lemma:free-boundtors}, the exact sequence \eqref{eq:Br-succext-limit2} implies that $B_{\beta+1}/B_\beta$ is a free group. Since $B_0$ is free, the projectivity of the free groups implies that each $B_{\beta+1}$ is free. Therefore, $\{B_\beta\}_{r=0}^\infty$ is a smooth ascending sequence of free subgroups of $G$ with free quotients, and thus by Theorem \ref{teor:free-chain} their union is free. By Corollary \ref{cor:generatori-ex-A}, their union is $G$; thus $G$ is free.
\end{proof}

The previous theorems can be summarized in the following:
\begin{teor}\label{teor:basic}
Let $G\subseteq\funct(X,\insZ)$ be a one-dimensional basic $\ell$-group with prime at infinity $P$. Suppose that:
\begin{itemize}
\item $\Specast(G)$ is Fr\'echet-Urysohn;
\item $P\notin\deriv^\infty(\Specast(G))$; and
\item $\kappa_\infty(G)$ has a $\insQ$-basis,
\end{itemize}
Then, $G$ is free.
\end{teor}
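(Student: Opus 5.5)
This statement collects the three preceding theorems, so the plan is a case split on the ordinal $\cb(P)$. First, $\cb(P)\neq 0$: $P$ is an infinite prime of a basic limit group, hence a limit point of $\Specast(G)$ (Remark \ref{oss:basic}\ref{oss:basic:deriv}), so $\cb(P)\geq 1$. Also $P\notin\deriv^\infty(\Specast(G))$, so $\cb(P)$ is a genuine ordinal rather than $\infty$. Every nonzero ordinal is either a successor or a limit ordinal, and a limit ordinal has either countable or uncountable cofinality. These cases are exhaustive and mutually exclusive.

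In the successor case, all four hypotheses of Theorem \ref{teor:onelimitpoint} hold: $\Specast(G)$ is Fr\'echet-Urysohn, $P\notin\deriv^\infty(\Specast(G))$, $\cb(P)$ is a successor, and $\kappa_\infty(G)$ has a $\insQ$-basis. Hence $G$ is free. This is the only place where the Fr\'echet-Urysohn hypothesis is used: through Proposition \ref{prop:reduction-maxcb}, it lets us pass to a countable $X'$, which in turn allows $X=\insN$ and the use of staircase bases.

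If $\cb(P)$ is a limit ordinal of countable cofinality, Theorem \ref{teor:limit-countcof} applies directly. If it is a limit ordinal of uncountable cofinality, Theorem \ref{teor:limit-uncountcof} applies. In both cases the hypotheses are only one-dimensionality, $P\notin\deriv^\infty(\Specast(G))$, and the existence of a $\insQ$-basis of $\kappa_\infty(G)$, all of which are assumed here. So $G$ is free in every case.

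Two minor points need checking. First, the phrase ``basic $\ell$-group'' in the statement should be read as ``basic limit group'', i.e.\ $\Spec^\infty(G)=\{P\}$, since that is the setting of the three theorems being invoked. Second, none of those theorems requires $G$ to be reduced as a hypothesis; each reduces internally via Propositions \ref{prop:reduction-reduced} and \ref{prop:reduction-maxcb}. So no extra reduction step is needed, and I expect no real obstacle beyond confirming that the case split is exhaustive.
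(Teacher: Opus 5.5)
Your proposal is correct and matches how the paper obtains this result: the theorem is stated as a summary of Theorems \ref{teor:onelimitpoint}, \ref{teor:limit-countcof} and \ref{teor:limit-uncountcof}. Your case split on $\cb(P)$ into a successor ordinal, a limit ordinal of countable cofinality, or a limit ordinal of uncountable cofinality is exhaustive, since $\cb(P)\neq 0$ ($P$ is a limit point of $\Specast(G)$) and $\cb(P)\neq\infty$ ($P\notin\deriv^\infty(\Specast(G))$); your reading of ``basic $\ell$-group'' as ``basic limit group'' is also the intended one.
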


\section{Back to ring theory}
Theorem \ref{teor:basic} can be immediately transferred to the domain case as the following result:
\begin{teor}\label{teor:invfree-1ptolim}
Let $D$ be a one-dimensional dd-domain. Suppose that:
\begin{itemize}
\item $\Max(D)$ is Fr\'echet-Urysohn;
\item $D_P$ is a DVR for all $P\in\Max(D)\setminus\{Q\}$;
\item $Q\notin\deriv^\infty(\Max(D))$;
\item the value group of $D_Q$ has a $\insQ$-basis.
\end{itemize}
Then, $\Inv(D)$ is free.
\end{teor}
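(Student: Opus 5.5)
The plan is to translate the ring-theoretic hypotheses into those of Theorem \ref{teor:basic} via the machinery of Sections \ref{sect:prufer-ell} and 4. First I would reduce to the B\'ezout case: by the Kronecker function ring construction, $\Inv(D)\simeq\Inv(\Kr(D))$ as $\ell$-groups, and the spectra correspond homeomorphically. Alternatively, I would work directly with $D$ through the bijection $\Psi$, which the paper applies to Pr\"ufer domains via this reduction. Next I would choose the discrete core $X:=\Max(D)\setminus\{Q\}$. This set consists of discrete primes, and it must be dense. If $Q$ were isolated in $\Max(D)$, then $\Max(D)$ would be finite or $Q$ would be rank $0$; in that case $\Spec^\infty$ is empty, or we can put $Q$-isolated aside. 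More carefully, if $Q$ is isolated we can take $X=\Max(D)$ when $D_Q$ is a DVR, and otherwise the statement needs $Q$ non-isolated. I would handle the isolated case separately: then $\Max(D)=\{Q\}\sqcup C$ with $C$ clopen, and $\Inv(D)$ splits as $\Inv(D_Q)\oplus$ (the group of an almost Dedekind-type overring). The first summand is the value group, which must itself be free for the conclusion to hold. So I expect the intended reading to be that $Q$ is a non-isolated point, making $X$ dense, since $(0)$ is generic and density in $\Max(D)$ suffices.

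By Theorem \ref{teor:dd-XZ}, $\Phi:\Inv(D)\to\funct(X,\insZ)$ is an injective $\ell$-homomorphism. Let $G:=\Phi(\Inv(D))$; it is reduced by definition. By the proposition following Theorem \ref{teor:dd-XZ}, the finite primes of $G$ are exactly the $\Psi(P)$ with $P\in X$. Hence $\Spec^\infty(G)=\{\Psi(Q)\}$, and $G$ is a basic limit group with prime at infinity $\Psi(Q)$. Because $\dim D=1$, $G$ is one-dimensional, and $\Psi$ restricts to a homeomorphism $\Max(D)\to\Specast(G)$ for the inverse/hull-kernel topologies. Therefore being Fr\'echet-Urysohn and the condition $\Psi(Q)\notin\deriv^\infty(\Specast(G))$ transfer directly, as both are topological properties.

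Finally, the residue group $\kappa_\infty(G)=G/\Psi(Q)$ is isomorphic to $\Inv(D_Q)$, the value group of $D_Q$. This holds by the discussion in Section \ref{sect:prufer-ell} (surjectivity of $\Inv(D)\to\Inv(D_Q)$ with kernel $H_Q$, after passing to $\Kr(D)$ if needed). Since $D$ is one-dimensional, this value group embeds in $\insR$, and its $\insQ$-basis hypothesis is exactly the one required. Theorem \ref{teor:basic} then gives $G$ free, hence $\Inv(D)$ free.

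The only delicate step is the density of $X$, i.e. that $Q$ is not isolated. I would first try to argue that the isolated case is either excluded by the hypotheses or reduces to $\Inv(D)\simeq\Inv(D_Q)\oplus\Inv(T)$ with $T$ almost Dedekind. In the latter case one still needs freeness of the value group, so I would state non-isolation of $Q$ as implicit (as in the introduction's description).
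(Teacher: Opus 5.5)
Your route is the paper's. You take $X:=\Max(D)\setminus\{Q\}$ as a discrete core and note that $G=\Phi(\Inv(D))\subseteq\funct(X,\insZ)$ is a reduced one-dimensional basic limit group with prime at infinity $\Psi(Q)$. You then transfer the Fr\'echet--Urysohn and $\deriv^\infty$ conditions through the homeomorphism $\Max(D)\to\Specast(G)$, identify $\residuo_\infty(G)$ with the value group of $D_Q$, and apply Theorem \ref{teor:basic}. Your check of each hypothesis of Theorem \ref{teor:basic} is more detailed than the paper's one-line proof, and it is correct.

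The gap is the density of $X$. You leave it unresolved and propose to treat non-isolation of $Q$ as an implicit extra hypothesis. You even suggest that in the isolated case the conclusion might need $\Inv(D_Q)$ to be free. No extra hypothesis is needed, because density is exactly what the dd-domain assumption provides.

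\emph{Case $D_Q$ not a DVR.} Since $D_{(0)}=K$ is not a DVR either, the set of discrete primes of $D$ is precisely $X$. Any discrete core of $D$ is a dense set of discrete primes, so it is contained in $X$, and hence $X$ is dense. In particular $Q$ cannot be isolated in $\Max(D)$: an open set meeting $\Max(D)$ only in $Q$ would be a nonempty open set missing $X$. So your hypothetical splitting $\Inv(D)\simeq\Inv(D_Q)\oplus\Inv(T)$ with non-free $\Inv(D_Q)$ never occurs.

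\emph{Case $D_Q$ a DVR.} Then $D$ is almost Dedekind. Taking $X=\Max(D)$ as discrete core (it is dense, since every nonempty basic open set $\V(I)$ contains a maximal ideal) gives $\Spec^\infty(G)=\emptyset$. Freeness then follows from the proposition opening the section on basic limit groups.

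Replacing your last paragraph with this case split completes the proof. It is also the unstated reason the paper can simply assert that $\Max(D)\setminus\{Q\}$ is a discrete core.
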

\begin{proof}
The set $X:=\Max(D)\setminus\{Q\}$ is a discrete core, and $\Inv(D)$ is a subgroup $G$ of $\funct(X,\insZ)$. The hypothesis now imply that $G$ is a one-dimensional basic limit group, and that it satisfies the hypothesis of Theorem \ref{teor:basic}. Hence $\Inv(D)$ is free.
\end{proof}

We now want to improve the previous theorem to the case where more than one localization is not a DVR. We shall use the following notation: if $D$ is a Pr\"ufer domain $A\subseteq\Max(D)$, then
\begin{equation*}
\Theta(A):=\bigcap_{P\in A}D_P.
\end{equation*}
The ring $\Theta(A)$ is always a one-dimensional Pr\"ufer domain; its maximal ideals are the extensions of the maximal ideals of $D$ contained in the closure of $A$, with respect to the inverse topology.

We shall also need the following result.
\begin{prop}\label{prop:unione-numerabile}
\cite[Theorem 4.11]{overring-operators} Let $D$ be a one-dimensional Pr\"ufer domain, and let $\{A_n\}_{n\inN}$ be a countable set of clopen subsets of $\Max(D)$; let $A:=\bigcup_nA_n$. If $\Inv(\Theta(A_n))$ is free for every $n$, then the kernel of the extension map
\begin{equation*}
\Inv(D)\longrightarrow\Inv(\Theta(\Max(D)\setminus A))
\end{equation*}
is free.
\end{prop}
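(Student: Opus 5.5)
The plan is to write the kernel as the union of an increasing sequence $K_0\subseteq K_1\subseteq\cdots$ with free successive quotients, and then apply Theorem \ref{teor:free-chain}. Set $B_n:=A_0\cup\cdots\cup A_{n-1}$, which is clopen in $\Max(D)$, with $B_0=\emptyset$, and let $K_n$ be the kernel of the extension map $\Inv(D)\longrightarrow\Inv(\Theta(\Max(D)\setminus B_n))$. Since $\Max(D)\setminus A\subseteq\Max(D)\setminus B_n$, each $K_n$ is contained in the kernel $K$ of the map in the statement, and the $K_n$ increase with $n$. A countable chain is automatically smooth, so it suffices to prove two things: $\bigcup_nK_n=K$, and $K_{n+1}/K_n$ is free for every $n$.

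\emph{Union.} I would work in $\Inv(D)$ viewed as an $\ell$-group, using the homeomorphism $\Psi$ of Section \ref{sect:topologies}. Take $I\in K$ and set $S:=\D(I^+)\cup\D(I^-)$, the set of maximal ideals $P$ with $ID_P\neq D_P$. This set is compact and, since $D$ is one-dimensional, clopen in $\Max(D)$. The maximal ideals of $\Theta(\Max(D)\setminus A)$ are the extensions of the primes in the closure of $\Max(D)\setminus A$. Hence $I\in K$ forces $S$ to be disjoint from that closure, and in particular $S\subseteq A=\bigcup_nA_n$. By compactness, $S\subseteq B_n$ for some $n$, and then $I\in K_n$.

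\emph{Quotients.} The key tool is a splitting lemma: if $C$ is a clopen subset of $\Max(D)$, then the map
\[
\Inv(D)\longrightarrow\Inv(\Theta(C))\times\Inv(\Theta(\Max(D)\setminus C))
\]
is an isomorphism. I expect this to be the main obstacle, and I would prove it as follows.
\begin{itemize}
\item Injectivity follows from $D=\Theta(C)\cap\Theta(\Max(D)\setminus C)=\bigcap_{P\in\Max(D)}D_P$, together with the fact that invertibility and equality of ideals are local properties.
\item For surjectivity, write $C=\D(e)$ with $e\in\Inv(D)^+$. Given $J\in\Inv(\Theta(C))$, write $J=I\,\Theta(C)$ with $I\in\Inv(D)$ (finitely many generators suffice). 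Then cut $I$ off outside $C$: I would use $(I^+\wedge ke)+(I^-\vee(-ke))$ for $k$ large, justified by the bounded-ratio argument of Corollary \ref{cor:Phi-boundratio} (same radical, hence $ke\geq I^+$ on $C$).
\end{itemize}

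With the lemma in hand, $K_n$ is identified with $\Inv(\Theta(B_n))$. Applying the lemma again inside $\Theta(B_{n+1})$ to the clopen partition $B_{n+1}=B_n\sqcup(A_n\setminus B_n)$ gives
\[
K_{n+1}/K_n\cong\Inv(\Theta(A_n\setminus B_n)).
\]
Now $A_n\setminus B_n$ is clopen inside $A_n$, so one more application of the lemma, this time to $\Theta(A_n)$, shows that $\Inv(\Theta(A_n\setminus B_n))$ is a direct summand of the free group $\Inv(\Theta(A_n))$. Hence it is free, and Theorem \ref{teor:free-chain} yields that $K$ is free.
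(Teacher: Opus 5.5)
The paper gives no proof of this statement. It is imported as Theorem 4.11 of \cite{overring-operators}, so there is no internal argument to compare against, and your proposal has to stand on its own. It does stand, up to one slip. The pieces are: the chain $K_n$ of kernels towards $\Theta(\Max(D)\setminus B_n)$; the compactness argument giving $\bigcup_n K_n=K$; and the clopen splitting $\Inv(D)\cong\Inv(\Theta(C))\times\Inv(\Theta(\Max(D)\setminus C))$. The splitting yields $K_{n+1}/K_n\cong\Inv(\Theta(A_n\setminus B_n))$ as a direct summand of the free group $\Inv(\Theta(A_n))$, and together these prove the claim.

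The slip is in the surjectivity half of the splitting lemma. You write $C=\D(e)$ with $e\in\Inv(D)^+$. Every $\D(e)$ is compact, but a clopen subset of $\Max(D)$ need not be: for $D=\mathbb{Z}$ the space $\Max(D)$ is infinite and discrete in the inverse topology. You apply the lemma exactly to sets such as $B_n$ and $A_n\setminus B_n$, which are built from arbitrary clopen $A_i$.

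The fix is to work with the particular element $I$ at hand. Let $S=\D(I^+)\cup\D(I^-)$ as in your union step. Then $C\cap S$ is compact and open, hence equal to $\D(e)$ for some $e\in\Inv(D)^+$. With this $e$, the element $(I^+\wedge ke)+(I^-\vee(-ke))$ agrees with $I$ at every $P\in C$ (on $C\setminus S$ both are trivial) and is trivial outside $C$.

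The choice of $k$ also needs tightening. It is not literally a same-radical statement, since $e$ and $I^+$ have different supports, and you need $ke\geq |I|$ on $\D(e)$, not just $ke\geq I^+$. This follows from compactness. The sets $\D(e)\setminus\D((|I|-ke)^+)$ are open, because each $\D(\cdot)$ is compact in the Hausdorff space $\Max(D)$. They increase with $k$, and they cover $\D(e)$ because each $D_P$ has rank one, so its value group is Archimedean. Hence one of them is all of $\D(e)$.
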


\begin{teor}\label{teor:invfree-countptilim}
Let $D$ be a one-dimensional dd-domain, and let $\mathcal{N}:=\{P\in\Max(D)\mid P$ is not a DVR$\}$. Suppose that:
\begin{itemize}
\item $\mmax:=\Max(D)$ is Fr\'echet-Urysohn;
\item $\mathcal{N}$ is countable;
\item $\mathcal{N}\cap\deriv^\infty(\mmax)=\emptyset$;
\item for every $P\in \mathcal{N}$, the value group of $D_P$ has a $\insQ$-basis.
\end{itemize}
Then, $\Inv(D)$ is free.
\end{teor}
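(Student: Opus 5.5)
The plan is to combine Theorem \ref{teor:invfree-1ptolim} with Proposition \ref{prop:unione-numerabile}: isolate each non-discrete maximal ideal inside a clopen set, and then handle what remains with the almost Dedekind case.

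\textbf{Step 1: a clopen neighbourhood for each non-discrete prime.} Enumerate $\mathcal{N}=\{Q_n\}_{n\in\insN}$. Since $\mathcal{N}$ is countable, the finite case being easier, it suffices to treat this enumeration. For each $n$, let $\alpha_n:=\cb(Q_n)$, which is an ordinal because $Q_n\notin\deriv^\infty(\mmax)$. Then $Q_n$ is isolated in $\deriv^{\alpha_n}(\mmax)$.

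Now $\mathcal{N}$ is countable and, being disjoint from the perfect hull, contains no perfect subset. The idea is to choose clopen sets $A_n\ni Q_n$ with two properties: $A_n\cap\mathcal{N}=\{Q_n\}$, and $A_n\cap\deriv^{\alpha_n}(\mmax)=\{Q_n\}$.

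Finding such an $A_n$ is the main obstacle. The clopen sets separating $Q_n$ from a single other point are easy to get, but $\mathcal{N}$ may accumulate at $Q_n$. If $Q_n$ is a limit of points $Q_m\in\mathcal{N}$, no clopen set isolates $Q_n$ from $\mathcal{N}$.

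To get around this, I would proceed in two ways.
\begin{itemize}
\item Make the sets disjoint by setting $A_n':=A_n\setminus\bigcup_{m<n}A_m$. Each $A_n'$ is still clopen, and $A_n'$ contains $Q_n$ unless $Q_n$ was already absorbed into an earlier set.
\item Induct on the Cantor--Bendixson rank of $\mathcal{N}$ as a subspace. This rank is an ordinal because $\mathcal{N}$ is scattered: a countable space disjoint from $\deriv^\infty(\mmax)$ contains no perfect subset. At each level, peel off the primes of $\mathcal{N}$ that are isolated in the remaining part of $\mathcal{N}$.
\end{itemize}

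\textbf{Step 2: the pieces $\Theta(A_n)$.} If $A_n\cap\mathcal{N}=\{Q_n\}$, then $\Theta(A_n)$ is a one-dimensional dd-domain whose maximal space is homeomorphic to $A_n$. Its unique non-DVR localization is at $Q_n$, and its residue group at $Q_n$ has a $\insQ$-basis. Moreover, $\Max(\Theta(A_n))\cong A_n$ is Fr\'echet--Urysohn, and $Q_n$ is not in its perfect hull, because derived sets of an open subspace are the traces of derived sets of the whole space. Hence $\Inv(\Theta(A_n))$ is free by Theorem \ref{teor:invfree-1ptolim}. If instead $A_n$ misses $\mathcal{N}$, then $\Theta(A_n)$ is almost Dedekind and its $\Inv$ is free.

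\textbf{Step 3: assembling.} Let $A:=\bigcup_nA_n$. By Proposition \ref{prop:unione-numerabile}, the kernel of $\Inv(D)\to\Inv(\Theta(\mmax\setminus A))$ is free. The complement $\mmax\setminus A$ contains no element of $\mathcal{N}$, so $\Theta(\mmax\setminus A)$ is almost Dedekind and its $\Inv$ is free by \cite[Proposition 5.3]{bounded-almded}.

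I also need the extension map to be surjective, or else replace the target by its image. The image is a subgroup of a free group, hence free, so surjectivity is not actually needed. The resulting sequence then splits, and $\Inv(D)$ is free.

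When $\mathcal{N}$ has accumulation points inside $\mathcal{N}$, I would run the whole argument transfinitely along the CB rank of $\mathcal{N}$. At each stage, Proposition \ref{prop:unione-numerabile} handles the currently isolated layer, and the projectivity of free groups glues the stages together: the kernels at successive stages form a smooth chain with free quotients, so Theorem \ref{teor:free-chain} applies.
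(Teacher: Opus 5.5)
There is a genuine gap, and it sits exactly at the obstacle you flag. Your Steps~2--3 work only when $\mathcal{N}$ is discrete as a subspace. In that case each $Q_n$ has a clopen $A_n$ with $A_n\cap\mathcal{N}=\{Q_n\}$, $\Theta(A_n)$ is a dd-domain because $A_n$ is open in $\mmax$, and $\Theta(\mmax\setminus A)$ is almost Dedekind.

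Your two workarounds do not reach beyond that case. The disjointification $A_n'=A_n\setminus\bigcup_{m<n}A_m$ does nothing for a point of $\mathcal{N}$ that is a limit of other points of $\mathcal{N}$. The transfinite peeling breaks down after the first layer. From then on you work in $\Theta(R)$ with $R=\mmax\setminus A$ \emph{closed}, and nothing ensures that discrete primes are still dense in $R$. So nothing guarantees that $\Theta(R)$, or $\Theta$ of a relatively clopen piece of $R$, is a dd-domain, which is what Theorem~\ref{teor:invfree-1ptolim} requires. If a point of $\mathcal{N}$ becomes isolated in the residual set, its whole value group appears as a successive quotient of your chain.

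For instance, suppose $\mmax\cong\omega^2+1$, $\mathcal{N}=\{\omega k\mid k\geq 1\}\cup\{\omega^2\}$, and all these value groups are $\insQ$; the hypotheses of the theorem hold. The natural first-layer choice $A_1=[0,\omega]$, $A_k=[\omega(k-1)+1,\omega k]$ leaves $R=\{\omega^2\}$. The next quotient is then $\Inv(\Theta(R))\cong\insQ$, which is not free, so Theorem~\ref{teor:free-chain} cannot be applied.

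The missing idea is this. A point $M\in\mathcal{N}$ that cannot be separated from the rest of $\mathcal{N}$ must be left in the residual closed set. The clopen pieces covering the other points of $\mathcal{N}$ must then avoid a set $C\subseteq\mmax\setminus\mathcal{N}$ of discrete primes with $\overline{C}\cap\mathcal{N}=\{M\}$, so that $M$ stays non-isolated. This is where the Fr\'echet--Urysohn hypothesis is genuinely used; your proposal only passes it on to Theorem~\ref{teor:invfree-1ptolim}.

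The paper organizes this top-down. It inducts on the largest Cantor--Bendixson rank $\alpha$, computed in $\mmax$, of a point of $\mathcal{N}$. Each lower-rank $M_n$ gets a clopen $A_n$ of $\mmax$ in which it is the only point of $\mathcal{N}$ of rank $\cb(M_n)$. Since $A_n$ is open, $\Theta(A_n)$ is again a dd-domain, and $\Inv(\Theta(A_n))$ is free by induction.

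In the successor case, the finitely many points of rank $\alpha$ are first separated from one another. For a single such point $M_0$, the sets $A_n$ ($n\geq 1$) are chosen disjoint from $C$. Then $\Theta(\mmax\setminus A)$ has $M_0$ as its unique non-DVR maximal ideal, $M_0$ is not isolated there, Theorem~\ref{teor:invfree-1ptolim} applies, and Proposition~\ref{prop:unione-numerabile} concludes.

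Your bottom-up scheme might be repairable by choosing, at every stage, pieces that avoid convergent sequences of discrete primes toward all points of later layers. That control is the heart of the proof, and it is absent from your proposal.
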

\begin{proof}
Let $\mathcal{N}:=\{M_0,M_1,\ldots,M_n,\ldots\}$. Note that $\mathcal{N}$ is scattered since it does not meet $\deriv^\infty(\mmax)$. We proceed by induction on $\alpha=\max\{\beta\mid \mathcal{N}\cap\deriv^\beta(\mmax)\neq\emptyset\}$. We note that $\alpha\neq 0$ since $\mmax\setminus\mathcal{N}$ is dense and thus no point of $\mathcal{N}$ is isolated in $\mmax$.

If $\alpha=1$, then all elements of $\mathcal{N}$ have Cantor-Bendixson rank $1$; thus $\mathcal{N}$ is discrete. Hence for every $n$ we can find a clopen subset $A_n$ such that $A_n\cap\mathcal{N}=\{M_n\}$. By Theorem \ref{teor:invfree-1ptolim}, every $\Inv(\Theta(A_n))$ is free; by Proposition \ref{prop:unione-numerabile}, it follows that the kernel of $\Inv(D)\longrightarrow\Inv(\Theta(\mmax\setminus A))$ is free, where $A:=\bigcup_nA_n$. The ring $\Inv(\Theta(\mmax\setminus A))$ is an almost Dedekind domain, since $D_P$ is a DVR for every $P\in\mmax\setminus X\subseteq\mmax\setminus A$; hence $\Inv(\Theta(\mmax\setminus A))$ is free. It follows that $\Inv(D)$ is free.

Suppose that the claim is true for all ordinal numbers $\beta<\alpha$. If $\alpha$ is a limit ordinal, then the Cantor-Bendixson rank of every $M\in\mathcal{N}$ is strictly smaller than $\alpha$; it follows that for every $M_n$ there is a clopen subset $A_n$ of $\mmax$ such that $M_n$ is the unique element of $A_n\cap\mathcal{N}$ whose Cantor-Bendixson rank is equal to $\cb(M_n)$. Consider $\Theta(A_n)$: then, its maximal ideals $P$ such that $(A_n)_P$  is not a DVR are extensions of the maximal ideals in $\mathcal{N}\cap A_n$. In particular, the Cantor-Bendixson rank of these elements is $\leq\cb(M_n)<\alpha$, and thus $\Inv(\Theta(A_n))$ is free by inductive hypothesis. The claim now follows as in the case $\alpha=0$.

Suppose now that $\alpha$ is a successor ordinal, and let $Y_\alpha:=\{P\in\mathcal{N}\mid \cb(P)=\alpha\}$. Suppose first that $Y_\alpha$ is a singleton; without loss of generality, say $Y_\alpha=\{M_0\}$. Since $\mmax$ is Fr\'echet-Urysohn we can find a nonempty subset $C\subseteq\mmax\setminus\mathcal{N}$ such that $\overline{C}\cap\mathcal{N}=\{M_0\}$. For every $n\geq 1$, let $A_n$ be a clopen subset of $\mmax$ such that $A_n\cap C=\emptyset$ and $M_n$ is the unique element of $A_n\cap\mathcal{N}$ whose Cantor-Bendixson rank is $\cb(M_n)$. Then, as above,  $\mathcal{N}_n:=\{P\in\Max(\Theta(A_n))\mid D_P$ is not a DVR$\}$ contains only elements whose Cantor-Bendixson rank (in $\Max(\Theta(A_n))$ is at most $\cb(M_n)<\alpha$; by inductive hypothesis, $\Inv(\Theta(A_n))$ is free. Let $A:=\bigcup_nA_n$: by Proposition \ref{prop:unione-numerabile}, the kernel of the extension map   $\Inv(D)\longrightarrow\Inv(\Theta(\mmax\setminus A))$ is free. In $\Theta(\mmax\setminus A)$, there is a unique prime ideal $P$ such that $\Omega(A)_P$ is not a DVR, namely $M_0\Theta(\mmax\setminus A)$; furthermore, this prime is not an isolated point of $\mmax$ since $C\cap A=\emptyset$ and $M_0$ is in the closure of $C$. By Theorem \ref{teor:invfree-1ptolim}, $\Inv(\Theta(\mmax\setminus A))$ is free; thus $\Inv(D)$ is free too.

Suppose that $Y_\alpha$ is arbitrary. The set $Y_\alpha$ is discrete and thus finite, say $Y_\alpha=\{N_1,\ldots,N_k\}$; thus we can find clopen subsets $A_n$ of $\mmax$ such that $A_n\cap Y_\alpha=\{N_n\}$ for every $1\leq n\leq k$. The proof now follows as in the case $\alpha=0$, using the case in which $Y_\alpha$ is a singleton. 
\end{proof}

We transfer back the previous theorem to the realm of $\ell$-groups.
\begin{teor}\label{teor:grp-countptilim}
Let $G\subseteq\funct(X,\insZ)$ be a one-dimensional $\ell$-group, and let $\mathcal{N}:=\{P\in\Specast(G)\mid G/P$ is not isomorphic to $\insZ\}$. Suppose that:
\begin{itemize}
\item $\Specast(G)$ is Fr\'echet-Urysohn;
\item $\mathcal{N}$ is countable;
\item $\mathcal{N}\cap\deriv^\infty(\Specast(G))=\emptyset$;
\item for every $P\in \mathcal{N}$, the residue group $G/P$ has a $\insQ$-basis.
\end{itemize}
Then, $G$ is free.
\end{teor}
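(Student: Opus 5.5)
The plan is to transfer the statement to ring theory and apply Theorem \ref{teor:invfree-countptilim}, in the same way Theorem \ref{teor:invfree-1ptolim} was obtained from Theorem \ref{teor:basic}. First, by Section \ref{sect:prufer-ell}, we choose a B\'ezout domain $D$ with an $\ell$-group isomorphism $\Inv(D)\simeq G$. By Theorem \ref{teor:dd-XZ}\ref{teor:dd-XZ:->dd}, $D$ is a dd-domain. Since $\dim(D)=\dim(\Inv(D))=\dim(G)=1$, $D$ is a one-dimensional dd-domain. Freeness is an abstract group property, so it suffices to prove that $\Inv(D)$ is free.

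Next we check the four hypotheses of Theorem \ref{teor:invfree-countptilim}, using the homeomorphism $\Psi\colon\Spec(D)\longrightarrow\Spec(\Inv(D))\simeq\Spec(G)$ from Section \ref{sect:topologies}, with the inverse topology on the domain side and the hull-kernel topology on the group side.

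\begin{enumerate}[(a)]
\item Since $D$ is one-dimensional, $\Psi$ restricts to a homeomorphism $\Max(D)\longrightarrow\Specast(G)$. Being Fr\'echet-Urysohn, being countable, and the derived sets $\deriv^\alpha$ (hence also $\deriv^\infty$) are all topological notions, so they are preserved by this homeomorphism.
\item By Section \ref{sect:prufer-ell}, for $M\in\Max(D)$ the value group of $D_M$ is isomorphic to the residue group $\Inv(D)/H_M\simeq G/\Psi(M)$.
\item A valuation domain is a DVR exactly when its value group is isomorphic to $\insZ$: it is one-dimensional, and for rank one, Noetherian is equivalent to discrete value group. Hence $D_M$ fails to be a DVR if and only if $G/\Psi(M)\not\simeq\insZ$, so $\Psi$ maps the set $\mathcal{N}$ of Theorem \ref{teor:invfree-countptilim} bijectively onto the set $\mathcal{N}$ of the present statement.
\end{enumerate}

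With these identifications, $\mathcal{N}$ is countable, $\mathcal{N}\cap\deriv^\infty(\Max(D))=\emptyset$, and for each $M\in\mathcal{N}$ the value group of $D_M$ has a $\insQ$-basis. The last point needs a small remark. A $\insQ$-basis is defined for subgroups of $\insR$. The value group of a one-dimensional valuation domain embeds in $\insR$ as an ordered group, and any two such embeddings of a rank-one group differ by multiplication by a positive real. Such a rescaling carries a $\insQ$-basis to a $\insQ$-basis, so the property is well defined up to isomorphism and transfers along the isomorphism $G/\Psi(M)\simeq$ value group of $D_M$.

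Theorem \ref{teor:invfree-countptilim} then gives that $\Inv(D)$ is free, and therefore $G$ is free. I expect no serious obstacle. The only step needing care is identifying the non-DVR primes with the primes whose residue group is not $\insZ$, together with the well-definedness of the $\insQ$-basis condition just discussed. Note that we never need $G$ to be reduced or to coincide with $\Phi(\Inv(D))$ for a particular discrete core, because everything is phrased in terms of the abstract spectrum and residue groups.
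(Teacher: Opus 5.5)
Your proposal is correct and follows the paper's route: the paper gives no separate argument, only the remark that this theorem is the transfer of Theorem~\ref{teor:invfree-countptilim} back to $\ell$-groups through a B\'ezout domain $D$ with $\Inv(D)\simeq G$. Your checks fill in exactly what the paper leaves implicit: the dd-property via Theorem~\ref{teor:dd-XZ}\ref{teor:dd-XZ:->dd}, the homeomorphism $\Max(D)\simeq\Specast(G)$, the matching of non-DVR primes with primes whose residue group is not $\insZ$, and the invariance of the $\insQ$-basis condition.
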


\bibliographystyle{plain}
\bibliography{/bib/articoli,/bib/libri,/bib/miei}

\begin{thebibliography}{10}

\bibitem{anderson-feil}
Marlow Anderson and Todd Feil.
\newblock {\em Lattice-ordered groups}.
\newblock Reidel Texts in the Mathematical Sciences. D. Reidel Publishing Co.,
  Dordrecht, 1988.
\newblock An introduction.

\bibitem{baer-dirprodZ}
Reinhold Baer.
\newblock Abelian groups without elements of finite order.
\newblock {\em Duke Math. J.}, 3(1):68--122, 1937.

\bibitem{brewer-klinger}
James Brewer and Lee Klingler.
\newblock The ordered group of invertible ideals of a {P}r\"ufer domain of
  finite character.
\newblock {\em Comm. Algebra}, 33(11):4197--4203, 2005.

\bibitem{chabert-fatou}
Jean-Luc Chabert.
\newblock Anneaux de ``polyn\^omes \`a{} valeurs enti\`eres'' et anneaux de
  {F}atou.
\newblock {\em Bull. Soc. Math. France}, 99:273--283, 1971.

\bibitem{darnel-lgroups}
Michael~R. Darnel.
\newblock {\em Theory of lattice-ordered groups}, volume 187 of {\em Monographs
  and Textbooks in Pure and Applied Mathematics}.
\newblock Marcel Dekker, Inc., New York, 1995.

\bibitem{spectralspaces-libro}
Max Dickmann, Niels Schwartz, and Marcus Tressl.
\newblock {\em Spectral spaces}, volume~35 of {\em New Mathematical
  Monographs}.
\newblock Cambridge University Press, Cambridge, 2019.

\bibitem{dobbs_fedder_fontana}
David~E. Dobbs, Richard Fedder, and Marco Fontana.
\newblock Abstract {R}iemann surfaces of integral domains and spectral spaces.
\newblock {\em Ann. Mat. Pura Appl. (4)}, 148:101--115, 1987.

\bibitem{fuchs-abeliangroups}
L\'{a}szl\'{o} Fuchs.
\newblock {\em Abelian groups}.
\newblock Springer Monographs in Mathematics. Springer, Cham, 2015.

\bibitem{fuchs-salce}
L\'aszl\'o{} Fuchs and Luigi Salce.
\newblock {\em Modules over non-{N}oetherian domains}, volume~84 of {\em
  Mathematical Surveys and Monographs}.
\newblock American Mathematical Society, Providence, RI, 2001.

\bibitem{gilmer}
Robert Gilmer.
\newblock {\em Multiplicative {I}deal {T}heory}.
\newblock Marcel Dekker Inc., New York, 1972.
\newblock Pure and Applied Mathematics, No. 12.

\bibitem{henriksen_prime}
Melvin Henriksen.
\newblock On the prime ideals of the ring of entire functions.
\newblock {\em Pacific J. Math.}, 3:711--720, 1953.

\bibitem{HK-Olb-Re}
Olivier~A. Heubo-Kwegna, Bruce Olberding, and Andreas Reinhart.
\newblock Group-theoretic and topological invariants of completely integrally
  closed {P}r\"{u}fer domains.
\newblock {\em J. Pure Appl. Algebra}, 220(12):3927--3947, 2016.

\bibitem{olberding-factoring-SP}
Bruce Olberding.
\newblock Factorization into radical ideals.
\newblock In {\em Arithmetical properties of commutative rings and monoids},
  volume 241 of {\em Lect. Notes Pure Appl. Math.}, pages 363--377. Chapman \&
  Hall/CRC, Boca Raton, FL, 2005.

\bibitem{olberding_affineschemes}
Bruce Olberding.
\newblock Affine schemes and topological closures in the {Z}ariski-{R}iemann
  space of valuation rings.
\newblock {\em J. Pure Appl. Algebra}, 219(5):1720--1741, 2015.

\bibitem{specker}
Ernst Specker.
\newblock Additive {G}ruppen von {F}olgen ganzer {Z}ahlen.
\newblock {\em Portugal. Math.}, 9:131--140, 1950.

\bibitem{overring-operators}
Dario Spirito.
\newblock Smooth chains and overring operators.
\newblock {\em submitted}, arXiv:2511.15319.

\bibitem{SP-scattered}
Dario Spirito.
\newblock Almost {D}edekind domains without radical factorization.
\newblock {\em Forum Math.}, 35(2):363--382, 2023.

\bibitem{InvXD}
Dario Spirito.
\newblock Radical factorization in higher dimension.
\newblock {\em J. Pure Appl. Algebra}, 229(11):108111, 2025.

\bibitem{bounded-almded}
Dario Spirito.
\newblock Boundness in almost {D}edekind domains.
\newblock {\em J. Algebra Appl.}, to appear.

\end{thebibliography}
\end{document}